%% file: main.tex
\documentclass{article}

\input{packages}
\input{commands}

\AtBeginDocument{%
  \crefname{thmt@dummyctr}{setting}{settings}%
  \Crefname{thmt@dummyctr}{Setting}{Settings}%
}

\title{The critical exponent of the Falconer functional for Anosov representations}
\author{Giorgos Stamatiou}
\date{\today}

\begin{document}

\maketitle
\begin{abstract}
    We study the critical exponent of the Falconer functional for projective Anosov representations with Lipschitz limit sets and its relationship to the Hausdorff dimension of the limit set.
    Extending a result of Pozzetti, Sambarino, and Wienhard \cite{pozzetti_anosov_2023}, we show, under density and irreducibility assumptions on the representation, that this exponent equals the Hausdorff dimension of the limit set.
    We also show that these assumptions are necessary for the strategy used in \cite{pozzetti_anosov_2023} and in the present paper. 
    To this end, we study the action of $\mathrm{SO}(p,p)$ on the space of maximal isotropic subspaces of $\mathbb R^{p,p}$ and provide examples of $\mathbf H^{p,q}$-convex-cocompact representations of lattices with virtual cohomological dimension $p$.
    Besides yielding counterexamples to seemingly innocent claims about irreducible representations in $\mathrm{SO}(p,q+1)$, these examples are of independent interest: they admit equivariant spacelike embeddings of Riemannian symmetric spaces into $\mathbf H^{p,q}$ on which uniform lattices act cocompactly.
\end{abstract}
\tableofcontents

\section{Introduction}
\subsection{Sullivan's theorem}
Let $\mathbf H_{\mathbb R}^n$ denote the real hyperbolic space of dimension $n$ and $\Gamma \leq \SO(1, n)$ be a convex-cocompact subgroup of isometries.
To the action $\Gamma \curvearrowright \mathbf H_{\mathbb R}^n$, we associate its critical exponent $\delta_\Gamma$, which is a dynamical invariant that expresses the exponential growth rate of the $\Gamma$-orbits in $\mathbf H_{\mathbb R}^n$.
Another geometric object associated with the action is the limit set $\Lambda_\Gamma$, consisting of the accumulation points of an orbit at infinity.
In \cite{sullivan1979density}, Sullivan proved that the critical exponent $\delta_\Gamma$ is equal to the Hausdorff dimension of the limit set $\Lambda_\Gamma$, establishing a stunning connection between the fractal geometry of the limit set and the dynamics of the group action:
\begin{theorem}[\cite{sullivan1979density}]\label{thm:sullivan}
    Let $\Gamma \leq \PSO(1,n)$ be convex-cocompact.
    Then the critical exponent of $\Gamma$ is equal to the Hausdorff dimension of its limit set:
    \[
    \dim_{\mathcal H}(\Lambda_\Gamma) = \delta_\Gamma.
    \]
\end{theorem}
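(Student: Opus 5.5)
We prove the two inequalities separately, using the Patterson--Sullivan measure as the main tool. First we construct a $\delta_\Gamma$-conformal density $(\mu_x)_{x \in \mathbf H^n_{\mathbb R}}$ supported on $\Lambda_\Gamma$. Fix a basepoint $o$. Consider the Poincaré series $\sum_{\gamma} e^{-s d(o,\gamma o)}$, inserting a Patterson slowly-varying weight if the series converges at $s=\delta_\Gamma$. For $s \downarrow \delta_\Gamma$, normalize the weighted sums of Dirac masses and take a weak-$*$ limit. The limit $\mu_o$ is supported on $\Lambda_\Gamma$ and satisfies
\[
\frac{d\gamma_*\mu_o}{d\mu_o}(\xi) = e^{-\delta_\Gamma \beta_\xi(\gamma^{-1} o, o)},
\]
where $\beta$ is the Busemann cocycle. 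This construction is routine.

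\textbf{Shadow lemma.} Convex-cocompactness enters here. For $R$ large and every $\gamma \in \Gamma$, let $\mathcal O_R(o,\gamma o) \subset \partial \mathbf H^n_{\mathbb R}$ be the shadow of the ball $B(\gamma o, R)$ seen from $o$. We show
\[
\mu_o\bigl(\mathcal O_R(o,\gamma o)\bigr) \asymp e^{-\delta_\Gamma d(o,\gamma o)}.
\]
The upper bound follows from conformality, since the Busemann function is $\approx d(o,\gamma o)$ on the shadow. The lower bound needs $\mu_o$ to give uniformly positive mass to the complement of small balls around any point, which holds because $\Lambda_\Gamma$ is not a single point and $\Gamma$ is non-elementary.

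\textbf{Comparing shadows with round balls.} In the ball model with the spherical metric, a shadow of $B(\gamma o,R)$ is comparable to a round ball of radius $\asymp e^{-d(o,\gamma o)}$. Convex-cocompactness gives the converse: every geodesic ray from $o$ to a point $\xi \in \Lambda_\Gamma$ stays within bounded distance $D$ of the orbit $\Gamma o$, because it lies in the convex hull and the hull mod $\Gamma$ is compact. Hence for every $\xi \in \Lambda_\Gamma$ and every small $r$, there is $\gamma$ with $d(o,\gamma o) = \log(1/r) + O(1)$ such that $B(\xi, r)$ is comparable to $\mathcal O_R(o,\gamma o)$. Combining this with the shadow lemma gives
\[
\mu_o(B(\xi,r)) \asymp r^{\delta_\Gamma}
\]
uniformly for $\xi \in \Lambda_\Gamma$, so $\mu_o$ is Ahlfors $\delta_\Gamma$-regular on $\Lambda_\Gamma$.

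\textbf{Conclusion.} By the mass distribution principle, $\mu_o(B) \lesssim r^{\delta_\Gamma}$ gives $\dim_{\mathcal H}\Lambda_\Gamma \ge \delta_\Gamma$. Conversely, cover $\Lambda_\Gamma$ by a maximal $r$-separated family of balls. The balls of radius $r/2$ around these points are disjoint and each has mass $\gtrsim r^{\delta_\Gamma}$, so there are $\lesssim r^{-\delta_\Gamma}$ of them. This gives $\mathcal H^{\delta_\Gamma}(\Lambda_\Gamma) < \infty$, hence $\dim_{\mathcal H}\Lambda_\Gamma \le \delta_\Gamma$. Alternatively, the upper bound can be obtained directly from the covers by shadows using convergence of the Poincaré series at exponents $s > \delta_\Gamma$.

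\textbf{Main obstacle.} The step I expect to be hardest is the lower bound in the shadow lemma together with the uniform comparison of balls and shadows. Both rely on the bounded-distance property of rays to limit points, and it is exactly there that convex-cocompactness is essential.
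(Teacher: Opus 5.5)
The paper gives no proof of this statement: it is quoted from Sullivan, and the paper only recalls that his method rests on a conformal family of measures on the limit set. Your outline (Patterson construction, shadow lemma, Ahlfors $\delta_\Gamma$-regularity of $\mu_o$ because rays to limit points track $\Gamma o$, then the mass distribution principle and a packing count) is exactly that classical argument, and it is correct.

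Two details remain to be added. First, your shadow-lemma lower bound assumes $\Gamma$ is non-elementary, so the elementary case must be handled separately; it is trivial, since then $\Gamma$ is finite or virtually cyclic loxodromic, $\Lambda_\Gamma$ has at most two points, and $\delta_\Gamma = 0$. Second, choose $o$ in the convex hull $C_\Gamma$ of $\Lambda_\Gamma$, or note that $[o,\xi)$ stays within $d(o,C_\Gamma)$ of a ray lying in $C_\Gamma$; this is what guarantees that rays from $o$ to limit points remain within bounded distance of $\Gamma o$.
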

Since then, this result has inspired many generalizations. 
For a convex-cocompact action of a discrete group $\Gamma$ on a Gromov hyperbolic space, Coornaert showed in \cite{coornaert1993mesures} that Sullivan's result still holds, when the Hausdorff dimension is computed with respect to Gromov's quasi-distance on the boundary of the space.

Another direction of generalization, which we also follow is to consider actions of Anosov representations of Gromov hyperbolic groups into higher rank Lie groups.
These were introduced by Labourie in \cite{labourie_anosov_2006} and are widely regarded as the natural generalization of convex-cocompact representations to higher rank.
They stand out because of the way they reflect the dynamics and the geometry of the represented hyperbolic group onto the target Lie group and its associated flag variety.
More concretely, if $\rho: \Gamma \to \SL(d, \mathbb R)$ is a projective Anosov representation, then it admits a continuous $\rho$-equivariant embedding $(\xi^1, \xi^{d-1}): \partial_\infty \Gamma \to \mathbb P(\mathbb R^d) \times \mathbb P((\mathbb R^d)^*)$ of the Gromov boundary $\partial_\infty \Gamma$ of $\Gamma$ (see \cref{sec:anosov_definitions} for more details).
In this context, the role of the limit set is usually played by the image $\xi^1(\partial_\infty \Gamma)$ of the boundary map, which consists of the attracting points of the action of $\rho(\Gamma)$ on $\mathbb P(\mathbb R^d)$. A notable exception to this is \cite{glorieux_hausdorff_2023}, where the authors study the dimension of the symmetric limit set $(\xi^1, \xi^{d-1})(\partial_\infty \Gamma)$ instead.

In \cite{dey2022patterson}, Dey and Kapovich define a Finsler metric on the symmetric space associated with $\SL(d, \mathbb R)$, and show that the critical exponent of $\rho(\Gamma)$ with respect to this metric is equal to the Hausdorff dimension of the limit set $\xi^1(\partial_\infty \Gamma)$, computed with respect to a Gromov quasi-distance.\footnote{In fact their result holds for Anosov representations into semisimple Lie groups, but we only state it for projective Anosov representations into $\SL(d, \mathbb R)$ for ease of comparison with other results presented here.}

Sullivan's result has even seen a generalization to the pseudo-Riemannian setting by Glorieux and Monclair in \cite{glorieux2021critical}.
There, the authors define pseudo-Riemannian analogs of the critical exponent and the Hausdorff dimension and prove that they are equal for certain convex-cocompact actions on pseudo-Riemannian hyperbolic spaces (defined in \cref{sec:pseudo_riemannian_hyperbolic_spaces}).
Unlike \cite{dey2022patterson,glorieux2021critical}, here we follow \cite{glorieux_hausdorff_2023} and study the Hausdorff dimension with respect to the natural smooth structure of the flag variety where the limit set lies.
The same does Zhufeng Yao in \cite{yao2026hausdorff}, where he proves that projective Anosov subgroups of $\mathrm{PGL}(d, \mathbb R)$ with full Hausdorff dimension are uniform lattices in
$\mathrm{PGL}(2,\mathbb R)$ and that the Hausdorff dimension of the limit set equals the critical exponent of the first simple root for \emph{projective non-folding Anosov subgroups}, which are defined in the same paper.

\subsection{Generalizations involving the Falconer functional}
Motivated by the interpretation of the Cartan projection $\mu: \SL(d, \mathbb R) \to \mathfrak a^+$ as a vector-valued distance function on the symmetric space of $\SL(d, \mathbb R)$, one may give a new definition of the critical exponent as the convergence radius of a Poincaré series.
Using the Falconer functional $F_s: \mathfrak a^+ \to \mathbb R$, which was introduced by Ledrappier and Lessa in \cite{ledrappier_dimension_2023} and whose definition is recalled in \cref{sec:falconer_functional}, we define the critical exponent of a projective Anosov representation $\rho: \Gamma \to \SL(d, \mathbb R)$ as
\[
\delta_F(\rho) = \inf \left\{ s > 0 : \sum_{\gamma \in \Gamma} e^{-F_s(\mu(\rho(\gamma)))} < \infty \right\}.
\]
While not evident at first sight, this definition carries a geometric meaning, as the Poincaré series above is, up to a constant, equal to the Hausdorff measure of a covering for the limit set by ellipsoids.
In this way, the fact that the Hausdorff dimension of the limit set is less than or equal to the critical exponent, follows from the dynamical properties that the representation inherits from the assumption that it is Anosov (see \cite[Section 3]{pozzetti_anosov_2023} or \cite{ledrappier_dimension_2023} for a proof).

This suggests the following possible generalization of \cref{thm:sullivan}, appearing also in \cite{ledrappier_dimension_2023}:
\begin{vagueconjecture}
Let $\rho: \Gamma \to \SL(d, \mathbb R)$ be an irreducible projective Anosov representation of a discrete group $\Gamma$.
Under suitable assumptions, the Hausdorff dimension of the limit set is equal to the critical exponent of the Falconer functional:
\[
\dim_{\mathcal H}(\xi^1(\partial_\infty \Gamma)) = \delta_F (\rho).
\] 
\end{vagueconjecture}
The hypothesis of irreducibility is necessary, as otherwise the representation will factor through a lower-dimensional one, placing the limit set in a proper projective subspace of $\mathbb P(\mathbb R^d)$.
This will keep the Hausdorff dimension low, but may not be detected by the Falconer functional.
For example, let $\Gamma$ be a uniform lattice of $\SL(2,\mathbb R)$ and $\rho: \Gamma \to \SL(3,\mathbb R)$ given by the composition of the inclusion $\Gamma \leq \SL(2,\mathbb R)$ with the inclusion $\SL(2,\mathbb R) \to \SL(3,\mathbb R)$ as the upper-left block.
Then $\rho$ is projective Anosov and its limit set is a copy of $\partial_\infty \mathbb H^2$ inside $\mathbb P(\mathbb R^3)$, so the (Hausdorff) dimension is $1$.
On the other hand, \cref{thm:sullivan} implies that the critical exponent of the Falconer functional is equal to $3/2$.

While Pozzetti, Sambarino and Wienhard show in \cite{pozzetti_anosov_2023} that $\dim_{\mathcal H}(\xi^1(\partial_\infty \Gamma)) \leq \delta_F (\rho)$ holds for every projective Anosov representation, the reverse inequality has only been shown in some particular cases.
Namely under no additional hypotheses, for $d = 3$ by Li, Pan, and Xu in \cite{li2023dimension}, for hyperconvex representations into $\SL(d, \mathbb R)$ in \cite{pozzetti2021conformality}, and in \cite[Theorem A]{pozzetti_anosov_2023} for Zariski-dense representations into $\mathrm{SL}(d,\mathbb R)$ with Lipschitz limit set.

\subsection{Main results}
Our study takes place in the setting of projective Anosov representations with Lipschitz limit set, whose main source of examples are $\mathbf H^{p,q}$-convex-cocompact representations of discrete groups with maximal cohomological dimension (studied in \cite{beyrer2023} and recalled in \cref{sec:convex_cocompact_representations}) and $\Theta$-positive representations (see \cite{guichard2016positivity} and \cite{pozzetti_anosov_2023}).

Our first result is to show that for certain irreducible and Zariski-dense  projective Anosov representations with Lipschitz limit set, the critical exponent of the Falconer functional is equal to the Hausdorff dimension of the limit set, thus confirming the conjecture in this setting.
These assumptions are not automatically satisfied by $\Theta$-positive representations or representations of discrete subgroups with maximal cohomological dimension, but they are fulfilled by two subclasses of the latter: $\mathrm{AdS}$-quasi-Fuchsian representations (see \cref{sec:convex_cocompact_representations} for the definition) and certain deformations of fundamental groups constructed in \cite{beyrer2023}.
\begin{theorem}\label{thm:main}
    Let $\Gamma$ be a discrete group and $\rho: \Gamma \to \SL(d,\mathbb R)$ be an irreducible representation.
    If either of the following assumptions hold:
    \begin{enumerate}
        \item $\rho$ is projective Anosov, $\rho(\Gamma)$ is Zariski-dense in $\SL(d, \mathbb R)$, and the limit set $\xi^1(\partial_\infty \Gamma)$ is a Lipschitz sphere,\label{item:main_desnse_sl}
        \item $\rho$ is projective Anosov, $\rho(\Gamma)$ is Zariski-dense in $\SO(p,q)$ with $p \neq q$, and the limit set $\xi^1(\partial_\infty \Gamma)$ is a Lipschitz sphere,\label{item:main_desnse_so}
        \item $\rho(\Gamma) \subseteq \SO(d-2,2)$ is AdS-quasi-Fuchsian,\label{item:main_ads_quasi_fuchsian}
    \end{enumerate}
    then the dimension of the limit set is equal to the critical exponent of the Falconer functional.
\end{theorem}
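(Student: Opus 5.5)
The plan is to follow the strategy of \cite[Theorem A]{pozzetti_anosov_2023}. The inequality $\dim_{\mathcal H}(\xi^1(\partial_\infty \Gamma)) \leq \delta_F(\rho)$ holds for every projective Anosov representation, so only the reverse inequality needs proof. Since the limit set is a Lipschitz sphere of dimension $k := \dim \partial_\infty\Gamma$, its Hausdorff dimension is exactly $k$. It therefore suffices to show $\delta_F(\rho) \le k$, that is, that the Poincaré series $\sum_\gamma e^{-F_s(\mu(\rho(\gamma)))}$ converges for every $s>k$.

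Write $\sigma_1 \ge \sigma_2 \ge \dots$ for the singular values of $\rho(\gamma)$. For $k\le s \le k+1$ the Falconer functional has the form $F_s(\mu) = \sum_{i=1}^{k}(\mu_1-\mu_{i+1}) + (s-k)(\mu_1-\mu_{k+2})$. I would compare this with the critical exponent of the root-type functional $\mu_1-\mu_{k+1}$, or more precisely of the $k$-th \emph{sum} of simple-root gaps. The key input is the Lipschitz property. Because $\xi^1$ is a Lipschitz graph, its tangent directions, where they exist, lie in a $k$-plane transverse to $\xi^{d-1}$. Using equivariance, this lets one compare the contraction $\sigma_{i+1}/\sigma_1$ along $\xi^1$ with the volume contraction on the sphere. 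Concretely, I would prove that $\sum_{i=1}^k(\mu_1-\mu_{i+1})(\rho(\gamma))$ has critical exponent exactly $1$. To do this, cover the Lipschitz sphere by images of a fixed ball under the $\rho(\gamma)$ (shadows). Their $k$-dimensional Lebesgue measures are comparable to $e^{-\sum_{i\le k}(\mu_1-\mu_{i+1})}$, and the shadows have bounded multiplicity at each word-length scale, which bounds the growth of the sum. This part should need only the Anosov property and the Lipschitz hypothesis.

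The remaining task, and the main obstacle, is to gain a strict exponential improvement from the extra term $(s-k)(\mu_1-\mu_{k+2})$: one needs $\mu_1-\mu_{k+2}$ to grow at least linearly along "most" of $\Gamma$, and not merely be nonnegative. This is where irreducibility and Zariski density enter. I would invoke Benoist's theorem that the limit cone of a Zariski-dense subgroup has nonempty interior in the Weyl chamber of the ambient group, together with the strict concavity and analyticity of the relevant pressure or entropy functions (Quint, Sambarino). These show that the Patterson–Sullivan/equilibrium measure for the functional $\sum_{i\le k}(\mu_1-\mu_{i+1})$ gives $\mu_1-\mu_{k+2}$ a positive average. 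Strict convexity of the pressure $P(-F_s)$ in $s$ then gives $P(-F_s)<0$ for $s>k$, which is exactly convergence.

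In case \ref{item:main_desnse_sl} the ambient Weyl chamber is that of $\SL(d,\mathbb R)$. In case \ref{item:main_desnse_so} I must check that, inside the Weyl chamber of $\SO(p,q)$ with $p\neq q$, the functional $\mu_1-\mu_{k+2}$ is not identically zero on the limit cone. The symmetry $\mu_i=-\mu_{d+1-i}$ could force degeneracies, and the hypothesis $p\ne q$ is what excludes them; I expect to verify this by an explicit computation of the roots. For case \ref{item:main_ads_quasi_fuchsian}, $\rho$ is not Zariski dense, and $\rho(\Gamma)$ may even be reducible a priori. Here $k=d-2$ and the Weyl chamber of $\SO(d-2,2)$ has rank $2$, with $\mu=(\mu_1,\mu_2,0,\dots,0,-\mu_2,-\mu_1)$. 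The needed positivity of $\mu_1-\mu_{k+2}=\mu_1+\mu_2$ is then automatic unless $\mu_1$ is sublinear, which the Anosov property excludes. So in this case a direct estimate should replace the density argument, with irreducibility used only to ensure the limit set is not contained in a proper subspace.
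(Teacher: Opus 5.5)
Your skeleton is the paper's: Lebesgue measure on the Lipschitz sphere, shadows of mass $\asymp e^{-J^u_k(\mu(\rho(\gamma)))}$, and bounded multiplicity on each sphere $\Gamma_n$, implemented via the tangent-plane section $\zeta$ and \cref{lem:measure_estimation}. The gap is in the step you say needs \emph{only} the Anosov property and the Lipschitz hypothesis, namely the lower bound on the shadow measures. The derivative of $\rho(\gamma)$ contracts a tangent $k$-plane of $\Lambda=\xi^1(\partial_\infty\Gamma)$ by a factor comparable to $e^{-J^u_k}$ only if a condition holds. The $(k+1)$-plane $\zeta^{k+1}(x)$ representing $T_{\xi^1(x)}\Lambda$ must be uniformly transverse to the span of the $d-k-1$ most contracted singular directions of $\rho(\gamma)$. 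Otherwise the shadow is exponentially smaller.

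As you state it, the step is false. Take a uniform lattice $\Gamma<\SL(2,\mathbb R)$ placed as the upper-left block of $\SL(3,\mathbb R)$. Then $\rho$ is projective Anosov with smooth limit circle. But the plane of the circle always contains the most contracted direction, shadows have length $\asymp e^{-\alpha_{1,3}}=e^{-2\alpha_{1,2}}$, and $\delta_{J^u_1}(\rho)=2$. Your argument would give $\delta_F(\rho)=1$, whereas $\delta_F(\rho)=3/2$.

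This transversality is exactly the irreducibility of the Patterson--Sullivan measure $\nu$ on $\mathcal F_{\{1,k+1\}}$ that \cref{lem:measure_estimation} requires, and it is where the paper spends its hypotheses.
Strong irreducibility of $\rho$ does not give it (\cref{prop:pozzetti}). Zariski density in $\SL(d,\mathbb R)$ makes $\supp\nu$ Zariski dense, so it cannot lie in a proper subvariety $\mathrm{Ann}(y)$. For $\SO(p,q)$ with $p\neq q$, \cref{lem:negative_determinant} and \cref{lem:transverse_via_O} show that $\SO(p,q)$ moves any subspace transverse to any subspace of complementary dimension. This fails for $\SO(p,p)$ because maximal isotropic subspaces form two components (\cref{sec:so_p_p_action}). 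In the AdS case, irreducibility rules out the Fuchsian case, so $\rho(\Gamma)$ is Zariski dense in $\SO(d-2,2)$ by \cite{glorieux2018regularity}. This reduces to the previous case, except for $\SO(2,2)$, which is handled in \cite{glorieux_hausdorff_2023}.

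Conversely, what you call the main obstacle is elementary and uses no density. For $s\ge k$ one has $F_s\ge\frac{s}{k}J^u_k$ (\cref{lem:functional_relations}). So $\delta_F(\rho)\le\max\{k,k\,\delta_{J^u_k}(\rho)\}$, and $\delta_{J^u_k}(\rho)\le 1$ already yields $\delta_F(\rho)\le k$. Alternatively, $\alpha_{1,k+2}\ge\alpha_{1,2}\ge c|\gamma|-\log C$ holds for \emph{every} $\gamma$, directly from the definition of projective Anosov. Hence Benoist's limit cone and convexity of pressure play no role. The hypothesis $p\neq q$ has nothing to do with degeneracy of $\mu_1-\mu_{k+2}$ on a limit cone. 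Your direct estimate in case~3 only treats this trivial term and leaves the shadow estimate, the actual difficulty, unproved. (Minor: for AdS-quasi-Fuchsian $\rho(\Gamma)\subseteq\SO(d-2,2)$ the limit set is $\mathbb S^{d-3}$, so $k=d-3$. Your identity $\mu_1-\mu_{k+2}=\mu_1+\mu_2$ holds for that $k$, not for $k=d-2$.)
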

The first case of the theorem is proven in \cite[Theorem A]{pozzetti_anosov_2023}.
There, the authors state it for strongly irreducible representations, but the proof uses two incorrect lemmas (\cite[Lemma 6.8]{pozzetti_anosov_2023} and \cite[Proposition 10.3]{labourie_anosov_2006}) that we disprove in Sections \ref{sec:equivariant_spacelike_embeddings} and \ref{sec:so_p_p_action}.
However, the original proof goes through in the Zariski-dense case, and is presented in \cref{sec:density_in_sl} nonetheless.
The other two cases of \cref{thm:main} are new and follow from the same strategy, combined with a study of the action of $\SO(p,q)$ on the space of maximal isotropic subspaces of $\mathbb R^{p,q}$.

In the case where one only assumes that the representation is strongly irreducible, certain issues may arise in the way that the tangent spaces of the limit set intersect each other.
Our next contribution is to provide examples of representations that exhibit such pathological behaviour.
Namely, that there exist strongly irreducible projective Anosov representations $\rho: \Gamma \to \SO(p,q)$ with $p \neq q$, whose limit set is a Lipschitz sphere, and such that any two tangent spaces of the limit set at different points have non-trivial intersection:

In \cite[Lemma 6.8]{pozzetti_anosov_2023}, the authors consider a strongly irreducible projective Anosov representation $\rho : \Gamma \to \mathrm{PGL}(d, \mathbb R)$ whose limit set $\xi^1(\partial_\infty \Gamma)$ is homeomorphic to a sphere $\mathbb S^{d_\rho}$ and assume that there exists a measurable $\rho$-equivariant section from the limit set to the flag variety $\mathcal F_{\{ \alpha_1, \alpha_{d_\rho + 1}\}} (\mathbb R^d)$ of type $(1, d_\rho + 1)$-flags.
They then claim that for every type $(d-d_\rho - 1, d-1)$-flag, its set of transverse flags will have positive mass with respect to every $\rho(\Gamma)$-Patterson--Sullivan measure supported on the image of the section (see \cref{def:patterson-sullivan}).

While for the most part, the proof is valid, the problem arises when it uses \cite[Proposition 10.3]{labourie_anosov_2006}.
Its statement is that for an algebraic group $G \leq \SL(d, \mathbb R)$ whose identity component acts irreducibly on $\mathbb R^d$, we can always move two subspaces of complementary dimension in transverse position.
More concretely, that there are no pairs of subspaces $C \in \mathcal G_{k}(\mathbb R^d), B \in \mathcal G_{d-k}(\mathbb R^d)$ such that
\begin{equation}\label{eq:intersection_property}
        g C \cap B \neq 0 \text{ for all } g \in G.\tag{IP}
\end{equation}

In Sections \ref{sec:equivariant_spacelike_embeddings} and \ref{sec:so_p_p_action} we provide counterexamples to both lemmas.
In particular, to disprove \cite[Proposition 10.3]{labourie_anosov_2006}, we show that the action of $\SO(d,d)$ on maximal isotropic subspaces cannot always place them in transverse position, although its identity component acts irreducibly on $\mathbb R^{2d}$.
Parametrizing the maximal isotropic subspaces by $\mathrm{O}(d)$, we see that the space of maximal isotropic subspaces has two connected components.
The key detail here is that depending on the parity of $d$, maximal isotropic subspaces in the same or in different components are never transverse.
\begin{restatable*}{proposition}{labourie}\label{ex:labourie}
    Let $S \subseteq \mathcal G_p (\mathbb R^{2p})$ be the set of maximal isotropic subspaces of $\mathbb R^{p,p}$, where $p \geq 2$.
    Then
    \[
        gV \cap W \neq 0 \text{ for all } g \in \SO(p,p)
    \]
    provided that
    \[
    V, W \text{ lie in } 
    \begin{cases}
            \text{ the same component of } S, & \text{ if }p \text{ is odd},\\
            \text{ different components of } S, & \text{ if }p \text{ is even}.
        \end{cases}
    \]
\end{restatable*}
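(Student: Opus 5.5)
Work in a basis where $\mathbb R^{p,p} = \mathbb R^p \oplus \mathbb R^p$ with quadratic form $Q(x,y) = |x|^2 - |y|^2$. Every maximal isotropic subspace is then the graph $V_A = \{(x, Ax) : x \in \mathbb R^p\}$ of a unique $A \in \mathrm{O}(p)$. Indeed, an isotropic subspace meets $\mathbb R^p\oplus 0$ and $0\oplus\mathbb R^p$ trivially, since those summands are definite. So it is a graph of some linear map $A$, and isotropy forces $|Ax| = |x|$. This identifies $S \cong \mathrm{O}(p)$, and its two components correspond to $\det A = \pm 1$.

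The first step is the transversality criterion. Since $V_A \cap V_B = \{(x,Ax) : Ax = Bx\} \cong \ker(A-B)$, we have $V_A \cap V_B \neq 0$ if and only if $B^{-1}A = B^{T}A \in \mathrm{O}(p)$ has eigenvalue $1$. The key classical fact is this: an element $C \in \mathrm{O}(p)$ with $\det C = (-1)^{p+1}$ must have $1$ as an eigenvalue. To see it, note that the non-real eigenvalues come in conjugate pairs, so the real eigenvalues $\pm 1$ occur with total multiplicity $\equiv p \pmod 2$. If $1$ were not an eigenvalue, all real eigenvalues would be $-1$, giving $\det C = (-1)^p$. Now $\det(B^TA) = \det A \det B$. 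For $p$ odd we need $\det(B^TA) = +1$, which holds exactly when $A, B$ lie in the same component. For $p$ even we need $\det(B^TA) = -1$, which holds exactly when they lie in different components. So in the relevant cases $V_A \cap V_B \neq 0$.

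The second step, and the main point, is to show that $\SO(p,p)$ preserves each component of $S$, and not merely $S$ as a whole. Once this is known, $gV$ lies in the same component as $V$, so the pair $(gV, W)$ satisfies the same component hypothesis as $(V,W)$, and the first step gives $gV\cap W\neq 0$ for every $g$.

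To prove that $\SO(p,p)$ preserves the components, I would split into the identity component and the rest.

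\textbf{The identity component.} $\SO_0(p,p)$ is connected and $S$ has the components $\{\det A = \pm1\}$, so $\SO_0(p,p)$ preserves each of them.

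\textbf{The remaining component of $\SO(p,p)$.} $\SO(p,p)$ has two components, and its non-identity component is represented by $g_0 = \mathrm{diag}(R, R)$ with $R = \mathrm{diag}(-1,1,\dots,1)$; this has determinant $1$ and reverses orientation on both definite factors. A block-diagonal isometry $\mathrm{diag}(P,Q)$ sends $V_A$ to $V_{QAP^{-1}}$. For $g_0$ this is $V_{RAR^{-1}}$, and $\det(RAR^{-1}) = \det A$, so $g_0$ preserves the components too.

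By contrast, $\mathrm{diag}(R, I)$ lies in $\mathrm{O}(p,p)\setminus \SO(p,p)$ and swaps the two components; this is consistent with the determinant-one condition being exactly what is needed.

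The only step I expect to need care is checking that $\mathrm{diag}(R,R)$ really represents the non-identity component of $\SO(p,p)$. Since $p\ge 2$, the group $\SO(p,p)$ has exactly two components, detected by the pair of signs of the determinants of its actions on a maximal positive and a maximal negative subspace. $\mathrm{diag}(R,R)$ has signs $(-,-)$, while the identity component has $(+,+)$, so it lies in the other component.
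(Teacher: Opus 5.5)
Your proof is correct and follows the paper's argument. Both use the graph parametrization $S\cong\mathrm{O}(p)$, the eigenvalue-$1$/determinant-parity criterion for non-transversality, and the reduction to showing that $\SO(p,p)$ preserves the two components of $S$. The only difference is cosmetic: the paper uses that $\SO(p,p)$ deformation retracts onto $S(\mathrm{O}(p)\times\mathrm{O}(p))$, acting by $\Gamma(\phi)\mapsto\Gamma(h\phi g^{-1})$, while you check the identity component and the explicit representative $\mathrm{diag}(R,R)$ of the other component, which rests on the same standard fact about the components of $\SO(p,p)$.
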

The counterexample presented for \cite[Lemma 6.8]{pozzetti_anosov_2023}, is the action of the automorphism group of a non-degenerate Hermitian form, on the space of traceless Hermitian forms.
Strong irreducibility is proven in \cref{sec:strong_irreducibility}, while the construction and the properties of the representation are described in \cref{sec:adjoint_representations}.
The case of complex Hermitian forms has already been considered in the HDR thesis \cite{monclair2025geometrie} of Daniel Monclair, whom we thank for suggesting it as a counterexample.

As described in \cref{sec:hermitian_forms}, we consider an $n$-dimensional vector space $V$ over $\mathbb F \in \{ \mathbb R, \mathbb C, \mathbb H\}$, and a fixed Hermitian form $h_{n-1,1}$ of signature $(n-1,1)$ over $V$.
After fixing a basis of $V$, we can identify the space of Hermitian forms on $V$ with the space of matrices in $\mathbb F^{n\times n}$ that are self-adjoint with respect to $h_{n-1,1}$.
This allows us to define the space of traceless Hermitian forms $\mathrm{Herm}_\mathbb F^0(V)$, consisting of forms that correspond to traceless self-adjoint matrices.
It carries an action of the group $G=\mathrm{Aut}(h_{n-1,1}) \cap \SL(V)$ of automorphisms with determinant $1$, which preserves the non-degenerate symmetric bilinear form given by the real trace of the product of the corresponding matrices.
This gives rise to a representation $\tau: G \to\SO\left(\mathrm{Herm}^0_\mathbb F(V)\right)$ that we prove to be projective Anosov when restricted to a uniform lattice.
Since its limit set is a smooth sphere inside the projective space $\mathbb P\left(\mathrm{Herm}^0_\mathbb F(V)\right)$, the tangent space at each point $\xi^1(x)$ may be identified with the quotient of a subspace $T_{\xi^1(x)} \xi^1(\partial_\infty \Gamma)$ of $\mathrm{Herm}^0_\mathbb F(V)$ by the line $\xi^1(x)$ itself.
\begin{proposition}[\cref{prop:pozzetti}]\label{ex:pozzetti}
    Let $\mathbb F \in \{ \mathbb C, \mathbb H\}$, $G = \mathrm{Aut}(h_{n-1,1}) \cap \SL(V)$, $\Gamma \leq G$ be a uniform lattice.
    Then the representation $\tau: \Gamma \to \SO\left(p', q'\right)$ corresponding to the action of $G$ on $\mathrm{Herm}^0_\mathbb F(V)$ is projective Anosov and strongly irreducible.
    Moreover, the map $\zeta: \partial_\infty \Gamma \to \mathcal F_{1, p'}(\mathrm{Herm}_\mathbb F^0(V))$ given by
    \[
    \zeta(x) = \left(\xi^1(x), T_{\xi^1(x)} \xi^1(\partial_\infty \Gamma) \right),
    \]
    is a $\Gamma$-equivariant smooth section of the projection $$\xi^1(\partial_\infty \Gamma) \times \mathcal F_{1, p'}(\mathrm{Herm}_\mathbb F^0(V)) \to \mathcal F_{1, p'}(\mathrm{Herm}_\mathbb F^0(V)),$$ with the property that for every pair of points $x, y \in \partial_\infty \Gamma$, the intersection $\zeta^{p'}(x) \cap \zeta^{p'}(y)$ is not trivial.
    In particular, for any $x \in \partial_\infty \Gamma$ the set of non-transverse flags to $\zeta(x)$ is of full measure with respect to any $\tau(\Gamma)$-quasi-invariant measure $\mu$ supported on $\zeta(\partial_\infty \Gamma)$.
\end{proposition}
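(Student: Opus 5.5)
The plan is to make everything explicit in the model case and then read off the properties from the structure of $G$. Realize $G=\mathrm{Aut}(h_{n-1,1})\cap\SL(V)$ as a rank-one simple Lie group, namely $\SU(n-1,1)$ or $\mathrm{Sp}(n-1,1)$ (for $\mathbb F=\mathbb H$ the determinant condition is automatic). Its symmetric space is $\mathbf H^{n-1}_{\mathbb F}$, and its visual boundary is the space of $h$-isotropic $\mathbb F$-lines $[v]$. The action $g\cdot A = gAg^{-1}$ on self-adjoint traceless matrices is the adjoint-type representation on $\mathrm{Herm}^0_\mathbb F(V)\cong i\mathfrak g$ (respectively the analogous irreducible module for $\mathbb H$). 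The form $(A,B)\mapsto \mathrm{Re}\,\mathrm{tr}(AB)$ is invariant, and I would compute its signature $(p',q')$ by diagonalizing on a Cartan decomposition.

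\textbf{Strong irreducibility.} The identity component of the Zariski closure of $\tau(\Gamma)$ is $\tau(G)$ by Borel density. So it suffices to see that $\mathrm{Herm}^0_\mathbb F(V)$ is an irreducible $G$-module. Its complexification is $\mathfrak{sl}_n(\mathbb C)$ for $\mathbb F=\mathbb C$, which is the adjoint representation of a simple group and hence irreducible. For $\mathbb F=\mathbb H$ it is $\Lambda^2_0\mathbb C^{2n}$ for $\mathrm{Sp}(2n,\mathbb C)$, which is also irreducible.

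\textbf{Projective Anosov property and the boundary maps.} Take the Cartan subspace $\mathfrak a=\mathbb R X$, where $X$ is the boost in a hyperbolic plane spanned by isotropic vectors $e_+,e_-$. The eigenvalues of $\mathrm{Ad}(\exp tX)$ on $\mathrm{Herm}^0$ are $2t$ with multiplicity one, spanned by the rank-one form $e_+e_+^*$, then $t,0,-t$, and $-2t$. The top eigenvalue is simple, so the singular value gap $\sigma_1/\sigma_2\sim e^{|\mu(\gamma)|}$ grows linearly in word length, and $\tau|_\Gamma$ is projective Anosov. The limit map is $\xi^1([v])=[v v^* h]$, the rank-one traceless-up-to-scaling form. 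This map is smooth, and its image is a smooth sphere of dimension $\dim_\mathbb R \partial \mathbf H_\mathbb F = \dim_\mathbb R\mathbb F\cdot(n-1)-1$. In particular the set of null rank-one lines is $G$-homogeneous. The section $\zeta$ is then equivariant and smooth because $\xi^1$ is an equivariant embedding, and its tangent space is
\[
T_{\xi^1(x)}=\{vw^*h+wv^*h : w\in V\}\cap\mathrm{Herm}^0,
\]
that is, $v^{\perp}$-type directions $vw^*+wv^*$ with $w$ ranging over a complement. I would check that this space has dimension $p'$ and is the maximal (totally isotropic plus line) subspace that appears in the flag, matching $\zeta^{p'}$.

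\textbf{The intersection property, which I expect to be the main step.} Take $x=[v]$ and $y=[u]$ with $x\neq y$, where $h(u,v)\neq0$. Consider elements of the form $v\lambda u^*h+u\bar\lambda v^*h$ with $\lambda \in \mathbb F$. These lie both in the span generated from $v$ (taking $w=u\bar\lambda$) and in the span generated from $u$. Their trace is $2\mathrm{Re}(\lambda h(u,v))$, so choosing $\lambda$ with this real part equal to zero produces a nonzero element in $T_{\xi^1(x)}\cap T_{\xi^1(y)}$. This is possible precisely because $\mathbb F\neq\mathbb R$, which explains the hypothesis. The care needed here is to use the correct identification of $T_{\xi^1(x)}$, including the traceless correction; the overall count is dimension-consistent.

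\textbf{The measure statement.} Every flag $\zeta(y)$ with $y\neq x$ fails to be transverse to $\zeta(x)$ by the previous step. Any $\tau(\Gamma)$-quasi-invariant measure $\mu$ on $\zeta(\partial_\infty\Gamma)$ has no atoms: an atom would have a finite $\Gamma$-orbit, contradicting the minimality of $\Gamma$ acting on $\partial_\infty\Gamma$, and quasi-invariance spreads mass along orbits. Hence $\mu(\{\zeta(x)\})=0$, and the non-transverse set has full measure.
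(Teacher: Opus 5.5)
Your argument is correct in its main line. At its core it is the paper's computation, carried out without coordinates.

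The paper first uses that the rank-one group $G$ acts transitively on pairs of distinct points of $G/P$, which reduces everything to the standard pair $P$, $P^t=g_0Pg_0^{-1}$. It then writes $\zeta^{p'}(P)=[\mathfrak g,x_0]$ in block form, conjugates by $g_0$, and reads off $\zeta^{p'}(P)\cap\zeta^{p'}(P^t)=\{\diag(a,0,\ldots,0,-a):\bar a=-a\}$.

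You instead describe $\zeta^{p'}([v])$ as $\{vw^*h+wv^*h : w\in V\}\cap\mathrm{Herm}^0_\mathbb F(V)$ and produce a common vector for an arbitrary pair. Your description is right and agrees with the paper's $[\mathfrak g,vv^*h]$:
\begin{itemize}
\item for $X\in\mathfrak g$ one has $[X,vv^*h]=(Xv)v^*h+v(Xv)^*h$ with $\mathrm{Re}\,h(v,Xv)=0$;
\item the map $w\mapsto vw^*h+wv^*h$ on $\{\mathrm{Re}\,h(v,w)=0\}$ has kernel $v\,\mathrm{Im}\,\mathbb F$, so its image has dimension $\dim_\mathbb R\mathbb F\,(n-1)=p'$.
\end{itemize}
For $v=e_1$, $u=e_n$ your vector $v\lambda u^*h+u\bar\lambda v^*h$ is exactly the paper's $\diag(\lambda,0,\ldots,0,-\lambda)$. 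It is nonzero because $\lambda\neq0$ and $u,v$ are $\mathbb F$-independent, which you should state.

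What each route buys:
\begin{itemize}
\item Yours needs no reduction to a standard pair, and it makes the role of $\mathbb F\neq\mathbb R$ transparent: that condition is exactly the existence of a nonzero $\lambda$ with $\mathrm{Re}(\lambda h(u,v))=0$.
\item Your weight argument proves the projective Anosov property directly. The paper instead inherits it from the $\mathbf H^{p',q'-1}$-convex-cocompactness established earlier.
\item The paper's coordinates give the full intersection, of dimension $\dim_\mathbb R\mathbb F-1$, not just a nonzero element.
\end{itemize}

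There are two corrections.

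First, the parenthetical calling $\zeta^{p'}(x)$ a totally isotropic subspace plus a line is wrong. For $A=vw^*h+wv^*h$ with $\mathrm{Re}\,h(v,w)=0$ one finds $-\tr_\mathbb R(A^2)=2|h(v,w)|^2$, so $\zeta^{p'}(x)$ is positive semi-definite, not isotropic. This is harmless, since only its dimension is used.

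Second, and more seriously, your measure argument relies on a false claim: a quasi-invariant measure can have atoms. Quasi-invariance only forces every point in the orbit of an atom to be an atom. For instance, $\sum_{\gamma\in\Gamma}c_\gamma\,\delta_{\zeta(\gamma z)}$ with $c_\gamma>0$ summable is a finite $\tau(\Gamma)$-quasi-invariant measure supported on $\zeta(\partial_\infty\Gamma)$. Minimality of the action only says orbits are infinite, and that contradicts nothing.

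The step is unnecessary, however. Since $\zeta^{p'}(x)\cap\zeta^{p'}(x)\neq0$, the flag $\zeta(x)$ is itself non-transverse to $\zeta(x)$. So the intersection property for all pairs, including $y=x$, puts all of $\zeta(\partial_\infty\Gamma)$ in the non-transverse set. That set therefore has full measure for every measure supported on $\zeta(\partial_\infty\Gamma)$, quasi-invariant or not. Drop the atom argument and your proof is complete.
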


Another independent point of interest about these representations is that they provide examples of $\mathbf H^{p,q}$-convex-cocompact representations of maximal virtual cohomological dimension (see \cref{sec:representations_maximal_dimension}).
These were introduced by Beyrer and Kassel in \cite{beyrer2023}, where they show that they constitute connected components of representation varieties $\mathfrak{X}(\pi_1(N), G)$ consisting entirely of discrete and faithful representations, where $N$ is a manifold of dimension greater than $2$ and $G$ a higher rank semisimple Lie group. Moreover, they are particularly relevant in our context because they are projective Anosov representations whose limit set is a Lipschitz sphere.
A more precise description of their characteristics is summarized in the following proposition:
\begin{restatable}{proposition}{CounterexampleReps}\label{prop:adjoint_representation_convex_cocompact}
    Let $V$ be a finite dimensional vector space over $\mathbb F \in \{ \mathbb R, \mathbb C, \mathbb H \}$, $\tilde h_{p,q} \in \mathrm{Herm}_\mathbb F(V)$ be a non-degenerate Hermitian form of signature $(p,q)$, and set
    \[
    p' = \dim_\mathbb R (\mathbb F) \cdot pq, \quad q' = \dim_\mathbb R (\mathbb F) \cdot \left(\frac{p(p-1)}{2} + \frac{q(q-1)}{2}\right) + p + q - 1.
    \]
    If $\Gamma \subseteq G = \mathrm{Aut}(\tilde h_{p,q}) \cap \mathrm{SL}(p+q, \mathbb F)$ is a uniform lattice, then the representation corresponding to the action of $\mathrm{Aut}(\tilde h_{p,q})$ on the space of traceless Hermitian forms
    \[
    \tau: \Gamma \hookrightarrow G \to \SO\left(p', q'\right)
    \]
    is strongly irreducible. Moreover, it admits an invariant complete connected spacelike embedded submanifold of dimension $p'$ in $\mathbf H^{p',q'-1}$, that is diffeomorphic to $G/K$ and over which $\tau(\Gamma)$ acts cocompactly.
    Finally the representation is $\mathbf H^{p',q'-1}$-convex-cocompact if and only if $\min\{p,q\} = 1$, in which case it is $\mathbf H^{p',q'-1}$-convex-cocompact of maximal virtual cohomological dimension.
\end{restatable}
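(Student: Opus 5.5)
Set $W = \mathrm{Herm}^0_\mathbb F(V)$ with the form $\langle A, B\rangle = \mathrm{Re}\,\mathrm{tr}(AB)$, where Hermitian forms are identified with $\tilde h_{p,q}$-self-adjoint matrices. First I compute the signature. Write $\tilde h_{p,q} = \mathrm{diag}(I_p, -I_q)$. A self-adjoint matrix then has block form
\[
\begin{pmatrix} A & B \\ -B^* & D \end{pmatrix},
\]
with $A, D$ Hermitian (in the usual sense) and $B \in \mathbb F^{p \times q}$. The trace form equals $\mathrm{tr}(A^2) + \mathrm{tr}(D^2) - 2\,\mathrm{Re}\,\mathrm{tr}(BB^*)$. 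It is therefore negative definite on the off-diagonal block, of real dimension $\dim_\mathbb R(\mathbb F)\,pq = p'$, and positive definite on the traceless diagonal part, of dimension $q'$. After the sign convention that makes $\mathbf H^{p',q'-1}$ the relevant space, this gives $\tau : G \to \SO(p',q')$. The point $o = [\tilde h_{p,q} - \tfrac{p-q}{p+q}\,\mathrm{Id}]$ is fixed by $K = S(\mathrm{U}(p)\times\mathrm{U}(q))$ (with the appropriate $\mathbb F$-analogue). Its sign shows that it lies in $\mathbf H^{p',q'-1}$ after normalization.

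\textbf{Strong irreducibility.} $W$ is the complexification-free form of the adjoint representation: it is $i\mathfrak g$ for $\mathbb F = \mathbb C$, $\mathfrak p \oplus \mathfrak k$-twisted in general, and for $\mathbb F = \mathbb R$ it is $\mathrm{Sym}^2_0$. These are irreducible $G$-modules, since the corresponding complex groups act irreducibly on these spaces by standard highest-weight theory. Because $G$ is connected and semisimple with no compact factors, Borel density makes $\Gamma$ Zariski-dense in $G$. Any finite-index subgroup is also a lattice, hence also Zariski-dense, and therefore acts irreducibly. This gives strong irreducibility.

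\textbf{The embedding.} Define $\iota : G/K \to \mathbf H^{p',q'-1}$ by $gK \mapsto [\tau(g) o]$. It is an equivariant injective immersion, because $\mathrm{Stab}(o) = K$: the stabilizer of the form $\tilde h_{p,q}$ up to the scalar correction is exactly the block-preserving subgroup. The differential at $o$ is $\mathfrak p \to T_o$, $X \mapsto [X, o]$, and its image is exactly the off-diagonal block. That block is negative, hence spacelike in the $\mathbf H$ convention, and has dimension $p' = \dim G/K$. The orbit is closed since $K$ is compact, so $\iota$ is proper. This makes it a complete embedded spacelike submanifold diffeomorphic to $G/K$, and $\Gamma$ acts cocompactly on it.

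\textbf{Convex-cocompactness.} By the characterization of \cite{beyrer2023} recalled in \cref{sec:convex_cocompact_representations}, a strongly irreducible representation preserving a complete spacelike $p'$-submanifold with a cocompact action is $\mathbf H^{p',q'-1}$-convex-cocompact if and only if it is projective Anosov, equivalently word hyperbolic with the correct boundary. If $\min\{p,q\} \ge 2$, then $G$ has real rank at least $2$. Its uniform lattice then contains $\mathbb Z^2$ (a flat), so $\Gamma$ is not Gromov hyperbolic, and hence $\tau$ cannot be convex-cocompact. If $\min\{p,q\} = 1$, then $G/K = \mathbf H^n_\mathbb F$ has rank one. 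I would check directly that $\tau$ is $P_1$-Anosov: the Cartan projection of $\tau(\exp tX)$ for $X$ in $\mathfrak a$ has a simple top eigenvalue. Indeed, the weights of the adjoint-type representation on $\mathfrak a$ are $2\alpha$ with multiplicity one, and the gap over the next weight $\alpha$ grows linearly in $t$. The boundary map then sends $\partial \mathbf H^n_\mathbb F$ to the rank-one degenerate forms $v v^*$ with $v$ null. Finally, maximal virtual cohomological dimension follows because $\mathrm{vcd}(\Gamma) = \dim G/K = p'$.

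\textbf{Main obstacle.} The hard step is the ``only if / if'' direction of convex-cocompactness: identifying precisely which criterion from \cite{beyrer2023} to apply, and verifying transversality of the boundary map, i.e. that $\langle vv^*, ww^*\rangle \ne 0$ for distinct null lines. I expect this transversality to reduce to $|h(v,w)|^2 \neq 0$.
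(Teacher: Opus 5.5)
Your overall route matches the paper's. The base point agrees: your $\tilde h_{p,q}-\frac{p-q}{p+q}\mathrm{Id}$ is a multiple of the paper's $x_0=\diag(I_p,-\frac pq I_q)$. So do the orbit map $gK\mapsto\tau(g)[x_0]$, the computation that the differential sends $\mathfrak g/\mathfrak k$ onto the spacelike off-diagonal block, Borel density plus complexification for strong irreducibility, and the rank dichotomy for convex-cocompactness.

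The step that fails as written is ``the orbit is closed since $K$ is compact, so $\iota$ is proper''. A compact stabilizer says nothing about closedness of the orbit or properness of the orbit map. An irrational linear flow on a $2$-torus has trivial stabilizers, yet its orbit maps are injective immersions with dense image, neither proper nor embeddings. Since you derive embeddedness from properness, the ``embedded'' part of the statement is left unproved.

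You also read completeness off properness, which is not valid in a pseudo-Riemannian target. For example, the curve $x\mapsto(x,\log\cosh x)$ in $\mathbb R^{1,1}$ is properly embedded and spacelike but has finite length.

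The paper closes this in \cref{lem:criterion_spacelike_cocompact}:
\begin{itemize}
\item The metric $\iota^*g$ is Riemannian and $\Gamma$-invariant, indeed $G$-invariant.
\item Since $\Gamma\backslash G/K$ is compact, $\iota^*g$ is complete.
\item The criterion \cref{lem:spacelike_embedding} then applies: an injective spacelike immersion of a $p$-manifold into $\mathbf H^{p,q}$ with complete induced metric is an embedding. This is where the dimension count $\dim G/K=p'$ is really used.
\end{itemize}

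Alternatively, you can prove closedness of the orbit directly. The orbit $\tau(G)x_0$ is the set of traceless $\tilde h_{p,q}$-self-adjoint $X$ with $(X-\mathrm{Id})(X+\frac pq\mathrm{Id})=0$ whose $1$-eigenspace is positive definite.
\begin{itemize}
\item A limit of such $X$ still satisfies the polynomial identity, and tracelessness forces the same multiplicities.
\item Its two eigenspaces are $\tilde h_{p,q}$-orthogonal, hence non-degenerate.
\item The $1$-eigenspace is the image of the projection $(X+\frac pq)/(1+\frac pq)$, which is continuous in $X$. It is therefore a limit of positive definite $p$-planes, hence positive semidefinite.
\item Being non-degenerate and semidefinite, it is positive definite, so the limit lies in the orbit.
\end{itemize}
Add homogeneity for completeness and your argument goes through.

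Smaller points:
\begin{itemize}
\item For $\mathbb F=\mathbb R$, $G=\SO(p,q)$ is not connected. Apply Borel density to $G^0$; irreducibility of the $\mathfrak g$-action is what you actually use.
\item For $\mathbb F=\mathbb H$, name the complexified module: $\wedge^2_0\mathbb C^{2n}$ under $\mathfrak{sp}(2n,\mathbb C)$.
\item The Anosov check and the transversality you flag as the main obstacle are unnecessary. In rank one $\Gamma$ is hyperbolic, and the paper applies \cite[Corollary 1.11]{beyrer2023} directly to the proper cocompact action on the spacelike $p'$-submanifold.
\end{itemize}
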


\subsection{Plan of the paper}
For the reader's convenience, we have gathered the notation and conventions used throughout the paper in \cref{sec:notation}.
In \cref{sec:anosov} we recall the definitions of projective Anosov representations in $\mathrm{SL}(d,\mathbb R)$, and $\mathbb H^{p,q}$-convex-cocompact representations.
This will be necessary in the proof of \cref{thm:main} in \cref{sec:main_result} and in the construction of the representations of \cref{sec:equivariant_spacelike_embeddings}.
Finally, in \cref{sec:so_p_p_action} we offer a counterexample to \cite[Proposition 10.3]{labourie_anosov_2006} that is more direct than the one provided by the representations we construct in the preceding section.

\section{Notation and conventions}\label{sec:notation}
We collect here the main symbols and conventions used throughout the paper.
\begin{itemize}
    \item $\Gamma$: a discrete group.
    \item $\partial_\infty \Gamma$: Gromov boundary of a hyperbolic group $\Gamma$.
    \item Singular values: $\sigma_1(g)\geq\dots\geq\sigma_d(g)$ for $g\in\SL(d,\mathbb R)$; 
    \item $\mathfrak a^+$: the closed Weyl chamber, consisting of diagonal traceless matrices with non-increasing entries in the case of $G = \SL(d,\mathbb R)$.
    \item Cartan projection $\mu: G \to \mathfrak a^+$.
    \item Limit map / limit set: For a projective Anosov representation $\rho$, the boundary map is $\xi=(\xi^1,\xi^{d-1}):\partial\Gamma\to\mathcal F_{1,d-1}(\mathbb R^d)$ and the limit set is $\Lambda_\rho=\xi^1(\partial\Gamma)\subseteq\mathbb P(\mathbb R^d)$.
    \item $\mathbf H^{p,q}$: pseudo-Riemannian hyperbolic space (negative lines in $\mathbb P(\mathbb R^{p,q+1})$); $\hat{\mathbf H}^{p,q}$ denotes its two-fold cover of unit negative vectors.
    \item $\mathbb H^n$ and $\mathbf H^n_\mathbb R$: real hyperbolic space of dimension $n$.
    \item $\mathbf H^n_\mathbb C, \mathbf H^n_\mathbb H$: complex and quaternionic hyperbolic space of dimension $n$.
    \item Grassmannian / flag varieties: $\mathcal G_k(\mathbb R^d)$ the $k$-Grassmannian; $\mathcal F_\Theta$ the partial flag variety associated to a subset $\Theta$ of simple roots $\Pi$.
    \item Roots and weights: Simple roots $\alpha_i(a)=a_i-a_{i+1}$; $\alpha_{i,j}(a)=a_i-a_j$; fundamental weights $\omega_k(a)=a_1+\dots+a_k$ for $a\in\mathfrak a$.
    \item Functionals: Falconer functional $F_s$ and unstable Jacobian $J_p^u$ given in \cref{def:functionals}. Their critical exponents are denoted by $\delta_F(\rho)$, $\delta_{J_p^u}(\rho)$.
    \item Dimension notation: $\dim_{\mathcal H}$ stands for Hausdorff dimension (ambient metric indicated by context); $\mathrm{vcd}(\Gamma)$ is the virtual cohomological dimension.
    \item “Lipschitz sphere”: An embedded topological sphere of Lipschitz regularity.
    \item Singular subspaces: For $g \in \SL(d, \mathbb R)$, $U_p(g)$ is the span of the first $p$ singular vectors (well-defined when $\sigma_p(g)>\sigma_{p+1}(g)$). 
    \item Adjoint representation: $\Ad$ is the adjoint representation and $\ad$ its differential.
    \item Conjugate transpose of a matrix (see \cref{sec:hermitian_forms}): For $A = (a_{ij})_{i,j} \in \{\mathbb R^{n\times n}, \mathbb C^{n\times n}, \mathbb H^{n\times n}\}$:
    \[
    A^* = \begin{cases}
        (\bar a_{ji})_{i,j}, &\text{ on } \mathbb C^{n\times n}, \mathbb H^{n\times n},\\
        (a_{ji})_{i,j}, &\text{ on } \mathbb R^{n\times n}
    \end{cases}
    \]
    \item Hermitian forms (see \cref{sec:hermitian_forms}):
\[
\begin{array}{c}
h_{n+1}(x, y) = \sum_{i=1}^{n+1} \bar x_i y_i \\[6pt]
h_{n,1}(x,y) = \sum_{i=1}^n \bar x_i y_i -\bar x_{n+1} y_{n+1}\\[6pt]
\tilde h_{n,1}(x,y) = \bar x_1 y_{n+1} + \bar x_{n+1} y_1 + \sum_{i=2}^n \bar x_i y_i
\end{array}
\]
for
\begin{align*}
    x &= \sum\limits_i x_i e_i, y = \sum\limits_i y_i e_i , \text{ if } \mathbb F \in \{\mathbb R, \mathbb C\},\\
    x &= \sum\limits_i e_i x_i, y = \sum\limits_i e_i y_i, \text{ if } \mathbb F = \mathbb H.
\end{align*}
    \item Hermitian matrices (see \cref{sec:hermitian_forms}):
\[
H_n = I_{n}, \quad
H_{n-1,1} = 
\begin{pmatrix}
I_n & 0 \\
0 & -1
\end{pmatrix}, \quad
\tilde H_{n-1,1} = 
\begin{pmatrix}
0 & 0 & 1 \\
0 & I_{n-1} & 0 \\
1 & 0 & 0
\end{pmatrix}.
\]
    
\end{itemize}

\section{Anosov representations}\label{sec:anosov}
One of the main choices when generalizing Sullivan's result to higher rank Lie groups is to find a suitable replacement for the limit set.
In the context of Anosov representations, a natural candidate for this is the embedding of the Gromov boundary of $\Gamma$ into the projective space, given by the boundary map of the representation.
The parts of this theory that we will need are presented in \cref{sec:anosov_definitions}, using the Lie theory of $\SL(d,\mathbb R)$ recalled in \cref{sec:lie_theory}. 

A particular class of Anosov representations that we will be interested in are $\mathbf H^{p,q}$-convex-cocompact representations, defined in \cref{sec:convex_cocompact_representations}.
These include $\mathbf H^{p,q}$-convex-cocompact representations of maximal cohomological dimension, which are one of the main sources of examples of Anosov representations with Lipschitz limit set.
Here they will appear in the third case of \cref{thm:main}, and in the examples of representations that we construct in \cref{sec:adjoint_representations}.

\subsection[\texorpdfstring{Lie theory for $\mathrm{SL}(d,\mathbb R)$}{Lie theory for SL(d,R)}]{Lie theory for $\SL(d,\mathbb R)$}\label{sec:lie_theory}
In order to define Anosov representations, we will need certain facts about the structure of $\SL(d,\mathbb R)$ and its Lie algebra $\ssl(d,\mathbb R)$.
For this reason, in this section we include a brief overview of the relevant Lie theory.

The \emph{Cartan subalgebra} $\mathfrak a$ of $\ssl(d,\mathbb R)$ is the subalgebra of diagonal traceless matrices, and we will fix the Weyl chamber $\mathfrak a^+$ of diagonal traceless matrices with non-increasing entries.
The Cartan projection will be denoted with $\mu: \SL(d,\mathbb R) \to \mathfrak a^+$.
The following functionals defined over $\mathfrak a$ will be used:
\begin{align*}
    \alpha_{i,j}(a) &= a_i - a_j, \quad 1 \leq i \neq j \leq d,\\
    \alpha_k(a) &= a_k - a_{k+1}, \quad 1 \leq k \leq d-1,\\
    \omega_k(a) &= a_1 + \cdots + a_k,
\end{align*}
for $a = \diag(a_1, \cdots, a_k) \in \mathfrak a$.

Recall that the simple positive roots for this Weyl chamber are given by $\Pi = \{ \alpha_1, \cdots, \alpha_{d-1} \}$.
For ease of notation, we will often identify subsets of $\Pi$ with subsets of $\{1, \cdots, d-1\}$ according to the index of the roots.
To each such subset $\Theta \subseteq \Pi$ we associate the subset $\iota \Theta = \{ d-\theta : \theta \in \Theta \} \subseteq \Pi$, and the Levi subspace $\mathfrak a_\Theta$, which is the intersection of the walls of the Cartan subalgebra that correspond to the roots in $\Theta^c$:
\[
\mathfrak a_\Theta = \bigcap_{i \not \in \Theta} \ker \alpha_i.
\]
The flag variety $\mathcal F_\Theta$ that corresponds to $\Theta$ is the flag space of subspaces with the respective dimensions:
\[
\mathcal F_\Theta = \left\{ V_{\theta_1} \leq \cdots < V_{\theta_k} : \dim V_{\theta_i} = \theta_i \text{ for } \theta_i \in \Theta \right\}.
\]

\subsection{Definition and boundary maps of Anosov representations}\label{sec:anosov_definitions}
Here we recall the definition and properties of Anosov representations and their boundary maps.
They generalize convex-cocompact representations to higher-rank Lie groups, and thus provide a natural setting for extending Sullivan's result.
The following definition is in fact a characterization proved in \cite{kapovich2017anosov,bochi2019anosov} for the original one of \cite{labourie_anosov_2006}.
\begin{definition}[Anosov representation]\label{def:anosov}
    Let $\rho: \Gamma \to \SL(d, \mathbb R)$ be a linear representation of a discrete group $\Gamma$ and $p \in \llbracket 1, d-1 \rrbracket$.
    We say that $\rho$ is \emph{$p$-Anosov} if there exist constants $c, C>0$ such that for all $\gamma \in \Gamma$:
    \[
        \frac{\sigma_{p+1}}{\sigma_p}(\rho(\gamma)) \leq C e^{-c |\gamma|},
    \]
    where $\sigma_1(g) \geq \cdots \geq \sigma_d(g)$ are the singular values of $g$. When $p=1$, we say that the representation is projective Anosov.
\end{definition}
\subsubsection{Gromov hyperbolic groups}
One of the main features of Anosov representations is that they reflect the hyperbolic geometry of $\Gamma$, as a Gromov hyperbolic group.
\begin{definition}[Gromov hyperbolic group]\label{def:hyperbolic_group}
    Consider a finitely generated group $\Gamma$ equipped with the word-length metric $d$.
    We say that $\Gamma$ is $\delta$-hyperbolic if for all $x,y,z \in \Gamma$ we have
    \[
    (x,y) \geq \min \{ (x,z), (y,z) \} - \delta,
    \]
    where the Gromov product $(x,y)$ is defined as
    \[
    (x,y) = \frac{1}{2} \left(d(x,e) + d(y,e) - d(x,y)\right).
    \]
    In that case, the Gromov boundary $\partial_\infty \Gamma$ can be defined as the set of equivalence classes of sequences in $\Gamma$ that escape to infinity under the equivalence relation
    \[
    (\gamma_n) \sim (\gamma_n') \text{ if and only if } \lim_{n \to \infty} (\gamma_n, \gamma_n') = \infty.
    \]
\end{definition}
An important fact of hyperbolic group theory is that for a hyperbolic group $\Gamma$, the boundary $\partial_\infty \Gamma$ admits a topology for which $\bar \Gamma = \Gamma \cup \partial_\infty \Gamma$ is compact.
Here we will not need the explicit construction of this topology, so we refer the reader to \cite{ghys2013groupes} for more details.

\subsubsection{Boundary maps of Anosov representations}
The following theorem summarizes some key properties of Anosov representations that exhibit their dynamical behaviour and how they are related to hyperbolic groups.
Essentially, it states that a group $\Gamma$ admitting such a representation is Gromov hyperbolic and admits a limit map which embeds the Gromov boundary $\partial_\infty \Gamma$ into the flag variety of $\mathrm{SL}(d,\mathbb R)$ in a dynamics-preserving manner.

In the expression of limit maps that appear in the theorem below, we use the notation 
\[
    U_p(g) = k_g \cdot  (\mathbb R e_1 \oplus \cdots \oplus \mathbb R e_p)
\]
for an element $g \in \SL(d,\mathbb R)$ that has a gap $\sigma_p(g) > \sigma_{p+1}(g)$ in its singular values and with Cartan decomposition $g = k_g e^{\mu(g)} l_g$.
It should be noted that for a general element $g \in \SL(d,\mathbb R)$, the subspace $U_p(g)$ is not well-defined, since the compact factors $k_g, l_g$ in the Cartan decomposition are not unique.
It is however well-defined when $\sigma_p(g) > \sigma_{p+1}(g)$, since then the first $p$-columns of $k_g$ are uniquely determined for any Cartan decomposition of $g$.
Indeed, the set $\{ k_g e_i : 1 \leq i \leq d \}$ is an orthogonal basis of $\mathbb R^d$ that consists of the axes of the ellipsoid $\{g v : v \in \mathbb R^d, \|v\| = 1\}$.
Because the lengths of the axes are given by the singular values of $g$, the $p$ longest ones are uniquely determined.
\begin{theorem}[\cite{labourie_anosov_2006,kapovich2017anosov}]\label{thm:boundary_map}
    \index{Anosov!boundary maps}
    Let $\rho: \Gamma \to \SL(d, \mathbb R)$ be a $p$-Anosov representation of a discrete group $\Gamma$.
    Then the following hold:
    \begin{enumerate}[label=(\roman*)]
        \item $\Gamma$ is Gromov hyperbolic
        \item there exists a $\rho$-equivariant continuous map $\xi = (\xi^p, \xi^{d-p}): \partial_\infty \Gamma \to \mathcal F_{p, d-p}(\mathbb R^d) $ over its Gromov boundary $\partial_\infty \Gamma$ such that for all $x \in \partial_\infty \Gamma$ and any quasi-geodesic 
        $\gamma_n$ that converges to $x$ we have
        \begin{align*}
            \xi^p(x) &= \lim_{n \to \infty} U_p(\rho(\gamma_n))\\
            \xi^{d-p}(x) &= \lim_{n \to \infty} U_{d-p}(\rho(\gamma_n)),
        \end{align*}
        \item The boundary map $\xi$ is dynamics-preserving, i.e.\ 
        \[
        \rho(\gamma_n) l \to \xi^p(x)
        \]
        for any sequence $(\gamma_n)_n$ of $\Gamma$ for which $\gamma_n \to x$ and $\gamma_n^{-1} \to y$ as $n \to \infty$, and any subspace $l \in \mathcal G_{p}(\mathbb R^d)$ that is transverse to $\xi^{d-p}(y)$.
        \item The boundary maps $(\xi^p, \xi^{d-p})$ are transverse, in the sense that for every $x, y \in \partial_\infty \Gamma, \xi^p(x) \oplus \xi^{d-p}(y) = \mathbb R^d$ unless $x=y$.
    \end{enumerate}
\end{theorem}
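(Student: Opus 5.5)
The plan follows the approach of Bochi--Potrie--Sambarino \cite{bochi2019anosov} and Kapovich--Leeb--Porti \cite{kapovich2017anosov}. Everything is driven by one linear-algebra estimate. For $g,h \in \SL(d,\mathbb R)$ with $\sigma_p(g) > \sigma_{p+1}(g)$ and $\sigma_p(gh) > \sigma_{p+1}(gh)$, I would prove
\[
d\bigl(U_p(gh), U_p(g)\bigr) \leq \|h\|\,\|h^{-1}\| \,\frac{\sigma_{p+1}}{\sigma_p}(g),
\]
with the distance taken in $\mathcal G_p(\mathbb R^d)$. This comes from writing $g = k_g e^{\mu(g)} l_g$ and noting that $gh$ maps the unit ball to an ellipsoid whose $p$ longest axes are close to those of $g$, up to the distortion $\|h\|\|h^{-1}\|$. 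The same estimate applied to the dual representation $g \mapsto (g^{-1})^{T}$ will handle $U_{d-p}$.

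\textbf{Step (i), hyperbolicity.} Definition~\ref{def:anosov} gives
\[
\log\frac{\sigma_p}{\sigma_{p+1}}(\rho(\gamma)) \geq c|\gamma| - \log C,
\]
so $\rho$ is a quasi-isometric embedding and in particular has finite kernel and discrete image. To get hyperbolicity I would use the Bochi--Potrie--Sambarino argument. First, from the uniform gap and the estimate above, show that geodesic segments in the Cayley graph are mapped to sequences whose $U_p$ lie in uniformly small neighbourhoods, and deduce that these sequences fellow-travel. Then show that the Gromov product $(x,y)$ is comparable to
\[
-\log \sin \angle\bigl(U_p(\rho(x)), U_{d-p}(\rho(y))^{\perp}\bigr)
\]
up to additive and multiplicative constants. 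The thin-triangle inequality of Definition~\ref{def:hyperbolic_group} then follows from the corresponding ultrametric-type inequality for angles between subspaces. Alternatively, one can cite the Kapovich--Leeb--Porti Morse lemma, since uniformly regular undistorted subgroups are hyperbolic. I expect this step to be the main obstacle, since it is where the group-theoretic conclusion must be extracted from purely linear data. If needed, I would simply invoke \cite{bochi2019anosov} for it.

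\textbf{Step (ii), construction of $\xi$.} Let $(\gamma_n)$ be a quasi-geodesic converging to $x$. Consecutive elements differ by elements of bounded word length, so $\|h\|\|h^{-1}\|$ is uniformly bounded. Combining the estimate with the exponential gap, the sequence $U_p(\rho(\gamma_n))$ is Cauchy with distances summable like $e^{-cn}$, so its limit exists. Two quasi-geodesics converging to the same point stay at bounded distance from each other by the Morse lemma in $\Gamma$, so the limit depends only on $x$. The same exponential rate gives continuity in $x$. Equivariance follows from $U_p(\rho(\eta\gamma_n))$ and $\rho(\eta)U_p(\rho(\gamma_n))$ being asymptotic, since $\rho(\eta)$ is fixed. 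The argument for $\xi^{d-p}$ is identical.

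\textbf{Steps (iii) and (iv).} For (iii), write $\rho(\gamma_n) = k_n e^{\mu_n} l_n$. The hypothesis $\gamma_n^{-1} \to y$ gives $U_{d-p}(\rho(\gamma_n^{-1})) \to \xi^{d-p}(y)$, and $U_{d-p}(\rho(\gamma_n^{-1}))$ is exactly $l_n^{-1}$ applied to the last $p$ basis vectors' orthogonal complement. So a $p$-plane $l$ transverse to $\xi^{d-p}(y)$ stays uniformly transverse to the repelling subspace of $\rho(\gamma_n)$. The gap $\sigma_{p+1}/\sigma_p \to 0$ then forces $\rho(\gamma_n) l$ to approach $U_p(\rho(\gamma_n))$, which tends to $\xi^p(x)$.

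For (iv), take $x \neq y$ and a bi-infinite geodesic $(\gamma_n)_{n\in\mathbb Z}$ through $e$ from $y$ to $x$. Then $\xi^p(x)$ is the limit of $U_p(\rho(\gamma_n))$ and $\xi^{d-p}(y)$ the limit of $U_{d-p}(\rho(\gamma_{-n}))$. Transversality reduces to a lower bound on the angle between these subspaces in terms of the Gromov product $(\gamma_n,\gamma_{-n}) = O(1)$. This is the same comparison between angles and Gromov products established in Step (i), so it should come for free once that step is done.
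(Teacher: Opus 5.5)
\textbf{Gap in (iv).} The paper gives no proof of this theorem; it quotes it from \cite{labourie_anosov_2006,kapovich2017anosov}. Your treatment of (ii) and (iii) is the standard one and is essentially sound. Step (iv), however, has a genuine gap.

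You reduce transversality to a uniform lower bound on $\sin\angle\bigl(U_p(\rho(\gamma_n)),U_{d-p}(\rho(\gamma_{-n}))\bigr)$ along a geodesic through $e$, and say this comes for free from Step (i). It does not. Your perturbation estimate only yields the easy half of the angle--Gromov product comparison: a large Gromov product forces nearby $U_p$'s, hence a small angle. The half you need is that a bounded Gromov product forces uniform transversality. That half is only announced in Step (i), with no argument, and it disappears altogether if, as you allow, Step (i) is replaced by a citation.

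Nor can it be extracted from the gaps of $B=\rho(\gamma_n)$, $A=\rho(\gamma_{-n}^{-1})$ and $AB=\rho(\gamma_{-n}^{-1}\gamma_n)$ alone. In $\SL(3,\mathbb R)$ with $p=1$, take $B=\diag(\lambda,1,\lambda^{-1})$ and $A=\diag(\lambda^2,\lambda^{-1},\lambda^{-1})R$, where $R$ is the rotation by $\pi/2$ in the $(e_1,e_2)$-plane. The ratios $\sigma_1/\sigma_2$ of $B$, $A$, $AB$ are $\lambda$, $\lambda^3$, $\lambda^2$. So with $\lambda=e^{cn}$ all three satisfy the gap bounds for word lengths $n$, $n$, $2n$. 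Yet $U_1(B)=\mathbb Re_1\subset U_2(A^{-1})=\mathrm{span}(e_1,e_3)$.

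\textbf{What is missing.} For a representation, this configuration is ruled out by the gap condition on \emph{every} subword $\gamma_i^{-1}\gamma_j$ of the geodesic. Exploiting that is a genuinely separate and substantive step, not a by-product of Step (i). The usual tools are:
\begin{itemize}
\item the Bochi--Gourmelon theorem that uniform gaps along all subproducts yield a dominated, hence uniformly transverse, splitting, on which \cite{bochi2019anosov} rely;
\item the Kapovich--Leeb--Porti Morse lemma, which shows that the image of a bi-infinite geodesic is a Morse quasi-geodesic whose two ends define antipodal, i.e.\ transverse, flags.
\end{itemize}
One of these must be invoked or proved. The same ingredient is what your two-sided comparison in Step (i) silently relies on. Note also that the quantity you wrote there, with $U_{d-p}(\rho(y))^\perp$, does not detect transversality. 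It should be the angle between $U_p(\rho(x))$ and $U_{d-p}(\rho(y))$ itself.

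\textbf{Smaller slips.}
\begin{itemize}
\item In (iii) the repelling subspace is $l_n^{-1}\,\mathrm{span}(e_{p+1},\dots,e_d)$, the orthogonal complement of the \emph{first} $p$ basis vectors, not the last $p$.
\item Step (iii) uses $U_p(\rho(\gamma_n))\to\xi^p(x)$ and $U_{d-p}(\rho(\gamma_n^{-1}))\to\xi^{d-p}(y)$ for arbitrary sequences, whereas (ii) only treats quasi-geodesics. This follows from your exponential estimate by fellow-travelling, but it has to be said.
\item Equivariance needs the left-multiplication bound $d(U_p(ag),aU_p(g))\le\|a\|\,\|a^{-1}\|\,\frac{\sigma_{p+1}}{\sigma_p}(g)$. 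It is proved the same way as the right-multiplication bound you stated, but it is not that estimate.
\end{itemize}
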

Given a projective Anosov representation $\rho: \Gamma \to \SL(d,\mathbb R)$, we will call $\xi_\rho^1(\partial_\infty \Gamma) \subseteq \mathbb P(\mathbb R^d)$ the \emph{limit set} of the representation.
The terminology is justified by the fact that in the case of a rank-one representation into $\SO_0(n,1)$, the limit set is the usual limit set of the group action on the hyperbolic space, i.e.\ the set of accumulation points of an orbit in the boundary of the space.
Indeed, we may consider the projective model of the hyperbolic space
\[
\mathbf H_\mathbb R^d = \{ \mathbb Rx \in \mathbb P(\mathbb R^{d+1}) : \langle x,x \rangle_{d,1} < 0 \},
\]
where $\langle \cdot, \cdot \rangle_{d,1}$ denotes a non-degenerate symmetric bilinear form of signature $(d,1)$ on $\mathbb R^{d+1}$.
In that model, the boundary $\partial \mathbf H_\mathbb R^d$ is the set of isotropic lines in $\mathbb P(\mathbb R^{d+1})$.
Given now a convex-cocompact representation $\rho:\Gamma \to \SO_0(d,1)$, the Milnor-Svarc lemma implies that the orbit map $\Gamma \to \mathbb H^d$ is a quasi-isometry.
Using a standard result in the theory of hyperbolic spaces, one may extend this to a continuous map $\bar \Gamma \to \bar{\mathbb H^d} = \mathbb H^d \cup \partial \mathbb H^d$.
Furthermore, its restriction to $\partial_\infty \Gamma$ gives rise to a topological embedding $\partial_\infty \Gamma \hookrightarrow \partial \mathbb H^d$ that coincides with the limit map $\xi_\rho^1: \partial_\infty \Gamma \to \mathbb P(\mathbb R^{d+1})$ of \cref{thm:boundary_map}.
Unravelling the definitions, we see that
\[
\Lambda_{\rho(\Gamma)} = \left(\rho(\Gamma) \cdot p\right)' = \overline{\rho(\Gamma) \cdot p} - \rho(\Gamma) \cdot p = \xi_\rho^1(\partial_\infty \Gamma)
\]
where $p$ is any fixed point in $\mathbb H^d$.

\subsection[\texorpdfstring{Convex-cocompact representations in $\mathrm{SO}(p,q+1)$}{Convex-cocompact representations in SO(p,q+1)}]{Convex-cocompact representations in $\mathrm{SO}(p,q+1)$}\label{sec:convex_cocompact_representations}
Recall that the pseudo-Riemannian hyperbolic space $\mathbf H^{p,q}$ is defined as the set of negative lines in $\mathbb P(\mathbb R^{p+q+1})$, with respect to a non-degenerate symmetric bilinear form of signature $(p,q+1)$:
\[
\mathbf H^{p,q} = \{ \mathbb R x \in \mathbb P(\mathbb R^{p+q+1}) : \langle x,x \rangle_{p,q+1} < 0 \}.
\]
It is a pseudo-Riemannian manifold of signature $(p,q)$, and will be revisited in \cref{sec:equivariant_spacelike_embeddings}.
Here we shall define a class of representations into its isometry group, namely $\mathbf H^{p,q}$-convex-cocompact representations, focusing on the case of ones with maximal cohomological dimension.
The reason to consider the latter is twofold; namely they provide a rich source of examples of Anosov representations with Lipschitz limit set, and they are the union of connected components of representations of groups that are not necessarily fundamental groups of surfaces (see Theorem 1.3 in \cite{beyrer2023}).
\subsubsection[\texorpdfstring{$\mathbf H^{p,q}$- and $\mathrm{AdS^p}$-convex-cocompact representations}{Hp,q- and AdSp-convex-cocompact representations}]{$\mathbf H^{p,q}$- and $\mathrm{AdS^p}$-convex-cocompact representations}
Convex-cocompact subgroups of $\SO(n,1)$ are a well-studied class of discrete subgroups with rich geometric and dynamical properties.
Here they act as a starting point, since they are the class of subgroups considered by Sullivan in \cref{thm:sullivan}.
In \cite{mess2007lorentz}, \cite{merigot2012anosov} and \cite{barbot2015deformations}, Mess and later Barbot and Mérigot introduced a generalization of convex-cocompactness to the pseudo-Riemannian setting of Anti-de Sitter geometry.
With $\mathbf H^{p,q}$-convex-cocompact representations, Danciger, Guéritaud and Kassel extended this notion in \cite{danciger2018convex} to the more general setting of pseudo-Riemannian hyperbolic spaces $\mathbf H^{p,q}$.
\begin{definition}[Convex-cocompact representations]\label{def:convex_cocompact_representations}
    Let $\rho: \Gamma \to \SO(p,q+1)$ be a representation of a discrete group $\Gamma$ with finite kernel and discrete image.
    \begin{enumerate}[label=(\roman*)]
        \item $\rho$ is $\mathbf H^{p,q}$-convex-cocompact if $\rho(\Gamma)$ acts properly discontinuously and cocompactly on some closed properly convex (in the sense of $\mathbb P(\mathbb R^{p+q+1})$)subset 
        of $\mathbf H^{p,q}$ with nonempty interior.
        \item $\rho$ is $\mathrm{AdS}^p$-convex-cocompact if it is $\mathbf H^{p,1}$-convex-cocompact.
    \end{enumerate}
\end{definition}

For the proof of the following fact, we refer to Lemma 6.3 of \cite{danciger2017convex}. It will be used in \cref{prop:adjoint_representation_convex_cocompact} to show that the representations constructed there are $\mathbf H^{p,q}$-convex-cocompact only for forms of signature $(p,1)$.
\begin{fact}\label{fact:convex_cocompact_hyperbolic}
    If a group $\Gamma$ admits an irreducible $\mathbf H^{p,q}$-convex-cocompact representation, then $\Gamma$ is Gromov hyperbolic.
\end{fact}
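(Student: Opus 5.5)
The plan is to equip the convex set with a Hilbert metric, on which $\Gamma$ acts properly and cocompactly, and to prove that this metric space is Gromov hyperbolic. By the Švarc–Milnor lemma, $\Gamma$ is then quasi-isometric to it and hence hyperbolic. Hyperbolicity will come from ruling out fat triangles, which in turn reduces to excluding projective segments in the ideal boundary; irreducibility enters exactly there. Throughout, let $\rho:\Gamma\to\SO(p,q+1)$ be irreducible and $\mathbf H^{p,q}$-convex-cocompact, and let $\mathcal C\subseteq\mathbf H^{p,q}$ be the closed properly convex $\rho(\Gamma)$-invariant set with nonempty interior on which the action is cocompact.

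\emph{Step 1 (the ideal boundary).} Set $\partial_i\mathcal C=\overline{\mathcal C}\setminus\mathcal C$, the closure taken in $\mathbb P(\mathbb R^{p+q+1})$. Properness and cocompactness force $\partial_i\mathcal C\subseteq\partial\mathbf H^{p,q}$, the set of isotropic lines.

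\emph{Step 2 (transversality of ideal points).} I will show that any two distinct points $x,y\in\partial_i\mathcal C$ satisfy $\langle x,y\rangle\neq 0$. By convexity of $\overline{\mathcal C}$ and negativity of its interior, for lifts in a fixed lift of the cone one has $\langle x,y\rangle\le 0$, so it suffices to rule out equality.
\begin{itemize}
\item Suppose $\langle x,y\rangle=0$ for some $x\neq y$. Then the segment $[x,y]$ is totally isotropic and lies in $\partial_i\mathcal C$.
\item Consider the set of ideal points that are orthogonal to some other ideal point. This set is $\rho(\Gamma)$-invariant and closed, and a dynamics argument (pushing by elements that contract toward attracting ideal points) should make it all of $\partial_i\mathcal C$. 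The span of $\partial_i\mathcal C$ would then be a $\rho(\Gamma)$-invariant subspace on which the form degenerates.
\item Irreducibility forces this span to be either $0$ or all of $\mathbb R^{p+q+1}$. The case $0$ is impossible, since $\partial_i\mathcal C$ is nonempty for an infinite $\Gamma$. The whole space is then spanned by isotropic vectors pairwise pairing $\le 0$ with many zero pairings, and this should contradict $\mathcal C$ having nonempty interior in $\mathbf H^{p,q}$.
\end{itemize}
This is the delicate part. I expect the cleanest route is the one in \cite{danciger2017convex}: first show that irreducibility forces $\overline{\mathcal C}$ to contain no isotropic segment, arguing via the $\rho(\Gamma)$-invariant kernel $\bigcap_{z\in\partial_i\mathcal C} z^\perp$.

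\emph{Step 3 (Hilbert metric and Švarc–Milnor).} Enlarge $\mathcal C$ slightly to a properly convex open set $\Omega\supset\mathcal C$, or simply use the interior of $\mathcal C$; in either case $\Gamma$ acts properly and cocompactly on the convex hull of its orbit equipped with the Hilbert metric $d_\Omega$. Restricted to $\mathcal C$, $d_\Omega$ is proper and geodesic, with projective segments as geodesics, so Švarc–Milnor makes $\Gamma$ quasi-isometric to $(\mathcal C,d_\Omega)$.

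\emph{Step 4 (no fat triangles).} It remains to prove $(\mathcal C,d_\Omega)$ is Gromov hyperbolic, following Benoist's argument for divisible strictly convex sets. Suppose geodesic triangles with vertices in $\mathcal C$ are not uniformly thin.
\begin{itemize}
\item Pick triangles and points on one side at distance at least $n$ from the other two sides.
\item Translate by elements of $\rho(\Gamma)$, using cocompactness, so that these points lie in a fixed compact set.
\item Pass to a limit in the Hausdorff topology to obtain an ideal triangle one of whose sides is a nontrivial segment contained in $\partial_i\mathcal C$.
\end{itemize}
Such a segment has pairwise orthogonal endpoints, contradicting Step 2. Hence $\Gamma$ is hyperbolic. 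I expect Step 2, extracting the non-orthogonality from irreducibility, to be the main obstacle; Steps 3 and 4 are standard convex-projective arguments.
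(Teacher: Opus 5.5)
\textbf{There is a genuine gap, at the step you flagged yourself.} Your Step 2 (distinct points of $\partial_i\mathcal C$ are never orthogonal, i.e.\ $\partial_i\mathcal C$ contains no isotropic segment) is the entire content of the statement. The paper does not prove the Fact; it imports it as Lemma 6.3 of \cite{danciger2017convex}. Your sketch does not establish this step, for two concrete reasons. First, from ``every point of $\partial_i\mathcal C$ is orthogonal to some other point of $\partial_i\mathcal C$'' it does not follow that the form degenerates on $W=\mathrm{span}\,\partial_i\mathcal C$. Degeneracy means $W\cap W^\perp\neq 0$, i.e.\ a nonzero vector orthogonal to \emph{all} of $W$, and pairwise orthogonal pairs do not produce one. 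Second, your fallback via $\bigcap_{z\in\partial_i\mathcal C}z^\perp=W^\perp$ gives nothing. Irreducibility already forces $W$ to be the whole space, so this intersection is automatically $0$, and that says nothing about individual orthogonal pairs.

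\textbf{Cocompactness is missing from Step 2, and it cannot be dropped.} Take cocompact lattices $\Gamma_i<\SO_0(n_i,1)$ with $n_i\ge 2$. Let $\Gamma_1\times\Gamma_2$ act on $\mathbb R^{n_1,1}\otimes\mathbb R^{n_2,1}$ with the form $-(b_1\otimes b_2)$, which has signature $(n_1+n_2,\,n_1n_2+1)$.
\begin{itemize}
\item The action is irreducible, by Borel density and irreducibility of external tensor products.
\item It preserves the closed cone $K$ generated by the vectors $u\otimes v$ with $u,v$ in the closed future cones.
\item On $K$ one has $b_1\otimes b_2\ge 0$. At an interior point equality would be an interior minimum of a nondegenerate quadratic form, which is impossible for a nonzero vector.
\item Hence $\mathcal C=\mathbb P(K)\cap\mathbf H^{n_1+n_2,\,n_1n_2}$ is an invariant, closed, properly convex set with nonempty interior.
\end{itemize}
Yet $\partial_i\mathcal C$ contains the isotropic segments $\{[(su+s'u')\otimes v]: s,s'\ge 0\}$ with $u,u',v$ isotropic. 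The span of $\partial_i\mathcal C$ is the whole space, on which the form is nondegenerate, and $\Gamma_1\times\Gamma_2$ is not hyperbolic; the action simply fails to be cocompact. So your second and third bullets are false as stated. Any correct argument must derive the contradiction from cocompactness, and that argument is exactly what is missing.

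\textbf{A secondary issue in Steps 3 and 4.} Using the interior of $\mathcal C$ does not work in general. $\Gamma$ does not act cocompactly on $\mathrm{int}\,\mathcal C$, because the non-ideal boundary points of $\mathcal C$ lie at infinite Hilbert distance. You need a $\Gamma$-invariant properly convex open set $\Omega\supset\mathcal C$ with $\overline{\mathcal C}\cap\partial\Omega=\partial_i\mathcal C$. Building it, for instance by duality from $\partial_i\mathcal C$ (where irreducibility gives proper convexity), requires showing that $\langle w,z\rangle<0$ for all $w\in\mathcal C$ and $z\in\partial_i\mathcal C$, not only for interior $w$. That same property is what places the limiting segment of Step 4 in $\partial_i\mathcal C$ rather than merely in $\partial\Omega$.
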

\subsubsection[\texorpdfstring{$\mathbf H^{p,q}$-convex-cocompact representations of maximal virtual cohomological dimension}{Hp,q-convex-cocompact representations of maximal virtual cohomological dimension}]{$\mathbf H^{p,q}$-convex-cocompact representations of maximal virtual cohomological dimension}\label{sec:representations_maximal_dimension}
The next step will be to consider which of the above cases admit Lipschitz limit sets.
In \cite[Lemma 11.9.(1)]{danciger2017convex}, it is shown that the limit set of an $\mathbf H^{p,q}$-convex-cocompact representation is homeomorphic to a closed subset of the sphere $\mathbb S^{p-1}$.
Of special importance is the case where the limit set is the whole sphere, or equivalently (as shown in \cite[Corollary 11.10.(1)]{danciger2017convex}) when the group attains maximal virtual cohomological dimension.
\begin{definition}[Representations of maximal virtual cohomological dimension]\label{def:representations_maximal_dimension}
    An $\mathbf H^{p,q}$-convex-cocompact $\rho:\Gamma \to \SO(p,q+1)$ is said to be  of \emph{maximal virtual cohomological dimension} if either of the three equivalent conditions holds:
    \begin{enumerate}[label=(\roman*)]
        \item the virtual cohomological dimension $\mathrm{vcd}(\Gamma)$ of $\Gamma$ is equal to $p$,
        \item the limit set $\xi^1_\rho(\partial_\infty \Gamma)$ is homeomorphic to the sphere $\mathbb S^{p-1}$,
        \item the boundary $\partial_\infty \Gamma$ is homeomorphic to the sphere $\mathbb S^{p-1}$.
    \end{enumerate}
    If $\rho$ is $\mathbb H^{p-1,1}$-convex-cocompact of maximal virtual cohomological dimension, we shall follow \cite{monclair2023gromov} and call it \emph{$\mathrm{AdS}^p$-quasi-Fuchsian} (or \emph{$\mathrm{AdS}$-quasi-Fuchsian} when the dimension is clear from the context).
    If moreover $\rho$ preserves a totally geodesic copy of $\mathbb H^{p-1}$ in $\mathbb H^{p-1,1}$, we say that it is \emph{$\mathrm{AdS}^p$-Fuchsian}.
\end{definition}
For completeness purposes, we recall that the cohomological dimension of a group $\Gamma$ is the largest integer $n \in \mathbb N$ for which there exists a $\mathbb Z[\Gamma]$-module $M$ such that $H^n(\Gamma, M) \neq 0$.
If $\Gamma$ is virtually torsion-free (as is the case when $\Gamma$ is a finitely generated subgroup of a linear group, by Selberg's lemma), then all finite-index subgroups of $\Gamma$ have the same cohomological dimension.
We define the virtual cohomological dimension $\mathrm{vcd}(\Gamma)$ of $\Gamma$ to be this common cohomological dimension of any of its torsion-free finite-index subgroups.

A geometric characterization of $\mathbf H^{p,q}$-convex-cocompact representations of maximal virtual cohomological dimension is given in \cite[Corollary 1.14]{beyrer2023}, where it is shown that they are precisely the representations of Gromov hyperbolic groups with virtual cohomological dimension $p$ that act properly discontinuously and cocompactly on a $p$-dimensional spacelike graph of a Lipschitz map in the warped product coordinates (such as the ones given in \cref{lem:warped_product_structure}) of $\hat{\mathbf H}^{p,q}$.
One of our contributions is to provide examples of such representations, for which the spacelike graph is an embedding of a symmetric space (see \cref{prop:adjoint_representation_convex_cocompact}).

Along with $\Theta$-positive representations, representations of maximal virtual cohomological dimension constitute one of the main classes of Anosov representations whose limit set is Lipschitz.
One could argue that they are an especially rich source of examples since they form a union of connected components of the representation variety.
More concretely, the following is a direct consequence of Corollary 1.14 and Proposition 3.19 of \cite{beyrer2023}:
\begin{corollary}[Lipschitz limit set for representations of maximal virtual cohomological dimension]\label{cor:lipschitz_limit}
    The limit set of an $\mathbf H^{p,q}$-convex-cocompact representation of maximal virtual cohomological dimension is a Lipschitz submanifold of $\partial \mathbf H^{p,q}$, homeomorphic to~$\mathbb S^{p-1}$.
\end{corollary}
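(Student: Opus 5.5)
The corollary is obtained by combining two results of \cite{beyrer2023} with the characterization of maximal virtual cohomological dimension recalled in \cref{def:representations_maximal_dimension}. There are three steps: a global graph description, a Lipschitz condition on that graph, and then passing to the boundary.

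\emph{Step 1 (graph description).} Let $\rho:\Gamma\to\SO(p,q+1)$ be $\mathbf H^{p,q}$-convex-cocompact of maximal virtual cohomological dimension. By \cite[Corollary 1.14]{beyrer2023}, $\rho(\Gamma)$ acts properly discontinuously and cocompactly on a complete $p$-dimensional spacelike submanifold $M\subseteq\hat{\mathbf H}^{p,q}$. In warped product coordinates $\hat{\mathbf H}^{p,q}\cong \mathbf H^p\times\mathbb S^q$ (as in \cref{lem:warped_product_structure}), $M$ is the graph of a map $f:\mathbf H^p\to\mathbb S^q$. Compactifying $\mathbf H^p$ to the closed ball, the boundary $\partial\mathbf H^{p,q}$ acquires the model $(\mathbb S^{p-1}\times\mathbb S^q)/\pm$. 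The spacelike condition on $M$ says exactly that $f$ is strictly $1$-Lipschitz with respect to the hyperbolic metric on $\mathbf H^p$ and the round metric on $\mathbb S^q$.

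\emph{Step 2 (boundary extension).} Proposition 3.19 of \cite{beyrer2023} states that such a graph extends continuously to the boundary. Its boundary is the graph of a $1$-Lipschitz map $\partial f:\mathbb S^{p-1}\to\mathbb S^q$, where $\mathbb S^{p-1}$ carries the round metric, and this boundary graph coincides with the set of accumulation points of $M$ in $\partial\mathbf H^{p,q}$. The accumulation set of the cocompact orbit of $\rho(\Gamma)$ on $M$ is the limit set $\xi^1_\rho(\partial_\infty\Gamma)$. Hence $\xi^1_\rho(\partial_\infty\Gamma)$ is the graph of the Lipschitz map $\partial f$.

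\emph{Step 3 (conclusion).} A graph of a Lipschitz map from $\mathbb S^{p-1}$ is the image of a bi-Lipschitz embedding of $\mathbb S^{p-1}$ into $\mathbb S^{p-1}\times\mathbb S^q$. Its image in the smooth quotient $\partial\mathbf H^{p,q}$ is therefore a Lipschitz submanifold. It is homeomorphic to $\mathbb S^{p-1}$, which is also condition (ii) of \cref{def:representations_maximal_dimension}.

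The main obstacle is the identification in Step 2 of the boundary of $M$ with $\xi^1_\rho(\partial_\infty\Gamma)$, as opposed to some larger closed set. I would handle it using \cref{thm:boundary_map}(iii). For a quasi-geodesic $\gamma_n\to x$, the points $\rho(\gamma_n)o$ with $o\in M$ converge projectively to $\xi^1(x)$. Since the orbit $\rho(\Gamma)o$ is coarsely dense in $M$, every boundary point of $M$ arises this way.
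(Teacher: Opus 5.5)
Your route is the paper's. The corollary is stated there as a direct consequence of \cite[Corollary 1.14]{beyrer2023} and \cite[Proposition 3.19]{beyrer2023}, and your Steps 1--3 unpack exactly these two citations.

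One assertion in Step 1 is wrong, however. By \cref{lem:warped_product_structure}, a tangent vector $(w,\d f_u(w))$ to the graph is spacelike if and only if $|\d f_u(w)| < \frac{2}{1+|u|^2}\,|w|$. So $f$ is strictly $1$-Lipschitz for the round metric $\frac{4}{(1+|u|^2)^2}\,g_{\mathbb B^p}$ of a hemisphere, not for the hyperbolic metric. Equivalently, $\|\d f\| < 1/\cosh r$ for the hyperbolic metric, where $r$ is the distance to the center. This matters. A map that is merely $1$-Lipschitz for the hyperbolic metric need not extend continuously to $\partial\mathbb B^p$; it can keep winding around a great circle of $\mathbb S^q$ as $r\to\infty$. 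The hemisphere version does extend to a $1$-Lipschitz map of the closed hemisphere, whose restriction to the equator is your $\partial f$, and Step 2 rests on this.

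There are two smaller points. First, in Step 3 the covering $\mathbb S^{p-1}\times\mathbb S^q\to\partial\mathbf H^{p,q}$ must be injective on the graph for the image to be a submanifold. Condition (ii) does not give this by itself: for $p=2$, a two-to-one covering of a circle is topologically possible. Injectivity follows from \cref{thm:boundary_map}(iv). Suppose $\partial f(-u)=-\partial f(u)$. Then the $1$-Lipschitz inequalities at $u$ and at $-u$ force $d(\partial f(u'),\partial f(u))=d(u',u)$ for all $u'\neq\pm u$. That is, $\langle u'+\partial f(u'),u+\partial f(u)\rangle=0$, so two distinct limit points would be orthogonal. This is excluded because $\xi^{d-1}=(\xi^1)^\perp$ here.

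Second, applying \cref{thm:boundary_map}(iii) to $\mathbb R o$ requires $\langle o,\xi^1(y)\rangle\neq 0$ for the relevant $y$. This is not automatic for a negative line, so you have to check it for your choice of $o$.
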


\section{Hausdorff dimension equals critical exponent of the Falconer functional}\label{sec:main_result}
The goal of this section is to prove \cref{thm:main}, which states that under certain conditions, the Hausdorff dimension of the limit set of a projective Anosov representation is equal to the critical exponent of the Falconer functional.
The definitions of the functionals needed, along with some of their properties, are given in \cref{sec:falconer_functional}.
The strategy of the proof of the analogous result in \cite{pozzetti_anosov_2023} is sketched in \cref{sec:proof_strategy}.
In \cref{sec:proof_of_theorem} we provide the missing details for each of the three cases of the theorem.
\subsection{Falconer functional and unstable Jacobian}\label{sec:falconer_functional}
Ledrappier and Lessa introduced in \cite{ledrappier_dimension_2023} the Falconer functional, whose critical exponent provides a dynamical invariant that is (under certain assumptions, see \cref{thm:main}) equal to the Hausdorff dimension of the limit set of the representation.
With \cref{lem:functional_relations} we shall relate it to the unstable Jacobian, introduced in \cite[Theorem A]{pozzetti_anosov_2023}, in order to provide a proof for the lower bound of the Hausdorff dimension of the limit set.
\begin{definition}\index{Falconer!functional}\index{unstable Jacobian}\label{def:functionals}
    For $s \geq 0$ we define the \emph{Falconer functional} $F_s: \mathfrak a^+ \to \mathbb R_{\geq 0}$ as
    \[
        F_s(a) = \max_{0 \leq p \leq d-2} \left\{ \alpha_{1,2}(a) + \cdots + \alpha_{1,p+1}(a) + (s-p)\alpha_{1,p+2}(a) \right\}
    \]
    and for $p \in \llbracket 1, d-1 \rrbracket$ we define the \emph{unstable Jacobian} $J_p^u: \mathfrak a^+ \to \mathbb R$ as:
    \begin{align*}
    J_p^u &= (p+1)\omega_1 - \omega_{p+1} =
    \alpha_{1,2} + \cdots + \alpha_{1,p+1}.
    \end{align*}
\end{definition}

The critical exponent of a group is generalized into an invariant associated to a family of functionals, which captures the dynamical behaviour of the representation.
We shall refer to a functional $\phi: \mathfrak a^+ \to \mathbb R$ as \emph{positive} if it is positive in the interior of the Weyl chamber $\mathfrak a^+$.
\begin{definition}
    Let $(\phi_s: \mathfrak a^+ \to \mathbb R)_s$ be an increasing family of positive functionals indexed by $s > 0$ and $\rho: \Gamma \to \SL(d,\mathbb R)$ be a representation of a finitely generated discrete group $\Gamma$.
    We define the critical exponent $\delta_\phi(\rho)$ of $(\phi_s)_s$ with respect to $\rho$ as
    \[
    \delta_\phi(\rho) = \inf \left\{ s>0: \sum_{\gamma \in \Gamma} e^{-\phi_s(\mu(\rho(\gamma)))} < \infty \right\}.
    \]
\end{definition}
Straight from the definitions, we see that $F_s(a) = s \alpha_1(a)$ for a representation in $\SO(d-1,1)$, hence the critical exponent $\delta_F(\rho)$ of the Falconer functional coincides with the critical exponent $\delta_{\rho(\Gamma)}
$ of the group $\rho(\Gamma)$.
In light of this observation, one can see \cref{thm:main} as an analog of the result in \cite{sullivan1979density} to higher rank.

We end this section by comparing the critical exponents of the unstable Jacobian and the Falconer functional, which will be used when we describe the strategy of the proof in \cref{sec:proof_strategy}.
\begin{lemma}\label{lem:functional_relations}
    For $p \in \llbracket 1, d-1 \rrbracket$:
\[
\max \{p, p \delta_{J_p^u}(\rho) \} \geq \delta_F(\rho).
\]
\end{lemma}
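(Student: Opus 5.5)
The plan is to compare the two functionals pointwise on $\mathfrak a^+$ and then pass to critical exponents. Fix $p \in \llbracket 1, d-1 \rrbracket$ and $s > \max\{p, p\,\delta_{J_p^u}(\rho)\}$. I will show that the Poincaré series of $F_s$ converges. This gives $\delta_F(\rho) \leq s$, and letting $s$ decrease to $\max\{p, p\delta_{J_p^u}(\rho)\}$ yields the claim.

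The pointwise bound I aim for is
\[
F_s(a) \geq \frac{s}{p} J_p^u(a) \quad \text{for all } a \in \mathfrak a^+ \text{ and all } s \geq p.
\]
To get it, I pick the term with index $p-1$ in the maximum defining $F_s$; this index is admissible since $p-1 \le d-2$. That term equals
\[
\alpha_{1,2}(a)+\dots+\alpha_{1,p}(a) + (s-p+1)\alpha_{1,p+1}(a) = J_p^u(a) + (s-p)\alpha_{1,p+1}(a).
\]
On $\mathfrak a^+$ the roots satisfy $\alpha_{1,2} \le \alpha_{1,3} \le \dots \le \alpha_{1,p+1}$, all nonnegative. Hence $J_p^u(a) = \sum_{j=2}^{p+1}\alpha_{1,j}(a) \le p\,\alpha_{1,p+1}(a)$, that is, $\alpha_{1,p+1}(a) \geq J_p^u(a)/p$. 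Since $s-p \ge 0$, I get
\[
F_s(a) \ge J_p^u(a) + \frac{s-p}{p}J_p^u(a) = \frac{s}{p}J_p^u(a).
\]

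Next I pass to the series, and here I must read the family of functionals correctly. The critical exponent $\delta_{J_p^u}(\rho)$ refers to the family $(sJ_p^u)_s$, so that $\delta_{J_p^u}(\rho) = \inf\{t>0 : \sum_\gamma e^{-tJ_p^u(\mu(\rho(\gamma)))}<\infty\}$. Set $t = s/p$. Then $t > \delta_{J_p^u}(\rho)$, because $s > p\,\delta_{J_p^u}(\rho)$. The family $tJ_p^u$ is increasing in $t$, since $J_p^u \ge 0$ on $\mathfrak a^+$. So convergence at some $t' \in (\delta_{J_p^u}(\rho), t)$ implies convergence at $t$. Combining with the pointwise bound,
\[
\sum_\gamma e^{-F_s(\mu(\rho(\gamma)))} \le \sum_\gamma e^{-\frac{s}{p}J_p^u(\mu(\rho(\gamma)))} < \infty,
\]
and therefore $\delta_F(\rho) \le s$.

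The edge cases need a short check. If $\delta_{J_p^u}(\rho) = \infty$ the statement is vacuous. The term $p$ inside the max is exactly what is needed to ensure $s - p \ge 0$ in the pointwise step, so no further case analysis should arise.

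The only point I expect could cause trouble is the convention for $\delta_{J_p^u}$: whether the paper normalizes the family as $sJ_p^u$ or in some other way. I would check this against its use in \cref{sec:proof_strategy}. If the normalization differs, for example $J_p^u$ scaled by $p$, the factor $p$ in the statement would shift accordingly, but the same pointwise inequality handles it.
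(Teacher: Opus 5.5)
Your proposal is correct and follows the paper's proof essentially step for step. You select the index-$(p-1)$ term of the maximum and rewrite it as $J_p^u+(s-p)\alpha_{1,p+1}$. You then use $p\,\alpha_{1,p+1}\ge J_p^u$ on $\mathfrak a^+$ to get $F_s\ge \frac{s}{p}J_p^u$ for $s\ge p$, and conclude by comparing Poincaré series with $t=s/p>\delta_{J_p^u}(\rho)$. Your normalization of $\delta_{J_p^u}$ via the family $(sJ_p^u)_s$ matches the paper's usage, and you spell out the root-ordering justification that the paper leaves implicit.
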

\begin{proof}
It will be convenient for us to introduce the following family of auxiliary  functionals $\Psi_s^p: \mathfrak a^+ \to \mathbb R$ for $p \in \llbracket 0, d-2 \rrbracket$ and $s \in \mathbb R$, given by
\[
\Psi_s^p = 
    \alpha_{1,2} + \cdots + \alpha_{1,p+1} + (s - p)\alpha_{1,p+2}
\]
which give an expression for the Falconer functional that will be easier to handle:
\[
F_s(a) = \max_{p \in \llbracket 0, d-2 \rrbracket} \Psi_s^p(a) = \Psi_s^{p_0}(a) \text{ for } s \in [p_0, p_0 + 1], \text{ for all } a \in \mathfrak a^+
\]
The equality $F_s(a) = \Psi_s^{p_0}(a)$ for $s \in [p_0, p_0 + 1]$ follows from the definitions.
The next step is to show that for all $s \geq p, p \in \llbracket 1, d-1 \rrbracket$, we have 
\[
F_s(a) \geq \Psi_s^{p-1}(a) \geq \frac{s}{p} J_p^u(a),
\]
which is obtained by:
\begin{align*}
    F_s \geq \Psi_s^{p - 1} &=
    \alpha_{1,2} + \cdots + \alpha_{1,p} + (s-(p-1)) {\alpha_{1,p+1}}\\
    &= \alpha_{1,2} + \cdots + \alpha_{1,p} + \alpha_{1,p+1} + (s-p) {\alpha_{1,p+1}}\\
    &= J_p^u + \left(\frac{s}{p}-1\right) \cdot p \ {\alpha_{1,p+1}}\\
    &\geq
    J_p^u + \left(\frac{s}{p}-1\right) J_p^u =
    \frac{s}{p} J_{p}^u.
\end{align*}
Thus, for $s > \max\{ p, p\ \delta_{J_p^u}(\rho) \}$, we have that the Poincaré series of $F_s$ converges, giving us the wanted result.
\end{proof}

\subsection{Patterson--Sullivan measures}\label{sec:ps_measures}
To obtain the original result of \cite{sullivan1979density}, Sullivan constructs a family of measures supported on the limit set, indexed by the points of the hyperbolic space, that measure the density of the limit set at infinity, as seen from each point of $\mathbb H^n$.
This construction was generalized in \cite{quint2002mesures} to construct measures on flag varieties of higher rank Lie groups, and is one of the main tools used in \cite{pozzetti_anosov_2023}.

\begin{definition}[Quint--Patterson--Sullivan measure]\index{Patterson--Sullivan measure}\label{def:patterson-sullivan}
    For a discrete subgroup $H < \PSL(d, \mathbb R)$ and a functional $\phi \in (\mathfrak a_\Theta)^*$,
    an $(H, \phi)$-Patterson--Sullivan measure is a finite Radon measure $\mu$ on $\mathcal F_\Theta$ such that for every $h \in H$
    \[
        \frac{\d h_* \mu}{\d\mu}(x) = e^{-\phi(b_\Theta(h^{-1},x))}, \text{ for all } x \in \mathcal F_\Theta(\mathbb R^d).
    \]
\end{definition}
The Busemann cocycle $b_\Theta: \PSL(d,\mathbb R) \times \mathcal F_\Theta \to \mathfrak a_\Theta$ featured in the above definition will not be used directly in our proof, so we refer the reader to \cite{pozzetti_anosov_2023} for its definition.

The following notion of irreducibility for measures on flag varieties is phrased in terms of representations in \cite{pozzetti_anosov_2023}:
\begin{definition}[Irreducible measure]\index{irreducible representation!with respect to measure}
    \label{def:mu_irreducible}
Let $\Theta \subseteq \Pi$ and $\mu$ be a measure on $\mathcal F_\Theta(\mathbb R^d)$.
We say that $\mu$ is irreducible if there is no element in $\mathcal F_{i \Theta}(\mathbb R^d)$ whose annihilator is of full measure, i.e.\ for all $y \in \mathcal F_{i \Theta}(\mathbb R^d)$:
\[
    \mu(\mathrm{Ann}(y)) < \mu(\mathcal F_{\Theta}(\mathbb R^d)),
\]
where the annihilator $\mathrm{Ann}(y)$ of $y \in \mathcal F_{i \Theta}(\mathbb R^d)$ is defined as
\[
\mathrm{Ann}(y) = \{ x \in \mathcal F_\Theta(\mathbb R^d) : x \text{ is not transverse to } y \}.
\]
\end{definition}
The reason to consider $(\rho(\Gamma), \phi)$-Patterson--Sullivan measures that are irreducible is that they admit a family $\{U_\gamma\}_{\gamma \in \Gamma}$ of open sets whose mass is proportional to $e^{-\phi(\mu(\rho(\gamma)))}$ (see \cref{lem:measure_estimation}).

\subsection[\texorpdfstring{Sketch of the proof of \cref{thm:main}}{Sketch of the proof of Theorem 2}]{Sketch of the proof of \cref{thm:main}}\label{sec:proof_strategy}
Since our main result is a modification of the one appearing in \cite{pozzetti_anosov_2023}, we begin by outlining the proof of \cite[Theorem A]{pozzetti_anosov_2023}.
We take care in pointing out the part that relies on \cite[Proposition 10.3]{labourie_anosov_2006} and \cite[Lemma 6.8]{pozzetti_anosov_2023}, which are the places around which we need to modify the proof to obtain \cref{thm:main}.

For the proof of the upper bound of \cref{thm:main}, we refer to \cite[Theorem 3.1]{pozzetti_anosov_2023}, where the authors follow a classical recipe to obtain the upper bound of the Hausdorff dimension.
More precisely, they cover the limit set with projective ellipsoids, and then each ellipsoid by balls of different radii.
The inequality then follows because the Hausdorff content of this covering is dominated by the Dirichlet series of the Falconer functional $F$.
We would like to stress that the only part of the hypothesis used in the upper bound is that the representation is projective Anosov, meaning that it still holds without requiring that $\xi^1(\partial_\infty \Gamma)$ is a Lipschitz submanifold of $\mathbb P(\mathbb R^d)$ or any of the additional assumptions of \cref{thm:main}.

denoting by $d_\rho$ the dimension of the limit set, \cref{lem:functional_relations} yields $\delta_F(\rho) \leq \max\{d_\rho, d_\rho \delta_{J_{d_\rho}^u}(\rho)\}$, so that the proof of the lower bound can be reduced to the inequality
\begin{align}\label{eq:lower_bound}
    \delta_{J^u_{d_\rho}}(\rho) \leq 1.\tag{LB}
\end{align}
For this, one employs the method of Patterson--Sullivan--Quint, which is based on finding a $(J^u_{d_\rho}, \rho(\Gamma))$-Patterson--Sullivan measure $\nu$ and a collection of open sets $\{U_\gamma\}_{\gamma \in \Gamma}$ whose mass is approximately given by $e^{-J_{d_\rho}^u(\mu(\rho(\gamma)))}$, and that do not intersect too much, in the sense that
\begin{align*}\label{eq:GoodMeasure}
    \nu(U_\gamma) \geq C_\nu e^{-J_{d_\rho}^u(\mu(\rho(\gamma)))} \text{ and } 
    M \equaldef \sup_{n \in \mathbb N}
    \max
    \left\{\sharp A : A \subseteq \Gamma_n,  \bigcap_{\gamma \in A}U_\gamma \neq \emptyset \right\} < \infty\tag{MP}
\end{align*}
holds, where $\Gamma_n = \{ \gamma \in \Gamma : |\gamma| = n \} $, and $C_\nu$ is a positive constant.
Having established the existence of such a measure, we can obtain \cref{eq:lower_bound} by first bounding uniformly in $n$:
\begin{align*}
    \sum_{\gamma \in \Gamma_n}
    e^{-J_{d_\rho}^u(\mu(\rho(\gamma)))} \leq
    C_\nu \sum_{\gamma \in \Gamma_n}
    \nu(U_\gamma) \leq M C_\nu\ \nu(\mathcal F_{\{ 1, d-1\}}(\mathbb R^d)) < \infty.
\end{align*}
Combining this with the bound implied by the Anosov property of $\rho$
\begin{align*}
J_{d_\rho}(\mu(\rho(\gamma))) &\geq \alpha_{12} (\mu(\rho(\gamma))) \geq C|\gamma| - b
\end{align*}
one concludes that (up to a multiplicative constant)
\begin{align*}
\sum_{\gamma \in \Gamma} e^{-(s+1)J^u_{d_\rho}(\mu(\rho(\gamma)))} &=
\sum_{n\geq 0} \sum_{\gamma \in \Gamma_n} e^{-s J^u_{d_\rho}(\mu(\rho(\gamma)))}
    e^{-J^u_{d_\rho}(\mu(\rho(\gamma)))}\\
    &\leq
    M C_\nu\ \nu(\mathcal F_{\{ 1, d-1\}}(\mathbb R^d)) \
\sum_{n\geq 0} e^{-s(Cn - b)} < \infty,
\end{align*}
for $s > 0$, which implies \cref{eq:lower_bound}.

While a cover satisfying the intersection property on the right hand side of \cref{eq:GoodMeasure} can be constructed using the Anosov property, the estimation of $\nu(U_\gamma)$ is more delicate.
Using the properties of a Busemann cocycle, one can show that it is implied by the irreducibility of~$\nu$:
\begin{lemma}[Lemma 5.15 of \cite{pozzetti_anosov_2023}]\label{lem:measure_estimation}
    Let $\Theta \subseteq \Pi$ and consider a functional $\phi \in (\mathfrak a_\Theta)^*$, and a representation $\rho: \Gamma \to \SL(d,\mathbb R)$.
    If $\nu^\phi$ is an irreducible $(\rho(\Gamma), \phi)$-Patterson--Sullivan measure over $\mathcal F_\Theta(\mathbb R^d)$, then there exists $\alpha_0 >0$ such that for all $\alpha \in (0, \alpha_0)$ there exists a family of open sets $\{U_{\gamma, \alpha}\}_{\gamma \in \Gamma}$ satisfying 
    \[
    c e^{-\phi(\mu(\rho(\gamma)))} \leq \nu^\phi(U_{\gamma, \alpha}) \leq C e^{-\phi(\mu(\rho(\gamma)))} \text{ for all } \gamma \in \Gamma,
    \]
    where $c, C > 0$ are constants independent of $\gamma$.

\end{lemma}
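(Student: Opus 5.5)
We follow the standard shadow-lemma strategy of Quint, adapted to a Busemann cocycle on $\mathcal F_\Theta$. For $\gamma\in\Gamma$ write the Cartan decomposition $\rho(\gamma)=k_\gamma e^{\mu(\rho(\gamma))}l_\gamma$. Let $P_\Theta^-\in\mathcal F_{\iota\Theta}$ be the standard flag opposite to the attracting flag $P_\Theta=k\cdot(\text{standard }\Theta\text{-flag})|_{k=\mathrm{id}}$. For $\alpha>0$ define the ``shadow''
\[
U_{\gamma,\alpha}=\rho(\gamma)\cdot\{x\in\mathcal F_\Theta : d(x, \mathrm{Ann}(l_\gamma^{-1}P^-_{\Theta}))>\alpha\},
\]
where $l_\gamma^{-1}P^-_\Theta$ is the repelling $\iota\Theta$-flag of $\rho(\gamma)$ (which we may take in $\mathcal F_{\iota\Theta}$), and $d$ is a fixed $K$-invariant Riemannian distance. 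Equivalently, $U_{\gamma,\alpha}$ is the image under $\rho(\gamma)$ of the set of flags that are $\alpha$-transverse to the repelling flag.

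\textbf{Step 1 (cocycle estimate).} We first show that for $x$ that is $\alpha$-transverse to $l_\gamma^{-1}P^-_\Theta$, the Busemann cocycle satisfies
\[
\|b_\Theta(\rho(\gamma),x)-\mu_\Theta(\rho(\gamma))\|\le C_\alpha,
\]
with $C_\alpha$ depending only on $\alpha$. This is the standard linear-algebra fact that for a fixed transversality margin, $\log\|\wedge^{k}g\,v\|/\|v\|$ differs from $\omega_k(\mu(g))$ by a bounded amount, applied to each $k\in\Theta$ via the fundamental weights. We then use the Radon–Nikodym identity in \cref{def:patterson-sullivan}:
\[
\nu^\phi(U_{\gamma,\alpha})=\int_{\{x\ \alpha\text{-transverse}\}} e^{-\phi(b_\Theta(\rho(\gamma),x))}\,d\nu^\phi(x),
\]
after rewriting $\rho(\gamma)_*$ via the cocycle relation. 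Substituting the bound gives
\[
e^{-\phi(\mu(\rho(\gamma)))}e^{-\|\phi\|C_\alpha}\,\nu^\phi(\{x\ \alpha\text{-transverse to }l_\gamma^{-1}P^-_\Theta\})
\]
as a lower bound and $e^{-\phi(\mu(\rho(\gamma)))}e^{\|\phi\|C_\alpha}\nu^\phi(\mathcal F_\Theta)$ as an upper bound. The upper bound is immediate, since $\nu^\phi$ is finite.

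\textbf{Step 2 (uniform mass of transverse sets; the main obstacle).} It remains to find $\alpha_0$ such that for $\alpha<\alpha_0$,
\[
\inf_{y\in\mathcal F_{\iota\Theta}}\nu^\phi\big(\{x: d(x,\mathrm{Ann}(y))>\alpha\}\big)\ge c_0>0.
\]
This is where irreducibility enters. Since $\mathcal F_\Theta\setminus \mathrm{Ann}(y)=\bigcup_\alpha\{d(x,\mathrm{Ann}(y))>\alpha\}$, irreducibility gives, for each $y$, a positive mass for some $\alpha_y$. To make this uniform, we argue by contradiction using compactness of $\mathcal F_{\iota\Theta}$. Suppose $y_n\to y$ and $\alpha_n\to0$ with masses tending to $0$. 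Fix $\beta$ with $m:=\nu^\phi(\{d(\cdot,\mathrm{Ann}(y))>2\beta\})>0$. Because $y\mapsto\mathrm{Ann}(y)$ is continuous in the Hausdorff topology (it is a $K$-translate of a fixed Schubert variety, and $K$ acts isometrically), for $n$ large we have $\{d(\cdot,\mathrm{Ann}(y))>2\beta\}\subseteq\{d(\cdot,\mathrm{Ann}(y_n))>\beta\}$. Hence the mass of the latter set is at least $m$ for $n$ large, which contradicts the assumption once $\alpha_n<\beta$. This yields $c_0$, and by monotonicity in $\alpha$ the bound holds for all $\alpha<\alpha_0:=\beta$.

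Combining Steps 1 and 2 gives the claimed bounds, with $c=c_0e^{-\|\phi\|C_\alpha}$ and $C=\nu^\phi(\mathcal F_\Theta)e^{\|\phi\|C_\alpha}$. The delicate part is confirming the Hausdorff continuity of $\mathrm{Ann}$ and the precise form of the cocycle estimate. We would take the latter from \cite{pozzetti_anosov_2023} rather than rederive it.
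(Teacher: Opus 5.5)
Your proposal is correct and is essentially the argument the paper defers to (Lemma 5.15 of \cite{pozzetti_anosov_2023}, which the paper summarizes as following from Busemann cocycle properties plus irreducibility). You bound $\|b_\Theta(\rho(\gamma),x)-\mu_\Theta(\rho(\gamma))\|\le C_\alpha$ on flags $\alpha$-transverse to the repelling flag, and a compactness argument over $\mathcal F_{\iota\Theta}$ turns irreducibility into a uniform positive lower bound on the mass of $\alpha$-transverse sets. One thing to tidy: $\beta$ is produced inside the contradiction argument, so the conclusion should read ``there is some $\alpha_0>0$ with $\inf_{y}\nu^\phi(\{d(\cdot,\mathrm{Ann}(y))>\alpha_0\})>0$'', followed by monotonicity in $\alpha$, rather than $\alpha_0:=\beta$.
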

To construct a $(J_{d_\rho}, \rho(\Gamma))$-Patterson--Sullivan measure, one may proceed as in \cite[Section 2]{pozzetti_anosov_2023}, and define the section $\zeta: \xi^1(\partial_\infty \Gamma) \to \mathcal F_{\{ \alpha_1, \alpha_{d_\rho + 1}\}}(\mathbb R^d)$ of the projection $\mathcal F_{\{ \alpha_1, \alpha_{d_\rho + 1}\}}(\mathbb R^d) \to \mathbb P(\mathbb R^d)$ by 
\[
\zeta(\xi^1(x)) = (\xi^1(x), T_{\xi^1(x)} \xi^1(\partial_\infty \Gamma)),
\]
where $T_{\xi^1(x)} \xi^1(\partial_\infty \Gamma)$ is the tangent space to $\xi^1(\partial_\infty \Gamma)$ at $\xi^1(x)$.
It is almost-everywhere defined due to the Lipschitz regularity of $\xi^1(\partial_\infty \Gamma)$, and is $\rho$-equivariant.
Integrating a Lebesgue volume form over every tangent space $T_{\xi^1(x)} \xi^1(\partial_\infty \Gamma)$, one obtains a measure on $\xi^1(\partial_\infty \Gamma)$, which can be pushed forward to a measure $\nu$ on $\zeta(\partial_\infty \Gamma)$, thus obtaining a $(J_{d_\rho}, \rho(\Gamma))$-Patterson--Sullivan measure.

However, contrary to one's intuition and to the assertion of \cite[Lemma 6.8]{pozzetti_anosov_2023}, the strong irreducibility of $\rho$ and the fact that $\nu$ is supported on the image of the equivariant section $\zeta$ do not imply that $\nu$ is irreducible (see \cref{sec:adjoint_representations} for a family of counterexamples).
More specifically, in the proof of this lemma, the authors argue by contradiction that if the result is not true, then there exist subspaces $W_0 \in \mathcal G_{d_\rho + 1}$ and $V \in \mathcal G_{d-d_\rho - 1}$ such that
\[
\rho(\gamma) V \cap W_0 \neq 0
\]
for all $\gamma \in \Gamma$.
Indeed, if $\rho$ is not $\mu$-irreducible, then there exist $(W_0, P_0) \in \mathcal F_{{\alpha}_{d-d_\rho - 1}, {\alpha}_{d-1}}$ such that $\mathrm{Ann}(W_0,P_0)$ is of full $\mu$ measure.
In fact, the quasi-equivariance of $\mu$ along with the dynamics of $\rho$ imply that one can choose $W_0, P_0$ such that $P_0 \in \xi^{d-1}_\rho(\partial_\infty \Gamma)$, meaning that for $\mu$-almost every $(\xi^1_\rho(x), \zeta(x)^{d_\rho + 1}) \in \zeta(\partial_\infty \Gamma)$, we have that $\zeta(x)^{d_\rho + 1} \cap W_0 \neq 0$.
Again using the $\rho$-equivariance of $\zeta$, we have that for all $\gamma \in \Gamma$, the set 
$\left\{ \zeta(x) \in \zeta(\partial_\infty \Gamma) : \zeta(x)^{d_\rho + 1} \cap \rho(\gamma) W_0 \neq 0 \right\}$
is of full measure which implies that their intersection is non-empty, and we can take $V = \zeta(x)^{d_\rho+1}$ for any $x \in \partial_\infty \Gamma$ such that $\zeta(x)$ lies in this intersection.

While the arguments until this point are valid, to conclude, the authors assert that this contradicts strong irreducibility by using \cite[Proposition 10.3]{labourie_anosov_2006}, which claims that if  the identity component $G_0$ of an algebraic group $G$ acts irreducibly on $\mathbb R^d$, then it can move any pair of subspaces of complementary dimensions into transverse position, i.e.\ for all $V \in \mathcal G_k$ and $W \in \mathcal G_{d-k}$, there exists $g \in G_0$ such that $gV \cap W = 0$.
However, this is false (see \cref{sec:adjoint_representations} and \cref{sec:so_p_p_action} for counterexamples) and consequently the claim that $\nu$ is irreducible need not be true.

Nevertheless, in \cref{sec:critical_exponent_calculation} we   calculate the dimension of the respective limit set and the critical exponent of the Falconer functional for the counterexamples of \cref{sec:adjoint_representations}, and observe that they do coincide. Hence they do not provide a counterexample to the main result found in \cite{pozzetti_anosov_2023}, so it is reasonable to expect that the main result of \cite{pozzetti_anosov_2023} is still valid (at least) under stronger assumptions.
In the following subsections we show that such assumptions can be Zariski-density in $\SL(d, \mathbb R)$ or $\SO(p,q)$ with $p \neq q$, or being $\mathrm{AdS}$-quasi-Fuchsian, which are all sufficient to ensure that the measure $\nu$ is irreducible and thus that the lower bound of \cref{thm:main} holds.

\subsection{\texorpdfstring{Proof of \cref{thm:main}}{Proof of Theorem \ref{thm:main}}}\label{sec:proof_of_theorem}
Here we finish the proof of \cref{thm:main} by considering distinct cases of representations which can move subspaces of complementary dimensions into transverse position.
An evident candidate is the class of representations with Zariski-dense image in $\SL(d, \mathbb R)$, considered in \cref{sec:density_in_sl}.
A more subtle case to consider in the same spirit is the case of representations with Zariski-dense image in $\SO(p,q)$, with $p \neq q$, which we show to be appropriate in \cref{sec:density_in_so}.
Finally, in \cref{sec:ads_quasi_fuchsian}, we use the classification of $\mathrm{AdS}$-quasi-Fuchsian representations in \cite{glorieux2018regularity} to recover the result for this specific case as well.

\subsubsection[\texorpdfstring{Case of Zariski density in $\SL(d,\mathbb R)$}{Case of Zariski density in SL(d,\mathbb R)}]{Case of Zariski density in $\SL(d,\mathbb R)$}\label{sec:density_in_sl}
Having outlined the proof of \cref{thm:main}, it is now rather quick to deduce that for a Zariski-dense representation $\rho: \Gamma \to \SL(d, \mathbb R)$, the \cref{eq:lower_bound} and hence the result of the theorem holds.
The missing ingredient is the following analogue of \cite[Lemma 6.8]{pozzetti_anosov_2023} for Zariski-dense representations:
\begin{lemma}[Zariski-dense representations in $\mathrm{SL}(d,\mathbb R)$ are $\mu$-irreducible]\label{lem:ZariskiDense}
    Let $\rho: \Gamma \to \SL(d,\mathbb R)$ be a linear representation and $\Theta \subseteq \Pi$.
    If $\rho(\Gamma)$ is Zariski-dense in $\SL (d, \mathbb R)$, then any $\rho(\Gamma)$-quasi-invariant measure $\mu$ on $\mathcal F_\Theta(\mathbb R^d)$ is irreducible. In particular any $(\rho(\Gamma), \phi)$-Patterson--Sullivan measure is irreducible.
\end{lemma}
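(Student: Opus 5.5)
We argue by contradiction: suppose $\mu$ is a $\rho(\Gamma)$-quasi-invariant measure on $\mathcal F_\Theta(\mathbb R^d)$ and that there is $y_0 \in \mathcal F_{\iota\Theta}(\mathbb R^d)$ with $\mu(\mathrm{Ann}(y_0)) = \mu(\mathcal F_\Theta(\mathbb R^d))$. The key observation is that $\mathrm{Ann}(y)$ is the zero set of a polynomial condition in $y$. Transversality of a flag $x\in\mathcal F_\Theta$ to $y\in\mathcal F_{\iota\Theta}$ means $x^{\theta}\cap y^{d-\theta} = 0$ for every $\theta\in\Theta$, i.e.\ the nonvanishing of the product over $\theta$ of the determinants $\det(x^\theta \wedge y^{d-\theta})$ in Plücker coordinates. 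Hence
\[
Z = \{ g \in \SL(d,\mathbb R) : \mu(\mathrm{Ann}(g y_0)) = \mu(\mathcal F_\Theta(\mathbb R^d)) \}
\]
is the natural object to study, and the goal is to show that $Z$ contains a Zariski-closed set containing $\rho(\Gamma)$ but not all of $\SL(d,\mathbb R)$.

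First, quasi-invariance gives $\rho(\Gamma)\subseteq Z$. Indeed, $\rho(\gamma)_*\mu$ and $\mu$ have the same null sets, and $\mathrm{Ann}(\rho(\gamma)y_0) = \rho(\gamma)\mathrm{Ann}(y_0)$, whose complement has measure $\rho(\gamma)^{-1}_*\mu(\mathcal F_\Theta\setminus \mathrm{Ann}(y_0)) = 0$.

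Second, we replace $Z$ by an algebraic set. For each $x\in \mathcal F_\Theta$, the set $A_x = \{ g : x \in \mathrm{Ann}(g y_0)\}$ is Zariski closed in $\SL(d,\mathbb R)$, being the vanishing locus of the product of the polynomials $g\mapsto \det(x^\theta\wedge g y_0^{d-\theta})$. Set $B=\{x : x\notin \mathrm{Ann}(\rho(\gamma)y_0) \text{ for some } \gamma\}$, which is a countable union of null sets and hence null. Choose any $x_0$ outside $B$; this is possible since $\mu\neq 0$. Then $\rho(\Gamma)\subseteq A_{x_0}$, so by Zariski density $A_{x_0}=\SL(d,\mathbb R)$. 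This says that $g y_0$ fails to be transverse to $x_0$ for every $g\in\SL(d,\mathbb R)$.

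Third, this contradicts the transitivity of $\SL(d,\mathbb R)$ on $\mathcal F_{\iota\Theta}$: there exists a flag transverse to $x_0$ (extend bases of $x_0$ to a complementary adapted basis), and it has the form $g y_0$ for some $g$. This contradiction proves irreducibility. For Patterson--Sullivan measures, the Radon--Nikodym cocycle in \cref{def:patterson-sullivan} is a positive function, so such measures are quasi-invariant and the last assertion follows.

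The step needing the most care is the second, namely the passage from ``full measure for each $\gamma$'' to a single point $x_0$ working for all $\gamma$ simultaneously. This relies on $\Gamma$ being countable and on the polynomial nature of the non-transversality condition in $g$ for fixed $x$. Both are unproblematic here, and this is exactly where the full group $\SL(d,\mathbb R)$ is used, in contrast with the failure of \cite[Proposition 10.3]{labourie_anosov_2006}.
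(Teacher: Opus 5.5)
Your proof is correct, but it takes a different route from the paper's.

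\textbf{The paper's route.} The paper works entirely in the flag variety. Quasi-invariance makes $\supp\mu$ a closed $\rho(\Gamma)$-invariant set. So its Zariski closure is invariant under the Zariski closure of $\rho(\Gamma)$, which is $\SL(d,\mathbb R)$, and by transitivity it equals $\mathcal F_\Theta(\mathbb R^d)$. A Zariski-dense support cannot lie in the proper subvariety $\mathrm{Ann}(y)$. Since the complement of $\mathrm{Ann}(y)$ is open and meets $\supp\mu$, it has positive mass.

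\textbf{Your route.} You work in the group instead. Countability of $\Gamma$ gives you a single point $x_0\in\bigcap_\gamma\mathrm{Ann}(\rho(\gamma)y_0)$. You then use that non-transversality to $g y_0$ is a polynomial condition in $g$. Zariski density therefore forces $x_0$ to be non-transverse to the whole orbit $\SL(d,\mathbb R)\cdot y_0$, which contradicts transitivity on $\mathcal F_{\iota\Theta}(\mathbb R^d)$.

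\textbf{What each buys.} The paper's argument needs no countability. It also gives the stronger conclusion that $\supp\mu$ is Zariski dense, so the complement of every proper subvariety has positive mass. Yours is exactly the argument of \cite[Lemma 6.8]{pozzetti_anosov_2023} (find a common point in countably many full-measure sets, then move subspaces into transverse position), with the false \cite[Proposition 10.3]{labourie_anosov_2006} replaced by Zariski density. This makes it transparent where the stronger hypothesis enters.

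\textbf{Two small remarks.} First, the set $Z$ you introduce at the start plays no real role. The work is done by $A_{x_0}$, which need not be contained in $Z$, so your stated goal of finding a Zariski-closed subset of $Z$ is not what you actually prove. Second, countability costs nothing even for an uncountable $\Gamma$. In each degree, finitely many points of $\rho(\Gamma)$ already cut out the polynomials of that degree vanishing on $\rho(\Gamma)$. So some countable subgroup $\Gamma'\leq\Gamma$ has $\rho(\Gamma')$ Zariski dense, and $\mu$ is still $\rho(\Gamma')$-quasi-invariant.
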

\begin{proof}
    Since $\mu$ is quasi-invariant, for all $\rho(\gamma) \in \rho(\Gamma)$:
    \[
    \supp \mu = \supp \rho(\gamma)_* \mu = \rho(\gamma) \supp \mu.
    \]
    Thus the support of $\mu$ is $\rho(\Gamma)$-invariant: $\rho(\Gamma) \supp \mu = \supp \mu$.
    Taking the Zariski-closure of both sets, we see that $\mathrm{supp}\mu$ is Zariski dense because its Zariski-closure is $\SL(d, \mathbb R)$-invariant. In particular, $\mathrm{supp}(\mu)$ cannot be contained in $\mathrm{Ann}(y)$ for any $y \in \mathcal F_{\iota \Theta}(\mathbb R^d)$, because $\mathrm{Ann}(y)$ is a proper algebraic subvariety of $\mathcal F_\Theta(\mathbb R^d)$.
\end{proof}

\subsubsection[\texorpdfstring{Linear algebraic lemma in $\SO(p,q)$}{Linear algebraic lemma in SO(p,q)}]{Linear algebraic lemma in $\SO(p,q)$}\label{sec:linear_algebra}
In this subsection we prove \cref{lem:negative_determinant}, used in \cref{sec:density_in_so} to show that the Zariski-density in $\mathrm{SO}(p,q)$ is enough to give us the main result, when $p \neq q$.
More concretely, the lemma ensures that under minimal conditions, the stabilizer of a subspace in $\OO(p,q)$ contains an element of negative determinant.
With this, we obtain in \cref{lem:transverse_via_O} that given any two subspaces of complementary dimensions, one can move to a subspace  transverse to the other using a transformation in $\SO(p,q)$.

\begin{restatable}[Stabilizer with negative determinant]{lemma}{negativeDeterminant}\label{lem:negative_determinant}
    Let $V$ be a $k$-dimensional subspace of $\mathbb R^{p,q}$.
    If any of the following three conditions is satisfied, 
    \begin{enumerate}
        \item $V$ is non-degenerate,
        \item $V$ is degenerate but not maximal isotropic,
        \item $p \neq q$,
    \end{enumerate}
    then there exist transformations that stabilize $V$ and have determinants $\pm 1$, i.e.
    \[
        \mathrm{St}_{O(p,q)}(V) \cap \mathrm{SO}(p,q), \mathrm{St}_{O(p,q)}(V) \cap \OO^-(p,q) \neq \emptyset.
    \]
\end{restatable}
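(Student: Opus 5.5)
The plan is to produce, in each case, a single orthogonal reflection that stabilizes $V$. Since the identity always lies in $\mathrm{St}_{\OO(p,q)}(V)\cap\SO(p,q)$, only an element of determinant $-1$ is needed. For a non-isotropic vector $w\in\mathbb R^{p,q}$, i.e.\ $\langle w,w\rangle\neq 0$, consider the reflection
\[
r_w(x)=x-2\frac{\langle x,w\rangle}{\langle w,w\rangle}\,w .
\]
It lies in $\OO(p,q)$ and has determinant $-1$, because it is $-1$ on $\mathbb R w$ and the identity on the non-degenerate hyperplane $w^\perp$. Two elementary observations do the work:
\begin{enumerate}
\item if $w\in V$, then $r_w(V)\subseteq V$, since $r_w(x)-x\in\mathbb R w$;
\item if $w\in V^\perp$, then $r_w$ fixes $V$ pointwise.
\end{enumerate}
So it suffices to find a non-isotropic vector in $V\cup V^\perp$.

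The key step is to show that such a vector fails to exist only in the excluded situation. Suppose every vector of $V$ and every vector of $V^\perp$ is isotropic. By polarization both subspaces are then totally isotropic. Total isotropy of $V$ means $V\subseteq V^\perp$. Total isotropy of $V^\perp$, together with non-degeneracy of the form (so $(V^\perp)^\perp=V$), gives $V^\perp\subseteq V$. Hence $V=V^\perp$, and the dimension count $\dim V+\dim V^\perp=p+q$ forces $\dim V=(p+q)/2$. Since a totally isotropic subspace has dimension at most $\min\{p,q\}$, this requires $p=q$ and $V$ maximal isotropic.

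Finally, I would check each hypothesis of \cref{lem:negative_determinant} against this dichotomy.
\begin{enumerate}
\item If $V$ is non-degenerate, then either $V\neq 0$ is not totally isotropic, or $V=0$ and $V^\perp=\mathbb R^{p,q}$ contains non-isotropic vectors.
\item If $V$ is degenerate but not maximal isotropic, then $V=V^\perp$ is impossible, because $V=V^\perp$ would make $V$ totally isotropic of dimension $(p+q)/2$, hence maximal isotropic.
\item If $p\neq q$, then no subspace satisfies $V=V^\perp$, by the dimension count above.
\end{enumerate}
In every case a suitable $w$ exists, and $r_w\in\mathrm{St}_{\OO(p,q)}(V)\cap\OO^-(p,q)$.

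I expect no serious obstacle. The only points needing care are the polarization step (a subspace all of whose vectors are isotropic is totally isotropic) and the use of $(V^\perp)^\perp=V$, which relies on the form being non-degenerate.
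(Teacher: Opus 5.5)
Your proof is correct, and it is organized differently from the paper's. The paper proceeds case by case along a Witt-type decomposition:
\begin{itemize}
\item for non-degenerate $V$ it changes signs in an adapted basis of $V$;
\item for degenerate but not totally isotropic $V$ it acts by $\OO(V_-)$ (or $\OO(V_+)$) on a definite piece of $V=V_-\oplus V_0\oplus V_+$;
\item for totally isotropic $V$ (non-maximal, or maximal with $p\neq q$) it inductively builds an isotropic complement $W$ with $V\oplus W$ non-degenerate. It then uses an element of determinant $-1$ in $\OO\bigl((V\oplus W)^\perp\bigr)$, a space which is non-trivial because $2\dim V<p+q$.
\end{itemize}
You collapse all of this into one dichotomy. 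A reflection $r_w$ in a non-isotropic $w\in V\cup V^\perp$ stabilizes $V$, and such a $w$ can fail to exist only when $V$ and $V^\perp$ are both totally isotropic, i.e.\ $V=V^\perp$.

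Since the paper's $(V\oplus W)^\perp$ lies inside $V^\perp$, its construction of $W$ is in effect a way of certifying that $V^\perp$ contains a non-isotropic vector. Your use of $(V^\perp)^\perp=V$ gets this directly and spares the inductive hyperbolic-pair construction. Your formulation also shows the hypothesis is sharp: the only obstruction is $V=V^\perp$, and there the conclusion genuinely fails. Indeed, in a hyperbolic basis adapted to $V\oplus W$, every element of $\OO(p,p)$ stabilizing $V$ has the form $\begin{pmatrix} A & \ast \\ 0 & (A^t)^{-1}\end{pmatrix}$, so it has determinant $1$.

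The paper's route exhibits larger subgroups of the stabilizer, namely full orthogonal groups of non-degenerate summands. However, a single element of determinant $-1$ is all that \cref{lem:transverse_via_O} uses.
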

\begin{proof}
    \begin{enumerate}
        \item If $V$ is non-degenerate, in an appropriate basis the form is given by the matrix $I_{p,q}$, where $(p,q)$ is its signature.
        Then, any transformation with diagonal entries $\pm 1$ belongs to $\OO(V)$, and clearly can be chosen to have either positive or negative determinant, which can be preserved after extending to the whole space $\mathbb R^{p,q}$.
        \item If $V$ is degenerate but not maximal isotropic, we distinguish two subscases, whether $V$ is totally isotropic, or not.
        \begin{enumerate}[label=(\roman*)]
            \item If $V$ is totally isotropic, there exists a totally isotropic subspace $W$ transverse to $V$, with the same dimension as $V$, and such that the form is non-degenerate when restricted to $V \oplus W$.
        To see this, we can proceed by induction on $k = \dim V$.
        If $k = 1$ and $V= \mathbb R e_1$, non-degeneracy of the form implies that there exists some $e_1' \in V^c$ such that $\langle e_1', e_1 \rangle = 1$.
        Then $f_1 = e_1' - (\langle e_1', e_1' \rangle/2) e_1$ is isotropic, does not belong to $V$ and satisfies $\langle f_1, e_1 \rangle = 1$.
        If $k > 1$, assume that $e_1, \cdots, e_k$ is a basis of $V \cap V^\perp$, and $f_1, \cdots, f_{k-1}$ span an isotropic subspace $W_{k-1}$ such that the form has matrix
        \[
        \begin{pmatrix}
        0 & I_{k-1} \\
        I_{k-1} & 0
        \end{pmatrix}
        \]
        in the basis $e_1, \cdots, e_{k-1}, f_1, \cdots, f_{k-1}$.
        Then there exists some $e_k' \in V^c$ such that $\langle e_k', e_j \rangle = \langle e_k', f_j \rangle = 0$ for $j = 1, \cdots, k-1$ and $\langle e_k', e_k \rangle = 1$.
        Letting $f_k = e_k' - (\langle e_k', e_k' \rangle/2) e_k$, we have that $f_k$ is isotropic and satisfies $\langle f_k, e_j \rangle = \delta_{kj}$ for $j = 1, \cdots, k$ and $\langle f_k, f_j \rangle = 0$ for $j = 1, \cdots, k-1$.

        Since we have assumed that $V$ is not maximal isotropic, we have $\dim(V) = \dim(W) < \min\{p,q\}$, so that $\dim(V \oplus W) < p + q$.
        In particular, the subspace $(V \oplus W)^\perp$ is non-trivial, and non-degenerate.
        In a basis adapted to the splitting $\mathbb R^{p,q} = V \oplus W \oplus (V \oplus W)^\perp$, we may consider any transformation $g = I_{\dim(V)} \oplus I_{\dim(W)} \oplus B$, with $B \in \OO((V \oplus W)^\perp)$.
        By the remark at the beginning of the proof, we can choose $B$ to have either positive or negative determinant, and $g$ stabilizes $V$ by construction.
        \item If $V$ is not totally isotropic, we let $V_\pm$ be two maximal positive and negative definite subspaces of $V$ respectively, and $V_0$ be a maximal isotropic subspace of $V$ such that $V = V_- \oplus V_0 \oplus V_+$.
        Without loss of generality, we can assume that $\dim V_- > 0$, treating the case $\dim V_+ > 0$ in the same way.
        Any transformation of the form $g = A \oplus I_{\dim V_0} \oplus I_{\dim V_+}$ with $A \in \OO(V_-)$ lies in $\mathrm{St}_{\OO(p,q)}(V)$.
        Again by the remark at the beginning of the proof, we can choose $A$ to have either positive or negative determinant.
        \end{enumerate}
        
        \item If $p \neq q$, we only need to consider the case where $V$ is a maximal isotropic subspace, because the other cases are covered by the previous two items.
        In this case, we proceed as in the second one, and find a transverse isotropic subspace $W$ of the same dimension as $V$ such that the form is non-degenerate when restricted to $V \oplus W$.
        Because $p \neq q$, we have again that $\dim(V \oplus W) = 2 \min\{p,q\} < p + q$, so that $(V \oplus W)^\perp$ is non-trivial and non-degenerate.
        Then we conclude as in the first item, by considering transformations of the form $g = I_{\dim(V)} \oplus I_{\dim(W)} \oplus B$, with $B \in \OO((V \oplus W)^\perp)$, and choosing $B$ to have either positive or negative determinant.
    \end{enumerate}

\end{proof}

\subsubsection[Case of Zariski density in SO(p,q)]{Case of Zariski density in $\SO(p,q)$}\label{sec:density_in_so}
The goal of this section is to show that if we assume (\ref{item:main_desnse_so}) that the representation is Zariski-dense in $\SO(p,q)$ with $p \neq q$, then the proof of the lower bound for the dimension of the limit set in \cite{pozzetti_anosov_2023} is still valid.
In light of \cref{sec:proof_strategy}, this will follow from an analogue of \cite[Proposition 10.3]{labourie_anosov_2006} for Zariski-dense representations in $\SO(p,q)$.
To obtain this, we will use \cref{lem:negative_determinant}.

The only ingredient missing to obtain the result in the case of a Zariski-dense representation in $\SO(p,q)$ with $p \neq q$ is the following lemma.
\begin{lemma}\label{lem:transverse_via_O}
    Let $k+l \leq d, p+q = d, p\neq q, C \in \mathcal G_k(\mathbb R^{p,q}), B \in \mathcal G_l(\mathbb R^{p,q})$.
    Then there exists some $g \in \SO(p,q)$ such that $gC \cap B = \{0\}$.
\end{lemma}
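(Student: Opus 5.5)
The plan is to prove the statement in two steps. First, find some $g \in \OO(p,q)$ with $gC \cap B = \{0\}$. Second, correct the determinant using \cref{lem:negative_determinant}, which is where the hypothesis $p \neq q$ enters.

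\emph{Reductions.} If $k + l < d$, enlarge $B$ to a subspace $B' \supseteq B$ of dimension $d - k$. Any $g$ with $gC \cap B' = \{0\}$ also satisfies $gC \cap B = \{0\}$, so we may assume $k + l = d$.

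\emph{Determinant correction.} Suppose $g \in \OO(p,q)$ satisfies $gC \cap B = \{0\}$. If $\det g = 1$ we are done. Otherwise, since $p \neq q$, the third item of \cref{lem:negative_determinant} gives $h \in \mathrm{St}_{\OO(p,q)}(C) \cap \OO^-(p,q)$. Then $gh \in \SO(p,q)$ and $ghC = gC$, so $gh$ works. Thus the whole difficulty lies in the first step, which is a statement about the full orthogonal group. There is no parity obstruction for $\OO(p,q)$; the counterexample of \cref{ex:labourie} lives in $\SO(p,p)$.

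\emph{Transversality in $\OO(p,q)$.} By Witt's extension theorem, the $\OO(p,q)$-orbit of $C$ consists of all $k$-dimensional subspaces $C'$ for which $\langle\cdot,\cdot\rangle|_{C'}$ is isometric to $\langle\cdot,\cdot\rangle|_C$. So it suffices to build a $k$-dimensional subspace $C'$ transverse to $B$ carrying a prescribed Gram matrix $G$, namely the Gram matrix of the form on $C$ in a basis $c_1,\dots,c_k$ adapted to the decomposition into positive, negative and radical parts.

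I would construct $c'_1,\dots,c'_k$ inductively. At stage $i$, I need a vector $v$ satisfying:
\begin{itemize}
  \item the linear conditions $\langle v, c'_j\rangle = G_{ij}$ for $j<i$;
  \item the quadratic condition $\langle v,v\rangle = G_{ii}$;
  \item the avoidance condition $v \notin B + \mathrm{span}(c'_1,\dots,c'_{i-1})$.
\end{itemize}
The last subspace has dimension $l+i-1 < d$. The linear conditions cut out an affine subspace $A_i$ of dimension $d-i+1$; this requires $c'_1,\dots,c'_{i-1}$ to be linearly independent, which holds by construction. The solution set is then a quadric hypersurface $Q_i \subset A_i$.

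\emph{Main obstacle.} The key point, and the step I expect to be hardest, is to show that $Q_i$ is nonempty and not contained in the proper subspace $B + \mathrm{span}(c'_{<i})$. This can fail for careless choices of the earlier vectors. A typical bad case is $C'_{i-1}$ totally isotropic and $A_i$ lying inside its orthogonal complement, where the quadric degenerates.

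To control this, I would order the basis so that the nondegenerate (positive and negative) vectors are placed first. For the radical vectors, I would follow the hyperbolic-pair construction in the proof of \cref{lem:negative_determinant}: choose each new isotropic vector together with an auxiliary dual vector, which keeps enough freedom at every stage.

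The intended argument is then a Zariski-density one: the restricted quadratic form on $A_i$ has rank large enough that $Q_i$ spans $A_i$ affinely. The set of $v \in Q_i$ lying in a subspace of dimension $< d$ that does not contain $A_i$ is then a proper closed subset of $Q_i$, so a generic $v \in Q_i$ works.

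If this inductive bookkeeping becomes messy, my fallback is a dimension count. I would compare the dimension of the orbit $\OO(p,q)\cdot C$ with the dimension of the Schubert-type variety $\{C' : C' \cap B \neq 0\}$, which is a proper algebraic subvariety of $\mathcal G_k$. The aim would be to show that the orbit is not contained in it, using that this variety has codimension $1$ while the orbit is not contained in any such Schubert divisor, by exhibiting one explicit transverse representative for each isometry type.
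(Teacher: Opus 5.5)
Your reduction to $k+l=d$, the determinant correction via the third item of \cref{lem:negative_determinant}, and the Witt reduction to finding a $C'$ of the same isometry type as $C$ with $C'\cap B=\{0\}$ are all correct and match the paper. The gap is that this last step, which is the real content of the lemma, is never carried out.

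Your genericity argument works only if the affine space $A_i$ is not contained in $W_i=B+\mathrm{span}(c'_1,\dots,c'_{i-1})$. You do not say which conditions on the earlier choices would guarantee this, nor prove that such conditions can be met at every stage. The failure is real. Take $\mathbb R^{3,2}$ with orthonormal basis $e_1,e_2,e_3,f_1,f_2$ (the $f_j$ negative), let $C$ be a totally isotropic plane, and let $B=\mathrm{span}(e_2,e_3,f_2)$. The isotropic vector $c'_1=e_1+f_1$ satisfies all your stage-one conditions, since it is isotropic and not in $B$. Yet $A_2=(c'_1)^\perp=B+\mathbb R c'_1=W_2$, so every choice of $c'_2$ gives $\mathrm{span}(c'_1,c'_2)\cap B\neq\{0\}$.

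The fallback does not close the gap either. Nothing prevents an orbit from lying inside a Schubert divisor; \cref{ex:labourie} exhibits an $\SO(p,p)$-orbit doing exactly that. Moreover, ``exhibiting a transverse representative for each isometry type'' means exhibiting one transverse to the given $B$, which is the statement itself.

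The missing idea is the paper's: induct on $b=\dim(C\cap C^\perp)$ and remove one radical direction at a time by passing to a quotient.
\begin{itemize}
\item If $C$ is nondegenerate, its $\OO(p,q)$-orbit is open in the Grassmannian, because signature is locally constant. A small perturbation then makes it transverse to $B$.
\item If $B$ is nondegenerate, perturb $B$ instead, since $gC\cap B=\{0\}$ if and only if $C\cap g^{-1}B=\{0\}$.
\item Otherwise, take $e\in B\cap B^\perp$ nonzero, $e'$ with $\langle e,e'\rangle=1$, and $v=e'-\tfrac12\langle e',e'\rangle e$. Then $v$ is isotropic with $\langle e,v\rangle=1$, hence $v\notin B$ and $v\notin B^\perp$.
\end{itemize}
In the last case, the image $\tilde B$ of $B\cap v^\perp$ in $v^\perp/\mathbb R v\cong\mathbb R^{p-1,q-1}$ has dimension $l-1$. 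Induction gives $\tilde C$ of type $(a,b-1,c)$ transverse to $\tilde B$. Its preimage $C'\subseteq v^\perp$ has type $(a,b,c)$ and satisfies $C'\cap B\subseteq\mathbb R v\cap B=\{0\}$.

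The requirement $v\notin B^\perp$ is exactly the invariant your vector-by-vector scheme lacks. The bad choice $c'_1=e_1+f_1$ above lies in $B^\perp=\mathrm{span}(e_1,f_1)$.
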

\begin{proof}
    By \cref{lem:negative_determinant}, there exists some transformation $h \in \OO(p,q)$ with determinant $\det(h) = -1$ and which stabilizes $C$.
    It suffices now to show that we may also find some $g \in \OO(p,q)$ such that $g C \cap B = \{0\}$.
    Indeed, then both $g$ and $gh$ are in $\OO(p,q)$ and move $C$ to a subspace transverse to $B$.
    On the other hand they have opposite determinants. so one of them must be in $\SO(p,q)$.

    For a subspace $C' \in \mathcal G_k(\mathbb R^{p,q})$, we let $a, c$ be the dimensions of any of its maximal positive and negative subspaces of $C$ respectively, and $b = \mathrm{dim}(C' \cap (C')^\perp)$.
    By Witt's theorem, it suffices to show the existence of some subspace $C' \in \mathcal G_k(\mathbb R^{p,q})$ that is transverse to $B$ and for which $(a_{C'}, b_{C'}, c_{C'}) = (a_C, b_C, c_C)$.
    We will proceed by induction on $b_C$.

    If $b_C = 0$, then $C$ is non-degenerate and we can perturb it into a subspace transverse to $B$ without changing the signature.
    Clearly the same argument treats the case where $B$ is non-degenerate.
    
    Assuming now neither $B$ nor $C$ is non-degenerate, we claim that there exists some $v \in B^c$ that is isotropic and not contained in $B^\perp$.
    Indeed, let $e_1 \in B \cap B^\perp$ be a nonzero totally isotropic vector.
    Because $\mathbb R^{p,q}$ is non-degenerate, there exists some $e_1' \in \mathbb R^{p,q}$ for which $\langle e_1, e_1' \rangle = 1$.
    Then the vector $v = e_1' - \lambda e_1$ is isotropic for $\lambda = \langle e_1', e_1' \rangle/2$, and is not contained in $B^\perp$ since $\langle e_1, v \rangle = 1$.

    Fixing such an isotropic vector $v \in B^c \cap (B^\perp)^c$, we consider the projection $p: v^\perp \to v^\perp/\mathbb R v$, which is form-preserving.
    Here $v^\perp$ stands for the subspace of vectors orthogonal to $\mathbb Rv$, and $v^\perp/ \mathbb Rv$ is equipped with the induced non-degenerate form of signature $(p-1, q-1)$.
    Then $\tilde B = p(B \cap v^\perp) \in G_{l-1}(v^\perp/ \mathbb Rv)$ because:
    \[
    \dim(\tilde B) = \dim(B \cap v^\perp) - \dim(B \cap \mathbb Rv) = \dim(B \cap v^\perp) = l-1.
    \]
    Since $(k-1) + (l-1) \leq d-2$, the induction hypothesis implies that there exists some $\tilde C \in \mathcal G_{k-1}(v^\perp/\mathbb Rv)$ that is transverse to $\tilde B$ and has $(a_{\tilde C}, b_{\tilde C}, c_{\tilde C}) = (a_{C}, b_{C}-1, c_{C})$.
    We let $C' = p^{-1}(\tilde C)$, and we claim that it is transverse to $B$ and satisfies $(a_{C'}, b_{C'}, c_{C'}) = (a_C, b_C, c_C)$.
    
    To see that $C'$ is transverse to $B$, we note that $C' \cap B = C' \cap B \cap v^\perp \subseteq p^{-1}(\tilde C \cap \tilde B) = \mathbb Rv$, but no non-zero element of $\mathbb Rv$ is contained in $B$ by construction.
    On the other hand, let $\tilde u_1, \cdots u_{a_C}, \tilde v_1, \cdots, \tilde v_{b_C-1}, \tilde w_1, \cdots, \tilde w_{c_C}$ be a basis of $\tilde C$ such that the form is positive definite on the span of the $\tilde u_i$'s, negative definite on the span of the $\tilde w_i$'s, and zero on the span of the $\tilde v_i$'s.
    Letting $u_i, v_i, w_i$ be any nonzero lifts of $\tilde u_i, \tilde v_i, \tilde w_i$ to $C'$, we can build a basis 
    \[
    u_1, \cdots u_{a_C}, v, v_1, \cdots, v_{b_C-1}, w_1, \cdots, w_{c_C} \text{ of } C',
    \]
    for which the form is positive definite on the span of the $u_i$'s, negative definite on the span of the $w_i$'s, and zero on the span of the $v_i$'s and $v$.
\end{proof}
\subsubsection{Case of AdS-quasi-Fuchsian representations}\label{sec:ads_quasi_fuchsian}
We will now see how the final assumption of \cref{thm:main} is sufficient to avoid the use of \cite[Proposition 10.3]{labourie_anosov_2006} in the proof of the bound \ref{eq:lower_bound}.
For this, we only need to use the following result of \cite{glorieux2018regularity} which shows that strongly irreducible AdS-quasi-Fuchsian representations are Zariski-dense in $\SO(p,2)$.
\begin{proposition}[Proposition 1.4 in \cite{glorieux2018regularity}]
    Let $\rho: \Gamma \to \SO(p,2)$ be an AdS-quasi-Fuchsian representation that does not preserve any totally geodesic copy of $\mathbb H^p$ in $\mathrm{AdS}^{p}$.
    Then $\rho(\Gamma)$ is Zariski-dense in $\SO(p,2)$.
\end{proposition}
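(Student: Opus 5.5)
The plan is to study the identity component $H$ of the Zariski closure of $\rho(\Gamma)$ in $\SO(p,2)$ and rule out every proper possibility. The input is the geometry of the limit set. Since $\rho$ is $\mathrm{AdS}$-quasi-Fuchsian, it is projective Anosov, $\partial_\infty\Gamma\cong\mathbb S^{p-1}$, and $\Lambda=\xi^1(\partial_\infty\Gamma)$ is an acausal Lipschitz $(p-1)$-sphere in $\partial\mathrm{AdS}^{p+1}=\mathrm{Ein}^{p,1}$. By \cref{thm:boundary_map}(iv), any two distinct points of $\Lambda$ span a non-degenerate plane, i.e.\ $\langle x,y\rangle\neq0$ for representative vectors of distinct points. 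Passing to a finite-index subgroup changes neither the Zariski closure's identity component nor $\Lambda$, so we may assume $\rho(\Gamma)\subseteq H$.

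\emph{Step 1 (reducible case).} Suppose $H$ preserves a proper nonzero subspace $W$. If $W\cap W^\perp\neq0$, this isotropic subspace is $H$-invariant. I would then use the dynamics of proximal elements, or the north--south dynamics of $\rho(\Gamma)$ on $\Lambda$ given by \cref{thm:boundary_map}(iii), to get a contradiction: an invariant isotropic subspace forces either a global fixed point of $\rho(\Gamma)$ in $\Lambda$, which is impossible for a non-elementary hyperbolic group, or all attracting lines lying in one totally isotropic subspace, which contradicts acausality. So $W$ is non-degenerate, and $\mathbb R^{p,2}=W\oplus W^\perp$ with both summands invariant. The attracting line of each proximal element lies in $W$ or in $W^\perp$, and by density of attracting fixed points and connectedness, $\Lambda$ lies in $\mathbb P(W)$, say. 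Next I would run a signature count. $\Lambda$ is a $(p-1)$-sphere of isotropic lines in $\mathbb P(W)$ that are pairwise non-orthogonal, and this should force $W$ to have signature $(p,1)$. Then $W^\perp$ is a negative line, and $\rho$ preserves the totally geodesic copy $\mathbb P(W)\cap\mathrm{AdS}\cong\mathbb H^p$, which is excluded by hypothesis. The signature count is where I would argue via the projection of $\Lambda$ to a maximal positive subspace: an acausal sphere projects injectively, which forces $\dim W\ge p+1$.

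\emph{Step 2 (irreducible case).} Now $H$ is a connected algebraic subgroup acting irreducibly on $\mathbb R^{p,2}$, hence semisimple and noncompact, since it contains the proximal elements $\rho(\gamma)$. I would invoke the classification of connected irreducible subgroups of $\SO(p,2)$ (real rank at most $2$). Apart from $\SO_0(p,2)$ itself, the only candidates are $\SU(k,1)$ and $\mathrm{U}(k,1)$ in $\SO(2k,2)$, acting on $\mathbb C^{k,1}\cong\mathbb R^{2k,2}$. The case to exclude is therefore $p=2k$ with $H\subseteq\mathrm{U}(k,1)$. In that case $\Lambda$ lies in the closed $H$-orbit of real isotropic lines coming from complex isotropic lines, which is the $(2k-1)$-sphere $\partial\mathbf H^k_\mathbb C$. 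Since $\Lambda$ is also a topological $(2k-1)$-sphere, invariance of domain gives $\Lambda=\partial\mathbf H^k_\mathbb C$. But the real form is $\mathrm{Re}\,h$, and $\partial\mathbf H^k_\mathbb C$ contains distinct points $x,y$ with $h(x,y)\in i\mathbb R\setminus\{0\}$. For instance, take $y=e^{i\theta}$-twisted points on the same complex chain, or rather two points whose Hermitian product has been rotated by an element of the centre-free stabilizer. Such points give $\langle x,y\rangle=\mathrm{Re}\,h(x,y)=0$, contradicting the transversality of the boundary map. Hence $H=\SO_0(p,2)$.

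The main obstacle I expect is Step 2. It requires pinning down the classification of irreducible subgroups of $\SO(p,2)$ correctly, and for small $p$ handling exceptional isogenies and low-dimensional coincidences separately, for example $p=2,3,4$. A secondary difficulty is making the signature argument in Step 1 rigorous. For that I would rely on the fact that an acausal $(p-1)$-sphere in $\mathrm{Ein}^{p,1}$ is a graph over $\mathbb S^{p-1}$ in warped-product coordinates, cf.\ \cref{lem:warped_product_structure}.
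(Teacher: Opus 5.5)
The paper gives no proof of this statement; it is imported from \cite{glorieux2018regularity}, so your outline has to stand on its own. The architecture is reasonable: reducible implies an invariant $\mathbb H^p$, and in the irreducible case you run through the connected irreducible subgroups of $\SO(p,2)$. However, Step~2 does not work as written.

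\textbf{Step 2, the unitary case.} The claim that $\Lambda$ lies in a closed $(2k-1)$-dimensional $H$-orbit ``equal to'' $\partial\mathbf H^k_{\mathbb C}$ is false. Since $\mathrm{Re}\,h(v,v)=h(v,v)$, every real isotropic line lies in a complex isotropic line. So the real isotropic lines form the whole $2k$-dimensional space of isotropic lines of $\mathbb R^{2k,2}$. This is a circle bundle over $\partial\mathbf H^k_{\mathbb C}$, and $\mathrm U(k,1)$ acts transitively on it. Transversality only says that $\mathbb Rv\mapsto\mathbb Cv$ is injective on $\Lambda$, because two real lines in one complex isotropic line are $\mathrm{Re}\,h$-orthogonal. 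So $\Lambda$ is the image of a continuous section of this bundle. Your final step of ``rotating'' two points until $h(x,y)\in i\mathbb R$ is unavailable, because that section is fixed.

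In fact, for $k=1$ there are pairwise transverse sections. This is the case $p=2$, where $\mathrm U(1,1)$ does act irreducibly on $\mathbb R^{2,2}$. With $h(z,w)=\bar z_1w_2+\bar z_2w_1$, the lines $\mathbb R\,e^{i\arctan t}(1,it)$ for $t\in\mathbb R$, together with $\mathbb R(0,1)$, satisfy $\mathrm{Re}\,h(x,y)\neq 0$ for all distinct $x,y$.

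The fix uses a tool you already mention. Every element of $\mathrm U(k,1)$ is $\mathbb C$-linear, so its realification has eigenvalues in conjugate pairs, with real ones of even multiplicity. Hence it is never proximal in $\mathbb P(\mathbb R^{2k+2})$. This contradicts $\rho$ being projective Anosov on a finite-index subgroup.

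\textbf{Step 2, the list of candidates.} Your list is incomplete. For $p=3$, $\SO_0(2,1)$ acting on $\mathrm{Sym}^2_0(\mathbb R^{2,1})\cong\mathbb R^{3,2}$ is irreducible and \emph{does} contain proximal elements. It therefore needs its own argument. For instance, a finite-index subgroup of $\rho(\Gamma)$ would be a discrete subgroup of $\PSL(2,\mathbb R)$, so of virtual cohomological dimension at most $2$. But $\mathrm{vcd}(\Gamma)=3$ and $\ker\rho$ is finite. Also, irreducibility only makes $H$ reductive, not semisimple, as $\mathrm U(k,1)$ itself shows.

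\textbf{Step 1, the degenerate case.} Let $U$ be an invariant totally isotropic subspace. North--south dynamics together with minimality of $\Gamma$ on $\partial_\infty\Gamma$ give a dichotomy: either $\Lambda\subset\mathbb P(U)$ or $U\perp\Lambda$. You only treat the first. The second, which is the generic case, is not a fixed point in $\Lambda$. It is excluded as follows. First, $\Lambda\cap\mathbb P(U)$ has at most one point. Second, by transversality, $\Lambda\setminus\mathbb P(U)$ injects continuously into the isotropic lines of $U^\perp/U$. These form a $(p-2)$-sphere if $\dim U=1$ and are empty if $\dim U=2$, so no such injection exists.

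\textbf{Step 1, the non-degenerate case.} Your projection argument only gives $\dim W\ge p+1$, which leaves signature $(p-1,2)$ open. For $p\ge 3$ this can be excluded topologically. For $p=2$, however, it is realised by a cocompact Fuchsian group in $\SO(1,2)$ fixing a positive line, whose limit set is a pairwise transverse timelike circle. So acausality must be used here, not just transversality. Take the maximal positive subspace $E=E_W\oplus E_{W^\perp}$ adapted to the splitting. The acausal sphere $\Lambda$ is a graph over, and hence projects onto, the whole unit sphere of $E$. Its image also lies in the unit sphere of $E_W$. Hence $\dim E_W=p$, and $W$ has signature $(p,1)$.
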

Hence, the result of \cref{sec:density_in_so} may be used to show that the lower bound \cref{eq:lower_bound} holds, when $p \neq 2$.
However, the theorem has been proven for $p=2$ in \cite[Theorem 1.6 and paragraphs following Question 1.4]{glorieux_hausdorff_2023}. 
There, they argue that for $\mathbf H^{p,q+1}$-convex-cocompact representations, the Hausdorff dimension of $\xi^1(\partial_\infty \Gamma)$ is the same as the Hausdorff dimension of $(\xi^1, \xi^{d-1})(\partial_\infty \Gamma)$, so their Theorem 1.1 implies that the critical exponent of the functional $\alpha_{1,2} = J^u_1$ is equal to the Hausdorff dimension $\dim_\mathcal H(\xi^1(\partial_\infty \Gamma))$ of the limit set.
The latter is contained in the boundary of $\mathrm{AdS}^2$, so it is at most 1.

\section{Equivariant spacelike embeddings}\label{sec:equivariant_spacelike_embeddings}
In this section, we give a counterexample to \cite[Lemma 6.8]{pozzetti_anosov_2023} and \cite[Proposition 10.3]{labourie_anosov_2006} that appear in the proof of \cite[Theorem A]{pozzetti_anosov_2023}.
Using the theory of Hermitian forms recalled in \cref{sec:hermitian_forms}, we prove in \cref{sec:adjoint_representations} that the adjoint representation of the automorphism group of a non-degenerate Hermitian form over any of these fields gives rise to a spacelike embedding of the corresponding symmetric space into $\mathbf H^{p',q'-1}$ for some $p',q'$.
In the case of forms with signature $(n,1)$, the resulting representation is $\mathbf H^{p',q'-1}$-convex-cocompact of maximal virtual cohomological dimension (in particular it is $P_1$-Anosov).
Thus its limit set is a Lipschitz (even smooth for our examples) sphere of dimension $p'-1$, which places us under the setting of \cite{pozzetti_anosov_2023}.
We also give a proof that the tangent spaces of the limit set intersect non-trivially, giving us a counterexample to \cite[Lemma 6.8]{pozzetti_anosov_2023} and \cite[Proposition 10.3]{labourie_anosov_2006}.
However, in \cref{sec:critical_exponent}, we show that the equality of the critical exponent of the Falconer functional and the Hausdorff dimension still holds, meaning that this family of examples does not contradict the main result of \cite{pozzetti_anosov_2023}.

\subsection{Pseudo-Riemannian hyperbolic spaces}\label{sec:pseudo_riemannian_hyperbolic_spaces}
A vast class of examples where our results apply and counterexamples arise is that of representations into the isometry group of a pseudo-Riemannian hyperbolic space $\mathbf H^{p,q}$.
We begin by recalling their definition and a useful coordinate system for its two-fold cover $\hat{\mathbf H}^{p,q}$.
Using these coordinates, we provide a criterion to ensure that a spacelike immersion is actually an embedding. This will be used in \cref{prop:adjoint_representation_convex_cocompact} to prove that for a uniform lattice in the automorphism group of a non-degenerate Hermitian form over the space of traceless Hermitian forms, its action admits an equivariant spacelike embedding into a pseudo-Riemannian hyperbolic space.

\subsubsection{Definitions and coordinates}\label{sec:pseudo_riemannian_hyperbolic_spaces_definitions}
\begin{definition}[Pseudo-Riemannian hyperbolic spaces]
    Let $p,q \in \mathbb N$ with at least one of them non-zero.
    \begin{enumerate}[label=(\roman*)]
        \item The \emph{Minkowski space} $\mathbb R^{p,q+1}$ of signature $(p,q+1)$ is the vector space $\mathbb R^{p+q+1}$ equipped with a non-degenerate symmetric bilinear form $\langle \cdot, \cdot \rangle_{p,q+1}$ of signature $(p,q+1)$.
        \item The \emph{pseudo-Riemannian hyperbolic space} $\mathbf H^{p,q}$ is the set of negative lines in the projectivisation $\mathbb P(\mathbb R^{p,q+1})$ of the Minkowski space, i.e.\ 
        \[
        \mathbf H^{p,q} = \{ \mathbb R x \in \mathbb P(\mathbb R^{p,q+1}) : \langle x,x \rangle_{p,q+1} < 0 \}.
        \]
        \item The \emph{two-fold cover} $\hat{\mathbf H}^{p,q}$ of $\mathbf H^{p,q}$ is the set of unit negative vectors in $\mathbb R^{p,q+1}$, i.e.\
        \[
        \hat{\mathbf H}^{p,q} = \{ x \in \mathbb R^{p,q+1} : \langle x,x \rangle_{p,q+1} = -1 \}.
        \]
    \end{enumerate}
    We shall denote with 
    \[
    \pi: \hat{\mathbf H}^{p,q} \to \mathbf H^{p,q}
    \]
    the 2-to-1 covering map that sends a vector to the line it spans.
\end{definition}

To lay the stage for the next proposition, let $E$ be a spacelike $p$-dimensional subspace of $\mathbb R^{p,q+1}$, $F = E^\perp$ its orthogonal complement.
We fix a basis of $\mathbb R^{p,q+1} = E \oplus F$, for which the bilinear form $\langle \cdot, \cdot \rangle_{p,q+1}$ can be expressed as
\[
\langle u_E + u_F, v_E + v_F \rangle_{p,q+1} = u_E^1 v_E^1 + \cdots + u_E^p v_E^p - u_F^1 v_F^1 - \cdots - u_F^{q+1} v_F^{q+1},
\]
for $u_E, v_E \in E$ and $u_F, v_F \in F$.
In this way, restricting the ambient form $\langle \cdot, \cdot \rangle_{p,q+1}$ to $E$ and $F$, we identify $E \simeq \mathbb R^p$ with the Euclidean space and $F \simeq \mathbb R^{q+1}$ with the Minkowski space of signature $(0,q+1)$.
We will denote with $\mathbb B^p$ the open unit ball in $E$ with respect to the induced Euclidean metric, and with $\mathbb S^q$ the pseudo-sphere in $F$ defined as
    \begin{align*}
        \mathbb B^p &= \{ u \in E : \langle u,u \rangle_{p,q+1} < 1 \},\\
        \mathbb S^q &= \{ v \in F : \langle v,v \rangle_{p,q+1} = -1 \}.
    \end{align*}
\begin{lemma}[Warped product structure for $\hat{\mathbf H}^{p,q}$]\label{lem:warped_product_structure}
    The map
    \begin{align*}
        \Psi: \mathbb B^p \times \mathbb S^q &\to \hat{\mathbf H}^{p,q}\\
        (u,v) &\mapsto \frac{2}{1 - \|u\|^2} u + \frac{1 + \|u\|^2}{1 - \|u\|^2} v,
    \end{align*}
    is a diffeomorphism onto $\hat{\mathbf H}^{p,q}$, where we denote $\|u\|^2 = \langle u,u \rangle_{p,q+1}$.
    Moreover, the pull-back metric $\Psi^* g_{\hat{\mathbf H}^{p,q}}$ is given by
    \[
    \Psi^* g_{\hat{\mathbf H}^{p,q}} = \frac{4}{(1 - \|u\|^2)^2} g_{\mathbb B^p} \oplus - \left(\frac{1 + \|u\|^2}{1 - \|u\|^2}\right)^2 g_{\mathbb S^q},
    \]
    where $g_{\mathbb B^p}$ is the flat metric on $\mathbb B^p$ and $g_{\mathbb S^q}$ is the standard Riemannian spherical metric on $\mathbb S^q$.
    In particular, $g_{\mathbb B^p}$ is the metric induced by the restriction of $\langle \cdot, \cdot \rangle_{p,q+1}$ to $\mathbb B^p$, and $g_{\mathbb S^q}$ is the metric induced by the restriction of $-\langle \cdot, \cdot \rangle_{p,q+1}$ to $\mathbb S^q$.
\end{lemma}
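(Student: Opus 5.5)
We verify the three claims of \cref{lem:warped_product_structure} directly: that $\Psi$ lands in $\hat{\mathbf H}^{p,q}$, that it is a bijection with smooth inverse, and that it pulls back the metric to the stated warped product. Throughout we write $r = \|u\|^2 < 1$ and use that $E \perp F$.

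\emph{$\Psi$ lands in $\hat{\mathbf H}^{p,q}$.} Since $\langle v,v\rangle = -1$,
\[
\langle \Psi(u,v),\Psi(u,v)\rangle = \frac{4r - (1+r)^2}{(1-r)^2} = -1 .
\]

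\emph{$\Psi$ is a diffeomorphism.} We write down the inverse explicitly. Given $x = x_E + x_F \in \hat{\mathbf H}^{p,q}$, we have $-\langle x_F,x_F\rangle = 1 + \|x_E\|^2 \geq 1$, so $t := \sqrt{-\langle x_F,x_F\rangle}$ is well defined and $t\geq 1$. Set $v = x_F/t \in \mathbb S^q$. The $E$-component of $\Psi(u,v)$ must equal $x_E$, i.e. $\frac{2}{1-r}u = x_E$, and its $F$-coefficient must equal $t$, i.e. $\frac{1+r}{1-r} = t$. Solving gives
\[
r = \frac{t-1}{t+1}, \qquad u = \frac{x_E}{1+t}.
\]
This is consistent: $\|x_E\|^2 = t^2 - 1$ gives $\|u\|^2 = \frac{t^2-1}{(1+t)^2} = \frac{t-1}{t+1} = r$. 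The map $x \mapsto (u,v)$ is smooth, because $t$ is smooth (being the square root of a quantity $\geq 1$), and it is a two-sided inverse of $\Psi$. This is the Poincaré-ball parametrisation of the hyperboloid, warped along $\mathbb S^q$.

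\emph{Pull-back metric.} Write $a(r) = \frac{2}{1-r}$ and $b(r) = \frac{1+r}{1-r}$, so that $b^2 - a^2 r = 1$. Then
\[
d\Psi = a\,du + (da)\,u + b\,dv + (db)\,v,
\]
with $da = a'\,dr$, $db = b'\,dr$ and $dr = 2\langle u,du\rangle$. To evaluate $\langle d\Psi, d\Psi\rangle$ we use $E\perp F$, together with $\langle v,dv\rangle = 0$ and $\langle v,v\rangle = -1$. Note that $\langle dv,dv\rangle = -g_{\mathbb S^q}$, so the $F$-terms contribute $-b^2 g_{\mathbb S^q} - (db)^2$. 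The $E$-terms give
\[
a^2\|du\|^2 + 2a\,da\,\langle u,du\rangle + (da)^2 r = a^2\|du\|^2 + \left(a a' + (a')^2 r\right)dr^2 .
\]
There are no cross terms between $du$ and $dv$, so the two factors are orthogonal. It remains to show that the $dr^2$ coefficient vanishes:
\[
a a' + (a')^2 r - (b')^2 = 0 .
\]
With $a' = \frac{2}{(1-r)^2}$ and $b' = \frac{2}{(1-r)^2}$, this becomes
\[
\frac{4}{(1-r)^3} + \frac{4r}{(1-r)^4} - \frac{4}{(1-r)^4} = \frac{4(1-r) + 4r - 4}{(1-r)^4} = 0 .
\]
Hence
\[
\Psi^*g = a^2 g_{\mathbb B^p} - b^2 g_{\mathbb S^q},
\]
which is exactly the stated formula.

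The identification of $g_{\mathbb B^p}$ and $g_{\mathbb S^q}$ with the metrics induced by $\pm\langle\cdot,\cdot\rangle$ holds by definition. The main obstacle is the bookkeeping in the pull-back computation: keeping track of the radial terms so that the cancellation of the $dr^2$ coefficient is visible.
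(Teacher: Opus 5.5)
Your proof is correct. The explicit inverse $u = x_E/(1+t)$, $v = x_F/t$ with $t = \sqrt{-\langle x_F,x_F\rangle} = \sqrt{1+\|x_E\|^2}$ is a smooth two-sided inverse, and the radial $dr^2$ coefficient cancels because $a' = b' = 2/(1-r)^2$. The paper gives no proof of its own and only defers to \cite[Proposition 3.5]{collier2019geometry}, so your direct verification (explicit inverse, then $\langle d\Psi, d\Psi\rangle$ expanded using $E \perp F$ and $\langle v, dv\rangle = 0$) is a complete, self-contained substitute for that citation.
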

The proof of the above lemma is the same as the one of \cite[Proposition 3.5]{collier2019geometry}, so we omit it here.

\subsubsection{Immersions that are actually embeddings}
Recall that an immersion $f: M \to N$ between a manifold $M$ and a pseudo-Riemannian manifold $(N,g_N)$ is spacelike if the pull-back metric $f^* g_N$ is Riemannian.
In the construction of our counterexamples (more specifically in \cref{prop:adjoint_representation_convex_cocompact}), we will need a criterion to ensure that a spacelike immersion is actually an embedding.
This is the goal of the following lemma, whose proof is a straightforward generalization of \cite[Lemma 3.7]{collier2019geometry} from the setting of surfaces to higher dimensional manifolds.
Its proof can be found in \cite[Lemma 3.11]{seppi2023complete}.
\begin{lemma}[Embedding criterion for immersions]\label{lem:spacelike_embedding}
    Let $M$ be a connected manifold of dimension $p$ and $f: M \to \mathbf H^{p,q}$ be an injective spacelike immersion.
    If the pull-back metric $f^* g_{\mathbf H^{p,q}}$ is complete, then $f$ is an embedding and its image $f(M)$ is an embedded spacelike $p$-dimensional submanifold of $\mathbf H^{p,q}$.
\end{lemma}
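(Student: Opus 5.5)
The plan is to prove the statement in two steps: first, $f$ is a proper map; second, an injective proper immersion is an embedding with closed image. The dimension count is automatic, since $f$ is an immersion of a $p$-manifold. The natural tool for properness is the warped product structure of \cref{lem:warped_product_structure}.

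First I lift to the double cover. Since $f(M)$ is spacelike, I would argue that $f$ lifts to $\hat f: M\to\hat{\mathbf H}^{p,q}$. If $M$ is not simply connected, I pass to the universal cover $\tilde M$, which is still complete for the pulled-back metric, and descend at the end using injectivity of $f$. Fix a spacelike $p$-plane $E$ and write $\Psi^{-1}\circ\hat f=(u,v)$ with $u:M\to\mathbb B^p$ and $v:M\to\mathbb S^q$.

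The key pointwise estimate comes from the formula for $\Psi^*g$. Spacelikeness says that for every tangent vector $X$,
\[
0< \hat f^*g(X,X)=\frac{4|du(X)|^2}{(1-\|u\|^2)^2}-\Big(\frac{1+\|u\|^2}{1-\|u\|^2}\Big)^2|dv(X)|^2_{\mathbb S^q}.
\]
From this, $\hat f^*g(X,X)\le \frac{4|du(X)|^2}{(1-\|u\|^2)^2}$, which is the hyperbolic metric $g_{\mathbb H^p}$ on $\mathbb B^p$ (Poincaré model) pulled back by $u$. Also $du$ is injective, since $du(X)=0$ would force $\hat f^*g(X,X)\le 0$. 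So $u:M\to\mathbb H^p$ is an immersion between $p$-manifolds with $u^*g_{\mathbb H^p}\ge \hat f^*g$. Because $\hat f^*g$ is complete, $u^*g_{\mathbb H^p}$ is complete as well. A local isometry from a complete Riemannian manifold onto a connected one is a covering map, and $\mathbb H^p$ is simply connected. I would therefore conclude that $u$ is a diffeomorphism onto $\mathbb H^p$. This is the standard step, as in \cite[Lemma 3.7]{collier2019geometry}.

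With $u$ a diffeomorphism, $\hat f(M)$ is the graph of $v\circ u^{-1}:\mathbb B^p\to\mathbb S^q$ in the coordinates $\Psi$. A graph over a manifold is a closed embedded submanifold, and $\hat f$ is a homeomorphism onto it with inverse $u^{-1}\circ\mathrm{pr}_1\circ\Psi^{-1}$. Composing with the covering $\pi$, which is injective on this graph because $f$ is injective, and using that $\pi$ is a local diffeomorphism, I get that $f$ is an embedding onto an embedded spacelike $p$-dimensional submanifold.

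I expect the main obstacle to be the lifting and descent in the non-simply-connected case: I must make sure that the lift $\tilde M\to\hat{\mathbf H}^{p,q}$ factors through $M$, or else argue directly. The direct route works as follows. The map $u$ on $\tilde M$ is a diffeomorphism, so $\tilde M\to\hat{\mathbf H}^{p,q}$ is injective. Then $f\circ(\text{cover})$ is at most $2$-to-$1$ through $\pi$. Combined with injectivity of $f$, this forces the covering $\tilde M\to M$ to have degree at most $2$, and local homeomorphism onto the image then follows.
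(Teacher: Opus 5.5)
Your proposal is correct and is essentially the argument the paper defers to (\cite[Lemma 3.11]{seppi2023complete}, generalizing \cite[Lemma 3.7]{collier2019geometry}): the projection to the Poincaré ball is metric-increasing for the complete induced metric, so it is a covering of the simply connected $\mathbb H^p$, hence a diffeomorphism, and the lift is a closed graph. The one step to tighten is the descent, where ``$\pi$ is a local diffeomorphism'' or ``local homeomorphism onto the image'' is not enough; instead, note that $\pi\circ\hat f$ is proper (a homeomorphism onto a closed graph followed by the finite covering $\pi$), so $f$ is proper because $f^{-1}(K)=c\bigl((\pi\circ\hat f)^{-1}(K)\bigr)$ for the covering $c:\tilde M\to M$, and an injective proper immersion is a closed embedding.
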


Specializing to the case of immersions of symmetric spaces, we obtain the following criterion for embeddedness that is more tailored to the setting of our counterexamples.
In fact, the following proof would still be valid for any quotient of $G$ by a closed subgroup that admits cocompact quotients.
\begin{lemma}[Cocompact embedding of symmetric spaces criterion]\label{lem:criterion_spacelike_cocompact}
        Let $G$ be a semisimple Lie group, $K \leq G$ be a compact subgroup and $f: G/K \to \mathbf H^{p,q}$ be a spacelike immersion. Denote with $\tilde M = f(G/K)$ the image of $f$.
        \begin{enumerate}
            \item If $\rho: G \to \mathrm{SO}(p,q+1)$ is an injective representation such that $f$ is $\rho$-equivariant, then $f$ is a proper embedding and $\tilde M$ is a complete connected spacelike submanifold of $\mathbf H^{p,q}$.
            \item If $\Gamma \leq G$ is a uniform lattice of $G$, and $\rho: \Gamma \to \mathrm{SO}(p,q+1)$ is an injective representation such that $f$ is $\rho$-equivariant, then $f$ is a proper embedding and $\tilde M$ is a complete connected spacelike submanifold of $\mathbf H^{p,q}$ over which $\rho(\Gamma)$ acts properly discontinuously and cocompactly.
        \end{enumerate}
\end{lemma}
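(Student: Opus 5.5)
The plan is to reduce both parts to \cref{lem:spacelike_embedding}. Choose a $G$-invariant Riemannian metric $g_0$ on $G/K$, which exists because $K$ is compact. Every $G$-invariant (or even $\Gamma$-invariant, for a uniform lattice) Riemannian metric on $G/K$ is complete. Indeed, a homogeneous metric is complete: closed balls of a fixed small radius $\epsilon$ around any point are compact, since they are translates of one such ball. In the lattice case, the $\Gamma$-action is cocompact, so there is a compact fundamental domain $D$ and again a uniform $\epsilon$ works. The first step is to show that the pulled-back metric $f^*g_{\mathbf H^{p,q}}$ is one of these invariant metrics. Since $f$ is spacelike, $f^*g_{\mathbf H^{p,q}}$ is Riemannian. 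Since $\rho$ acts by isometries of $\mathbf H^{p,q}$, equivariance $f(gx)=\rho(g)f(x)$ gives $g^*f^*g_{\mathbf H^{p,q}}=f^*\rho(g)^*g_{\mathbf H^{p,q}}=f^*g_{\mathbf H^{p,q}}$. So the pulled-back metric is $G$-invariant in case 1 and $\Gamma$-invariant in case 2, and is therefore complete.

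The second step is injectivity of $f$, which \cref{lem:spacelike_embedding} needs. I expect this to be the main obstacle, because equivariance alone does not obviously prevent $f$ from identifying distinct points. To handle it, I would consider the smallest quotient of $G/K$ through which $f$ factors. Let $H=\{g\in G : f(gK)=f(K)\}$, the stabilizer of $f(eK)$ under the $\rho$-action. In case 1 this is a closed subgroup containing $K$, and $f$ factors through $G/H$. Because $f$ is an immersion of a $p$-dimensional manifold, its fibers are discrete, so $H/K$ is discrete. It is also compact: $H$ is the preimage under $\rho$ of the stabilizer of a point in $\mathbf H^{p,q}$, which is $\mathrm{SO}(p,q)$, and this preimage is a closed subgroup whose image preserves the positive-definite tangent structure of the spacelike image near $f(eK)$. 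So $H/K$ is finite. The resulting map $G/H\to\mathbf H^{p,q}$ is an injective spacelike immersion with complete pull-back metric, so \cref{lem:spacelike_embedding} applies to it.

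If the context allows, an alternative route to injectivity is this: the complete spacelike immersion is a covering onto its image (by the argument behind \cref{lem:spacelike_embedding}, namely the graph description of \cref{lem:warped_product_structure}), and the image is simply connected as a spacelike graph over $\mathbb B^p$. The immersion is then a diffeomorphism from $G/K$, whose space of ends is also simply connected since it is a symmetric space. I would use this second argument as the main one, since it bypasses the stabilizer analysis. Concretely: the pull-back is complete, so $f$ is a covering onto a spacelike $p$-submanifold, which is a graph of a $1$-Lipschitz map $\mathbb B^p\to\mathbb S^q$ and hence simply connected. Since $G/K$ is connected, $f$ is then injective.

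Given injectivity and completeness, \cref{lem:spacelike_embedding} shows that $f$ is an embedding and that $\tilde M$ is a complete connected embedded spacelike submanifold. Properness follows from completeness: a closed and bounded subset of $\tilde M$ pulls back to a bounded set for $f^*g$, which is compact by Hopf--Rinow. For case 2, the action of $\rho(\Gamma)$ on $\tilde M$ is conjugate via $f$ to the action of $\Gamma$ on $G/K$, which is properly discontinuous and cocompact because $\Gamma$ is a uniform lattice and $K$ is compact. Injectivity of $\rho$ ensures this identification of actions is faithful.
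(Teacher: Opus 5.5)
Your main line is correct and has the same skeleton as the paper's proof. Equivariance makes $f^*g_{\mathbf H^{p,q}}$ a $G$- (resp.\ $\Gamma$-)invariant Riemannian metric, cocompactness makes it complete, and \cref{lem:spacelike_embedding} concludes. The details differ in two ways.

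\textbf{Completeness.} The paper deduces case~1 from case~2 by restricting to a uniform lattice. It then gets completeness by descending to the compact quotient $\Gamma\backslash G/K$, using that the quotient map is a local isometry. This tacitly needs $\Gamma$ to act freely: the proof asserts ``without torsion'', which for a uniform lattice requires first passing to a torsion-free finite-index subgroup. Your uniform-$\epsilon$ compact-ball argument needs neither freeness nor the existence of lattices.

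\textbf{Injectivity.} The paper's proof never addresses injectivity of $f$, although \cref{lem:spacelike_embedding} as stated assumes it. In the application, injectivity comes separately from $\mathrm{Stab}_G(\mathbb R x_0)=K$. Your covering argument supplies it in general, but phrase it precisely: ``$f$ is a covering onto its image'' presupposes that the image is already a manifold. Instead:
\begin{itemize}
\item Lift $f$ to $\hat f: G/K\to\hat{\mathbf H}^{p,q}$; this is possible since $G/K$ is contractible for $K$ maximal compact.
\item Set $P=\mathrm{pr}_{\mathbb B^p}\circ\Psi^{-1}\circ\hat f$. By \cref{lem:warped_product_structure}, $P$ is a local diffeomorphism with $P^*g_{\mathrm{hyp}}\geq f^*g_{\mathbf H^{p,q}}$.
\item Hence $P^*g_{\mathrm{hyp}}$ is complete and $P$ is a covering of the simply connected $\mathbb B^p$, so $P$ is a diffeomorphism. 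Thus $\hat f(G/K)$ is the graph of some $\varphi:\mathbb B^p\to\mathbb S^q$.
\item For injectivity into $\mathbf H^{p,q}$ rather than $\hat{\mathbf H}^{p,q}$, the graph must also be disjoint from its image under $x\mapsto -x$, i.e.\ $\varphi(u)\neq-\varphi(-u)$. This follows from $\varphi$ being strictly $1$-Lipschitz for the round metrics.
\end{itemize}

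Two other steps should be dropped or repaired.

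\emph{The stabilizer argument.} It does not give what you need. Even if $H/K$ were finite, it would only make $G/H\to\mathbf H^{p,q}$ an embedding, whereas injectivity of $f$ requires $H=K$. The compactness of $H$ is also unjustified. Point stabilizers in $\mathrm{SO}(p,q+1)$ are in general non-compact. Moreover, $\rho(h)$ for $h\in H$ sends the sheet of the image through $eK$ to the sheet through $hK$, so it need not preserve a single tangent plane at $f(eK)$.

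\emph{Properness.} It does not follow from completeness as you state it. There is no ambient notion of ``bounded'' in $\mathbf H^{p,q}$. An injective immersion with complete induced metric can accumulate on itself, as an irrational line in a torus does. Properness comes instead from the graph structure: $P$ is a diffeomorphism onto $\mathbb B^p$, and a compact subset of $\hat{\mathbf H}^{p,q}$ projects into a compact subset of $\mathbb B^p$. Hence its preimage under $\hat f$ is closed inside a compact set, so compact.
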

\begin{proof}
    Since $G$ is a semisimple Lie group, a uniform lattice $\Gamma$ exists.
    Therefore the first point of the lemma is a special case of the second.
    To prove the latter, we will show that the pull-back metric $f^* g_{p,q}$ is complete, which will imply by the preceding lemma that $f$ is a proper embedding and that $\tilde M$ is a spacelike (embedded) submanifold.

    First note that since $f$ is a spacelike immersion, $f^* g_{p,q}$ is a Riemannian metric on $G/K$.
    A priori, $\Gamma$ may not act by isometries with respect to this metric, but $\rho$-equivariance of $f$ implies that it does.
    Indeed, writing the equivariance condition as $f \circ \gamma = \rho(\gamma) \circ f$ for all $\gamma \in \Gamma$, we have that $\gamma^* (f^* g_{p,q}) = f^* (\rho(\gamma)^* g_{p,q}) = f^* g_{p,q}$, since $\rho(\gamma) \in \SO(p,q+1)$ is an isometry of $\mathbf H^{p,q}$.
    
    Since $\Gamma$ is a discrete subgroup of isometries of a Riemannian metric on $G/K$, which is a complete Riemannian manifold, we know that $\Gamma$ acts properly discontinuously.
    Moreover, since $\Gamma$ is a uniform lattice in $G$, the quotient $\Gamma \backslash G/K$ is compact.
    
    Knowing that $\Gamma$ acts smoothly, without torsion,
    properly discontinuously and cocompactly on the Riemannian manifold $(G/K, f^* g_{p,q})$, we may conclude using the theorem of Hopf--Rinow that $f^* g_{p,q}$ is a complete Riemannian metric on $G/K$.
    Indeed, $(\Gamma \backslash G/K, f^* g_{p,q})$ inherits a Riemannian metric that makes the quotient map a local isometry.
    Since the quotient is compact, the metric on it is complete, implying that $f^* g_{p,q}$ is complete as well.

    Because $f: G/K \to f(M) \subseteq \mathbf H^{p,q}$ is a $\rho$-equivariant diffeomorphism, $\rho(\Gamma)$ is a discrete torsion-free subgroup of isometries of $f(M)$, so its action is properly discontinuous.
    Equivariance also implies that it descends to a diffeomorphism $\Gamma \backslash G/K \simeq f(M)/\rho(\Gamma)$ that makes the following diagram commute: 
    \[
        \begin{tikzcd}
            G/K \arrow[r, "f"] \arrow[d] &f(M) \subseteq \mathbf H^{p,q} \arrow[d] \\
            \Gamma \backslash G/K \arrow[r, dashed]  & \rho(\Gamma) \backslash f(M)
        \end{tikzcd}.
    \]
    Hence $\rho(\Gamma) \backslash f(M)$ is compact, being the continuous image of the compact $\Gamma \backslash G/K$.
\end{proof}

\subsection{Hermitian forms}\label{sec:hermitian_forms}
Here we recall basic definitions and properties of Hermitian forms over the real numbers $\mathbb R$, the complex numbers $\mathbb C$ and the quaternions $\mathbb H$.
We then describe how fixing a basis yields an identification between the automorphism group of a Hermitian form, the space of transformations whose adjoint is equal to their inverse and a subspace of matrices over $\mathbb F$.
Similarly, fixing a basis and a "reference" Hermitian form, we can establish identifications between the space of Hermitian forms, the space of self-adjoint transformations and a subspace of matrices over $\mathbb F$.
These identifications will be useful in computations when handling representations (as in \cref{sec:adjoint_representations}).
\subsubsection{Hermitian forms and their automorphism groups}\label{sec:hermitian_forms_definitions}
We recall that a Hermitian form over a complex vector space is a bilinear form that is linear in the second argument and conjugate linear in the first, and that is conjugate symmetric.
This definition has a natural generalization to the case of quaternionic vector spaces, with the only difference arising from the non-commutativity of the quaternions.
In the case of $\mathbb F = \mathbb R$ of the following definition, conjugation is the identity.
\begin{definition}[Hermitian form]\label{def:hermitian_form}
    Let $V$ be a finite dimensional vector space over a field $\mathbb F$ that is either $\mathbb R$, $\mathbb C$ or $\mathbb H$.
    A \emph{Hermitian form} on $V$ is an $\mathbb R$-bilinear form $h: V \times V \to \mathbb F$ such that
    \begin{enumerate}[label=(\roman*)]
        \item It is linear in the second argument and conjugate linear in the first: $h(x \lambda, y \mu) = \bar \lambda h(x,y) \mu$ for all $x,y \in V$ and $\lambda, \mu \in \mathbb F$.
        \item It is conjugate symmetric: $h(x,y) = \overline{h(y,x)}$ for all $x,y \in V$.
    \end{enumerate}
    The \emph{space of Hermitian forms} on $V$ is denoted with $\mathrm{Herm}_\mathbb F(V)$, on which $\GL(V)$ acts by 
    \[
    (g \cdot h) (u, v) = h(g^{-1}u, g^{-1}v), \text{ for } g \in \GL(V),\ h \in \mathrm{Herm}_\mathbb F(V),\ u,v \in V.
    \]
    We shall call a form $h \in \mathrm{Herm}_\mathbb F(V)$ \emph{non-degenerate} if the induced map $V \to V^*$ given by $v \mapsto h(v, \cdot)$ is an isomorphism.
\end{definition}

\begin{definition}[Adjoint transformation]
    Let $V$ be a finite dimensional vector space over $\mathbb F \in \{ \mathbb R, \mathbb C, \mathbb H \}$, and $h \in \mathrm{Herm}_\mathbb F(V)$ be a non-degenerate Hermitian form on $V$.
    For every linear transformation $A \in \mathfrak{gl}(V)$, we denote with $A^{*_h} \in \mathfrak{gl}(V)$ the \emph{adjoint transformation} of $A$ with respect to $h$, defined by the relation
    \[
    h(Au, v) = h(u, A^{*_h}v), \text{ for all } u,v \in V.
    \]
    The induced mapping $\mathfrak{gl}(V) \to \mathfrak{gl}(V)$ is an $\mathbb R$-linear involution.
    If we fix a basis $e_1, \cdots, e_n$ of $V$, in the usual identification of $\gl(V)$ with $\mathbb F^{n \times n}$ the involution is given by $A \mapsto H^{-1} A^* H$, where $H$ is the matrix of the Hermitian form $h$ with respect to the basis $e_1, \cdots, e_n$, i.e.\ 
    \[
    h_0 \left( \sum_i x_i e_i, \sum_j y_j e_j \right) = \sum_{i=1}^n \bar x_i H_{ij} y_j,
    \]
    and $A^* = \overline{A^t}$ is the adjoint of $A$ with respect to the standard Hermitian form on $\mathbb F^{n \times n}$ (whose corresponding matrix is the identity).
\end{definition}
\begin{definition}[Automorphism group of a Hermitian form]
    Let $V$ be a finite dimensional vector space over $\mathbb F \in \{ \mathbb R, \mathbb C, \mathbb H \}$, and $h \in \mathrm{Herm}_\mathbb F(V)$ be a non-degenerate Hermitian form on $V$.
    The \emph{automorphism group} of $h$ is defined in the three equivalent ways:
    \begin{align*}
        \mathrm{Aut}(h) &= \mathrm{Stab}_{\GL(V)}(h)\\
        &= \left\{ g \in \GL(V) : h(g u, g v) = h(u,v) \text{ for all } u,v \in V \right\}\\
        &= \left\{ g \in \GL(V) : g^{*_h} = g^{-1} \right\}.
    \end{align*}
\end{definition}
If we fix a basis $e_1, \cdots, e_n$ of $V$, then the usual identification of linear transformations with matrices restricts to an identification of the automorphism group of $\mathrm{Aut}(h)$ with a subspace of $\mathbb F^{n\times n}$:
\[
\mathrm{Aut}(h) \simeq \left\{ A \in \mathbb F^{n \times n}: H^{-1} A^* H = A^{-1}
    \right\}.
\]
It will be useful for us to fix a non-degenerate form $h_0 \in \mathrm{Herm}_\mathbb F(V)$, in order to identify the space of Hermitian forms $\mathrm{Herm}_\mathbb F(V)$ with the space of self-adjoint linear transformations:
\begin{lemma}[Hermitian forms as self-adjoint transformations]\label{lem:hermitian_forms_self_adjoint_representations}\label{lem:identifications}
    Let $V$ be a finite dimensional vector space over $\mathbb F \in \{ \mathbb R, \mathbb C, \mathbb H \}$, and fix a non-degenerate Hermitian form $h_0 \in \mathrm{Herm}_\mathbb F(V)$ on $V$.
    Then we can identify (through a conjugate-linear isomorphism) the space of Hermitian forms $\mathrm{Herm}_\mathbb F(V)$ with the space of self-adjoint linear transformations, as representations of the automorphism group $\mathrm{Aut}(h_0)$:
    \begin{align*}
        \left\{ T \in \mathfrak{gl}(V) : T^{*_{h_0}} = T \right\} &\cong \mathrm{Herm}_\mathbb F(V)\\
        T &\mapsto h_0(T \cdot, \cdot)
    \end{align*}
    Given a basis $e_1, \cdots, e_n$ of $V$, the usual identification of transformations with matrices gives us an isomorphism of representations:
    \begin{align*}
        \left\{ T \in \gl(V) : T^{*_{h_0}} = T \right\} &\cong \left\{ A \in \mathbb{F}^{n \times n} : H_0^{-1} A^* H_0 = A\right\}
    \end{align*}
    Both of these identifications are isomorphisms of representations of $\mathrm{Aut}(h_0)$, which acts by 
    \begin{align*}
        (g \cdot h)(u, v) &= h(g^{-1} u, g^{-1} v), \text{ on } \mathrm{Herm}_\mathbb F(V)\\
        (g \cdot T) &= g T g^{-1}, \text{ on } \left\{T \in \gl(V): T^{*_{h_0}} = T\right\}\\
        (g \cdot A) &= g A g^{-1}, \text{ on } \left\{ A \in \mathbb F^{n\times n} : H_0^{-1} A^* H_0 = A \right\},
    \end{align*}
    where in the right hand side of the last equality, we denote with the same symbol $g$ the automorphism itself (as an element $\Aut(h_0) \subseteq \gl(V)$), and its matrix representation with respect to the basis $e_1, \cdots, e_n$.
\end{lemma}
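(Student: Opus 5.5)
The plan is to verify the three claims of \cref{lem:identifications} directly: the map $T \mapsto h_0(T\cdot,\cdot)$ is a well-defined bijection onto $\mathrm{Herm}_\mathbb F(V)$, the matrix description is correct, and both identifications intertwine the $\mathrm{Aut}(h_0)$-actions.

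\textbf{Well-definedness.} Fix $T$ with $T^{*_{h_0}} = T$ and set $h_T(u,v) = h_0(Tu, v)$. This form is $\mathbb R$-bilinear. It inherits sesquilinearity from $h_0$, because $T$ is $\mathbb F$-linear. Conjugate symmetry follows from self-adjointness:
\[
\overline{h_T(v,u)} = \overline{h_0(Tv,u)} = h_0(u,Tv) = h_0(Tu,v) = h_T(u,v).
\]

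\textbf{Bijectivity.} Injectivity holds because $h_T = 0$ forces $Tu \in \ker(v \mapsto h_0(\cdot, v))$ for every $u$, and this kernel is trivial since $h_0$ is non-degenerate. For surjectivity, take $h \in \mathrm{Herm}_\mathbb F(V)$. Non-degeneracy of $h_0$ gives a unique map $T$ with $h(u,\cdot) = h_0(Tu,\cdot)$. Uniqueness, together with conjugate-linearity in the first slot, forces $T(u\lambda) = (Tu)\lambda$, so $T$ is $\mathbb F$-linear. Conjugate symmetry of $h$ and $h_0$ then gives
\[
h_0(u, Tv) = \overline{h_0(Tv,u)} = \overline{h(v,u)} = h(u,v) = h_0(Tu,v),
\]
so $T = T^{*_{h_0}}$. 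The map is additive and real-linear, which is all that is needed for an isomorphism of real representations. The phrase ``conjugate-linear'' in the statement refers only to the convention relative to the $\mathbb F$-structure, so I will not dwell on it.

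\textbf{Equivariance.} For $g \in \mathrm{Aut}(h_0)$ we have $g^{-1} = g^{*_{h_0}}$. Hence
\[
(g\cdot h_T)(u,v) = h_0(Tg^{-1}u, g^{-1}v) = h_0(gTg^{-1}u, v) = h_{gTg^{-1}}(u,v).
\]
Conjugation also preserves self-adjointness, since $(gTg^{-1})^{*_{h_0}} = g T^{*_{h_0}} g^{-1}$.

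\textbf{Matrix form.} After fixing the basis, the previous definition already identifies $A^{*_{h_0}}$ with $H_0^{-1}A^*H_0$. The self-adjointness condition therefore becomes $H_0^{-1}A^*H_0 = A$. The action by conjugation is carried over unchanged, because matrix multiplication represents composition.

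\textbf{Main obstacle.} The only delicate point is the quaternionic case. One must keep track of the right scalar multiplication convention $x = \sum e_i x_i$, so that linear maps act by left matrix multiplication. One must also check that the formula $h_0(\sum x_i e_i, \sum y_j e_j) = \sum \bar x_i H_{ij} y_j$ yields $A^{*_{h_0}} = H_0^{-1}\bar A^t H_0$. I would confirm this using $\overline{ab} = \bar b\,\bar a$ in $\mathbb H$; the remaining steps are routine.
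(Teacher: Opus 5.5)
Your proof is correct and follows essentially the same route as the paper: well-definedness from self-adjointness, injectivity from non-degeneracy, and surjectivity by inverting the isomorphism $V \to V^*$ induced by $h_0$ (the paper writes this as $T = \Phi_{h_0}^{-1}\circ\Phi_h$). You also check the $\mathbb F$-linearity and self-adjointness of the constructed $T$, the $\mathrm{Aut}(h_0)$-equivariance, and the quaternionic matrix formula $A^{*_{h_0}} = H_0^{-1}A^*H_0$; the paper's proof leaves all of these implicit.
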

\begin{proof}
    If $T \in \gl(V)$, then the form $h_0(T \cdot, \cdot)$ is a Hermitian form if and only if $T^{*_{h_0}} = T$.
    Hence the mapping $T \mapsto h_0(T \cdot, \cdot)$ is well-defined, and by non-degeneracy, it is injective.

    For surjectivity, we first remark that non-degeneracy of $h_0$ is equivalent to having the mapping $\Phi_{h_0}: V \to V^*, v \mapsto h_0(\cdot, v)$ be an isomorphism.
    Thus, $T \defeq \Phi_{h_0}^{-1} \circ \Phi_h :V \to V$ is a well-defined linear transformation for any Hermitian form $h$, and by construction we have that $h(x, \cdot) = h_0(T x, \cdot)$ for all $x \in V$.
\end{proof}
While the above identification guarantees the existence of some $T \in \mathfrak{gl}(V)$ such that $h(x,y) = h_0(T x, y)$ for all $x,y \in V$, its proof does not give us a way to explicitly construct it.
However, this is possible by fixing a basis of $V$ and using matrix representations, thus providing an alternative proof of surjectivity.
\begin{remark}[Construction of $T$]
    Let $h \in \mathrm{Herm}_\mathbb F(V)$ be a Hermitian form, $e_1, \cdots, e_n$ be a basis of $V$ and $H, H_0$ be the matrices of $h$ and $h_0$ with respect to this basis.
    Then, $h = h_0(T \cdot , \cdot)$ is equivalent to $T = H_0^{-1} H$, where here we use the same notation $T$ to denote the transformation appearing in \cref{lem:hermitian_forms_self_adjoint_representations} and its corresponding matrix with respect to the fixed basis.
\end{remark}

\begin{remark}[Choice of reference form and basis]\label{rem:reference_form_and_basis}
    We would like to stress that while the matrix representation of the automorphism group $\mathrm{Aut}(h_0)$ depends only on the choice of basis of $V$ (in contrast to $\mathrm{Herm}_\mathbb F(V)$ that depends on the choice of a reference form as well), the action of $\mathrm{Aut}(h_0)$ on the subspace of $\mathbb F^{n\times n}$ associated to $\mathrm{Herm}_\mathbb F(V)$ is well-defined only when the reference form is $h_0$.
    Similarly, for the definition in \cref{sec:hyperbolic_spaces} of the hyperbolic space $\mathbf H^n_\mathbb F$, the same form must be used.
\end{remark}

Using the identifications of \cref{lem:identifications}, we can define traceless Hermitian forms, which will be useful in the construction of our counterexamples.
For this, we introduce the following notion of trace for real, complex and quaternionic matrices:
\begin{definition}[Reduced trace of quaternionic matrix]
    Let $V$ be a vector space over $\mathbb F \in \{ \mathbb R, \mathbb C, \mathbb H \}$.
    \begin{enumerate}[label=(\roman*)]
        \item We define the \emph{(reduced) trace of a matrix} $A \in \mathbb F^{n \times n}$ as the real part of the sum of its diagonal entries, i.e.\
        \[
        \tr_\mathbb R(A) = \mathrm{Re}\left( \sum_i A_{ii} \right).
        \]
        \item We define the \emph{(reduced) trace of a linear transformation} $T \in \gl(V)$ as the reduced trace of its matrix representation with respect to any basis of $V$.
        \item The (reduced) trace of a Hermitian form $h \in \mathrm{Herm}_\mathbb F(V)$ is defined as the reduced trace of the corresponding self-adjoint transformation with respect to any basis of $V$ 
        \item The Lie algebra consisting of matrices with zero reduced trace is denoted with $\mathfrak{sl}(n, \mathbb H)$.
    \end{enumerate}
\end{definition}
The reason that we use the reduced trace and not the sum of the diagonal entries of the matrix representation of $T$ is that the non-commutativity of the quaternions renders it dependent on the choice of basis of $V$.
Using the above, we can define traceless Hermitian forms as follows:
\begin{definition}[Traceless Hermitian forms]
    Let $V$ be a finite dimensional vector space over $\mathbb F \in \{ \mathbb R, \mathbb C, \mathbb H \}$ and $h_0 \in \mathrm{Herm}_\mathbb F(V)$ be a non-degenerate Hermitian form on $V$.
    We define the subspace of \emph{traceless Hermitian forms} as 
    \[
    \mathrm{Herm}^0_\mathbb F(V) \defeq \begin{cases}
    \left\{ h \in \mathrm{Herm}_\mathbb F(V) : \tr(h) = 0 \right\}, & \text{if } \mathbb F \in \{\mathbb R, \mathbb C\}\\
    \left\{ h \in \mathrm{Herm}_\mathbb H(V) : \tr_\mathbb R(h) = 0 \right\}, & \text{if } \mathbb F = \mathbb H
    \end{cases}
    \]
    where the trace of a Hermitian form $h$ is defined as the trace of the corresponding self-adjoint transformation with respect to the reference form $h_0$.
\end{definition}

The following representation will be the object of study for the rest of this section.
For the right values of $p$ and $q$, in \cref{sec:adjoint_representations}  it will yield the examples announced in the introduction that admit equivariant spacelike embeddings in $\mathbb H^{p,q}$ and disprove the lemmas in \cite{pozzetti_anosov_2023}.
\begin{definition}[Representation $\tau$]\label{def:representation_tau}
Let $V$ be a finite dimensional vector space over $\mathbb F \in \{ \mathbb R, \mathbb C, \mathbb H \}$, and $h_{p,q}$ be a non-degenerate Hermitian form of signature $(p,q)$ on $V$.
denoting by $G = \mathrm{Aut}(h_{p,q}) \cap \mathrm{SL}(V, \mathbb F)$, we define the \emph{representation $\tau$}
\[
\tau: G \to \GL\left(\mathrm{Herm}^0_\mathbb F(V)\right)
\] 
to be the one corresponding to the action of $G$ on $\mathrm{Herm}^0_\mathbb F(V)$.
\end{definition}
Since the reduced trace of a Hermitian form is preserved by conjugation, the representation $\tau$ is well-defined, in the sense that it preserves the subspace of traceless Hermitian forms $\mathrm{Herm}^0_\mathbb F(V)$.
In \cref{prop:automorphism_group_action_on_traceless_hermitian_forms} we will show that its image actually lies in $\mathrm{SO}(p',q')$ for certain $p', q'$.

\subsection{Adjoint representation perspective}\label{sec:adjoint_representation_perspective}
An important observation that will be practical when proving the properties of the representations constructed in \cref{sec:adjoint_representations} is that $\mathrm{Herm}^0_\mathbb F(V)$ is a subspace of $\ssl(V, \mathbb F)$, and (by \cref{lem:identifications}) the automorphism group $\Aut(h_0)$ acts on it by conjugation,
which can lead to a different perspectives on the level of the Lie groups $\Aut(h_0), \GL(V, \mathbb F)$ and the Lie algebras $\mathrm{Lie}(\Aut(h_0)), \ssl(V, \mathbb F)$.
For clarity, we gather these observations in this section.

    Since the action of $G = \Aut(h_0) \cap \mathrm{SL}(V, \mathbb F) \subseteq \GL(V, \mathbb F)$ on $\mathrm{Herm}^0_\mathbb F(V)$ is given by conjugation, one can recover $\tau$ from the adjoint representation $\Ad: \GL(V, \mathbb F) \to \GL(\gl(V, \mathbb F))$.
    Indeed, by restricting first the domain of $\Ad$ from $\GL(d,\mathbb F)$ to $G$, and then each element $\Ad_g$ to the invariant subspace $\mathrm{Herm}^0_\mathbb F(V) \subseteq \gl(V, \mathbb F)$, one obtains $\tau$

    The same viewpoint can be pursued in the Lie algebra level, via the Cartan decomposition of $\ssl(n, \mathbb F)$ and the adjoint representation of the Lie algebra.
    More concretely, let $\mathfrak h = \ssl(n, \mathbb F)$, and consider the involutions $\theta_n(X) = -X^*$ and $\theta_0(X) = -X^{*_{h_0}}$ for $X \in \mathfrak h$, where $X^*$ is the adjoint of $X$ with respect to the standard Hermitian form $h_n$ on $\mathbb F^n$, and coincides with the conjugate transpose of the matrix $X$.
    Then the eigenspaces corresponding to the eigenvalue $1$ are given by the traceless parts of the Lie algebras of the automorphism groups:
    \begin{align*}
        \mathfrak h^{\theta_n} &= \mathrm{Lie}(\mathrm{Aut}(h_n)) \cap \ssl(n, \mathbb F) \\
        \mathfrak h^{\theta_0} &= \mathrm{Lie}(\mathrm{Aut}(h_0)) \cap \ssl(n, \mathbb F).
    \end{align*}
    denoting by $\mathfrak p_n = \mathfrak h^{-\theta_n}$, and $\mathfrak p_0 = \mathfrak h^{-\theta_0}$ the anti-fixed points of each involution, we observe that $\mathfrak p_n$ and $\mathfrak p_0$ are precisely the subspaces of traceless Hermitian forms if we use $h_n$ and $h_0$ as "reference" form respectively.

    The corresponding Cartan decompositions read:
    \begin{align*}
        \ssl(n, \mathbb F) &= \mathfrak g^{\theta_n} \oplus \mathfrak p_n \\
        &= \mathfrak g^{\theta_0} \oplus \mathfrak p_0.
    \end{align*}
    Looking closely - similarly to the situation at the level of a group - the action of $\mathrm{Lie}(\Aut(h_0))$ on $\mathrm{Herm}^0_\mathbb F(V)$ is given by the restriction of the adjoint representation $\ad: \gl(n, \mathbb F) \to \gl(\gl(n, \mathbb F))$ to the subalgebra $\mathrm{Lie}(\Aut(h_0)) \cap \ssl(n,\mathbb F) = \mathfrak g^{\theta_0}$ and the invariant subspace $\mathrm{Herm}^0_\mathbb F(V) = \mathfrak p_0$.

    Later it will be useful to refine one decomposition using the other by considering the composition of the two involutions.
    Given that they commute, the eigenspaces of their composition $\sigma = \theta_n \circ \theta_0$ are given by the intersections of the eigenspaces of $\theta_n$ and $\theta_0$, leading to the decomposition:
    \begin{align}\label{eq:refined_cartan_decomposition}
        \begin{split}
        \ssl(n, \mathbb F) = &(\mathfrak h^{\theta_n} \cap \mathfrak h^{\theta_0}) \oplus (\mathfrak p_n \cap \mathfrak p_0) \oplus \\
        &(\mathfrak h^{\theta_n} \cap \mathfrak p_0) \oplus (\mathfrak p_n \cap \mathfrak h^{\theta_0}),    
        \end{split}
    \end{align}
    with the first two summands corresponding to the eigenvalue $1$, and the last two to the eigenvalue $-1$ of~$\sigma$.

\subsection{Action on traceless Hermitian forms is strongly irreducible}\label{sec:strong_irreducibility}
In this section we will apply the perspective of \cref{sec:adjoint_representation_perspective} to show that the representation $\tau$ defined in \cref{def:representation_tau} is strongly irreducible when restricted to a uniform lattice.
The rest of its properties are presented and proved in \cref{sec:adjoint_representations}.

To recall, a representation $\rho: \Gamma \to \GL(V)$ is \emph{strongly irreducible} if the connected component of the identity of the Zariski-closure of $\rho(\Gamma)$ acts irreducibly on $V$.
In our case, since $\Gamma$ will be a uniform lattice of $G$, it suffices to show that the action of the Lie algebra $\mathfrak g$ of $G$ on $\mathrm{Herm}_\mathbb F^0 (\mathbb F^n)$ is irreducible (where $n = p + q$).
Indeed, then from the Borel density theorem, $\Gamma$ is Zariski-dense in $G$, so $\tau(\Gamma)$ is Zariski-dense in $\tau(G)$.
In particular $\overline{\tau(\Gamma)}_0 = \tau(G)_0$, so $\tau$ is strongly irreducible if and only if $\tau(G)_0$ acts irreducibly on $\mathrm{Herm}_\mathbb F^0 (\mathbb F^n)$.
From elementary Lie group theory, we know that this is equivalent to the irreducibility of the differential of $\tau_*: \mathfrak g \to \mathfrak{gl}(V)$ at the identity.

The following notation will be used for this subsection:
\begin{align*}
    \mathfrak h &= \mathfrak{sl}(n, \mathbb F) = \begin{cases}
    \left\{ X \in \mathfrak{gl}(n, \mathbb F) : \mathrm{tr}(X) = 0 \right\}, \text{ when } \mathbb F \neq \mathbb H,\\    
    \left\{ X \in \mathfrak{gl}(n, \mathbb F) : \mathrm{tr}_\mathbb R(X) = 0 \right\}, \text{ when } \mathbb F = \mathbb H\\
    \end{cases}\\
    \mathfrak g &= \mathrm{Lie}(\mathrm{Aut}(\tilde h_{p,q}) \cap \mathrm{SL}(n,\mathbb F)) = \left\{ X \in \mathfrak{sl}(n, \mathbb F) : X = - I_{p,q}X^* I_{p,q} \right\}\\
    \mathfrak p &= \mathrm{Herm}_0^\mathbb F(\mathbb F^n) = \left\{ X \in \mathfrak{sl}(n, \mathbb F) : X = I_{p,q}X^* I_{p,q} \right\}.
\end{align*}
We begin with the real and the quaternionic cases:
\begin{proposition}
    When $\mathbb F$ is $\mathbb R$ or $\mathbb H$, the adjoint representation of $\mathfrak{g}$ on $\mathfrak p$ is irreducible.
\end{proposition}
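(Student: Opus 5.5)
The plan is to reduce to a statement about complex representations. A real subspace $W\subseteq\mathfrak p$ that is invariant under $\mathfrak g$ has complexification $W\otimes\mathbb C\subseteq\mathfrak p\otimes\mathbb C$, which is invariant under $\mathfrak g_{\mathbb C}=\mathfrak g\otimes\mathbb C$. Hence it suffices to prove that $\mathfrak p_{\mathbb C}$ is an irreducible $\mathfrak g_{\mathbb C}$-module. The advantage is that after complexifying, the signature $(p,q)$ disappears, and we are left with classical representations of $\mathfrak{so}(n,\mathbb C)$ and $\mathfrak{sp}(2n,\mathbb C)$.

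\emph{Real case.} Here $\mathfrak h_{\mathbb C}=\ssl(n,\mathbb C)$. Write $J=I_{p,q}$; the notation $I_{p,q}$ in the displayed definition of $\mathfrak g$ and $\mathfrak p$ denotes the matrix of the form, and we set $J:=I_{p,q}$ only to keep the formulas short. Then
\[
\mathfrak g_{\mathbb C}=\{X\in\ssl(n,\mathbb C): X^tJ+JX=0\}\cong\mathfrak{so}(n,\mathbb C),\qquad \mathfrak p_{\mathbb C}=\{X\in\ssl(n,\mathbb C): X^tJ=JX\}.
\]
The map $X\mapsto JX$ is a linear isomorphism from $\mathfrak p_{\mathbb C}$ onto $\{S\in\mathrm{Sym}_n(\mathbb C):\tr(JS)=0\}$. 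It intertwines the adjoint action $[Y,X]$ with the action $S\mapsto YS+SY^t$, because $J$ is an involution and $JY=-Y^tJ$ for $Y\in\mathfrak g_{\mathbb C}$.

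Next, conjugate by a diagonal matrix $D$ with $D^tJD=I_n$ (entries $1$ and $i$). This turns the form $J$ into the standard one. It identifies the module with $\mathrm{Sym}^2_0(\mathbb C^n)$, the symmetric tensors that are traceless with respect to the invariant quadratic form, under the standard action of $\mathfrak{so}(n,\mathbb C)$. This module is irreducible, being the highest weight module $V(2\varpi_1)$ by classical invariant theory. We need $n\geq 3$; the small cases are degenerate, since then $\mathfrak{so}(2)$ is abelian and the claim must be checked by hand or excluded. As a dimension check, $\dim_{\mathbb R}\mathfrak p=\tfrac{n(n+1)}{2}-1$.

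\emph{Quaternionic case.} Use the standard embedding $\mathbb H^{n\times n}\hookrightarrow\mathbb C^{2n\times 2n}$, obtained by writing $\mathbb H=\mathbb C\oplus j\mathbb C$. This gives $\ssl(n,\mathbb H)\otimes\mathbb C\cong\ssl(2n,\mathbb C)$, where the reduced trace corresponds to half the ordinary trace. The quaternionic Hermitian form $\tilde h_{p,q}$ becomes a complex bilinear skew form $\Omega_{p,q}$ on $\mathbb C^{2n}$. Then
\[
\mathfrak g_{\mathbb C}=\{X: X^t\Omega_{p,q}+\Omega_{p,q}X=0\}\cong\mathfrak{sp}(2n,\mathbb C),\qquad \mathfrak p_{\mathbb C}=\{X\in\ssl(2n,\mathbb C): X^t\Omega_{p,q}=\Omega_{p,q}X\}.
\]
The map $X\mapsto \Omega_{p,q}X$ sends $\mathfrak p_{\mathbb C}$ onto the skew matrices $A$ satisfying $\tr(\Omega_{p,q}^{-1}A)=0$. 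This identifies $\mathfrak p_{\mathbb C}$ with $\Lambda^2_0(\mathbb C^{2n})$, the complement of the invariant line spanned by the symplectic form. This is the irreducible $\mathfrak{sp}(2n,\mathbb C)$-module $V(\varpi_2)$ for $n\geq 2$. The dimension check is
\[
\dim_{\mathbb R}\mathfrak p=n+4\cdot\tfrac{n(n-1)}{2}-1=2n^2-n-1=\dim_{\mathbb C}\Lambda^2_0\mathbb C^{2n}.
\]

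The main obstacle is this quaternionic bookkeeping. One has to verify three things carefully. First, under the embedding, quaternionic conjugate transposition with respect to $\tilde h_{p,q}$ becomes exactly the $\Omega_{p,q}$-transpose after complexification. Second, the real trace condition corresponds to the $\Omega$-trace condition. Third, the resulting identification is equivariant. Once these are in place, the irreducibility of $\mathrm{Sym}^2_0$ and $\Lambda^2_0$ can be quoted from the standard representation theory of the classical complex Lie algebras. These facts can also be checked directly: any nonzero invariant subspace contains a weight vector for a Cartan subalgebra, and applying root vectors to it generates the whole module.
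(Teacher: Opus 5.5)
Your proposal is correct and follows essentially the paper's route. Like the paper, you complexify, identify $\mathfrak p_{\mathbb C}$ with $\mathrm{Sym}^2_0(\mathbb C^n)$ under $\mathfrak{so}(n,\mathbb C)$ (respectively with $\wedge^2_0(\mathbb C^{2n})$ under $\mathfrak{sp}(2n,\mathbb C)$), and quote their classical irreducibility, which the paper takes from Fulton--Harris; your conjugation by a diagonal matrix with entries $1$ and $i$ just makes explicit the passage from the $I_{p,q}$-twisted involution to the standard one, which the paper glosses over by working with $\sigma(X)=-X^*$.

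There are two small points. First, $X\mapsto JX$ actually intertwines $[Y,\cdot\,]$ with $S\mapsto -(Y^tS+SY)$, not with $S\mapsto YS+SY^t$; use $X\mapsto XJ$ to get the latter. Either way you obtain $\mathrm{Sym}^2_0$ of $\mathbb C^n$ or of its dual, and these are isomorphic as $\mathfrak{so}(n,\mathbb C)$-modules. Second, your caveat $n\geq 3$ in the real case is a genuine one that the paper omits: for $\mathfrak{so}(1,1)$ the two-dimensional $\mathfrak p$ splits into two invariant real lines on which $\mathfrak g$ acts with weights $\pm 2$.
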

\begin{proof}
    Let $\mathbb F = \mathbb H$ and $\omega$ be the standard symplectic form on $\mathbb C^{2n}$.
    We will make use of the Lie algebra isomorphism $\phi$ and the Lie algebra hommomorphism $\psi$, given by
    \[
    \begin{array}{r@{\;}c@{\;}l@{\qquad}r@{\;}c@{\;}l}
        \phi: \mathfrak{gl}(2n, \mathbb C) &\to& \mathbb C^{2n} \otimes \mathbb C^{2n} & \psi: \mathbb C^{2n} \otimes \mathbb C^{2n} &\to& \mathbb C\\
        A &\mapsto&  \omega(A \cdot, \cdot), &
        h &\mapsto&  \sum_{i=1}^n h(e_i, e_{n+i}) - h(e_{n+i}, e_i).
    \end{array}
    \]
    
    Letting $\sigma$ be the involutive automorphism of $\mathfrak{sl}(n, \mathbb H)$ given by $\sigma(X) = -X^*$, we obtain the splitting into ($\pm 1$)-eigenspaces.
    After complexifying and applying $\phi$, we obtain splittings of $\mathfrak{sl}(2n,\mathbb C)$ and of $\mathrm{ker}(\psi)$ into representations of $\mathfrak {sp}(2n, \mathbb C)$:
    \begin{align*}
        \mathfrak sl(n,\mathbb H) &= \mathfrak{u}(n, \mathbb H) \oplus \mathfrak p,\\
        \mathfrak{sl}(2n, \mathbb C) &= \mathfrak{sp}(2n, \mathbb C) \oplus \mathfrak{sl}(2n, \mathbb C)^{-\sigma},\\
        \mathrm{ker}(\psi) &= \mathrm{Sym}^2(\mathbb C^{2n}) \oplus \wedge^2_0(\mathbb C^{2n}),
    \end{align*}
    where $\mathrm{Sym}^2(\mathbb C^{2n})$ are the symmetric 2-tensors of $\mathbb C^{2n}$ and $\wedge^2_0(\mathbb C^{2n})$ is the intersection of $\mathrm{ker}(\psi)$ with the antisymmetric 2-tensors.
    A standard result of representation theory (see for instance \cite[Theorem 17.5]{fulton2013representation}) tells us that $\wedge^2_0(\mathbb C^{2n})$ is an irreducible representation of $\mathfrak{sp}(2n, \mathbb C)$., which is equivalent to the irreducibility of $\mathfrak{sl}(2n, \mathbb C)^{-\sigma} \simeq \mathfrak {sl}(n, \mathbb H)^{-\sigma}$.
    This concludes the case of $\mathbb F = \mathbb H$.

    For the real case, we let $n = p+q$ and proceed analogously.
    The involution $\sigma$ is now defined as $\sigma(X) = -X^t$, and define
    \[
    \begin{array}{r@{\;}c@{\;}l@{\qquad}r@{\;}c@{\;}l}
        \phi: \mathfrak{gl}(n, \mathbb C) &\to& \mathbb C^{n} \otimes \mathbb C^{n} & \psi: \mathbb C^{n} \otimes \mathbb C^{n} &\to& \mathbb C\\
        A &\mapsto&  Q(A \cdot, \cdot), &
        h &\mapsto&  \sum_{i=1}^n h(e_i, e_{i}),
    \end{array}
    \]
    where $Q$ is the standard $\mathbb R$-linear inner product on $\mathbb C^n$.
    Through complexification and application of $\phi$, we obtain three splittings into representations of $\mathfrak{so(n, \mathbb C)}$:
    \begin{align*}
        \mathfrak{sl}(n, \mathbb R) &= \mathfrak{so}(p,q) \oplus \mathfrak p,\\
        \mathfrak{sl}(n, \mathbb C)&= \mathfrak{so}(n, \mathbb C) \oplus \mathfrak{sl}(n, \mathbb C)^{-\sigma},\\
        \mathrm{ker}(\psi) &= \wedge^2 (\mathbb C^{n}) \oplus \mathrm{Sym}^2_0(\mathbb C^n) ,
    \end{align*}
    where $\mathrm{Sym}^2_0(\mathbb C^n)$ is the intersection of the kernel of $\psi$ with the symmetric tensors.
    Then we conclude by the fact (see for instance \cite[Exercise 18.7]{fulton2013representation}) that $\mathrm{Sym}^2_0(\mathbb C^n)$ is an irreducible representation of $\mathfrak{so}(n,\mathbb C)$.
\end{proof}

For the complex case, the proof is even simpler, essentially because Hermitian and anti-Hermitian (identified with $\mathfrak g$) forms are isomorphic as representations, by multiplication by $i$ (the same is not true for the real and quaternionic cases, as can be checked by comparing their dimensions).
More specifically, in $\mathfrak h = \mathfrak{sl}(n, \mathbb C)$, the involution $\sigma(X) = -I_{p,q}X^* I_{p,q}$ is a complex conjugate-linear involution, making $\mathfrak g = \mathfrak{su}(p,q)$ a real form of $\mathfrak h = \mathfrak{sl}(n, \mathbb C)$, and $\mathrm{Herm}_\mathbb C^0 \mathbb C^n$ equal to $i \mathfrak{su}(p,q)$.
Thus $i$ provides us with an isomorphism between $\mathfrak{su}(p,q)$ and $\mathrm{Herm}_\mathbb C^0(\mathbb C^n)$ as vector subspaces of $\mathfrak{sl}(n, \mathbb C)$ and as representations of $\mathfrak{su}(p,q)$.
With this in mind, the irreducibility of the action of $\mathfrak{su}(p,q)$ on $\mathrm{Herm}_\mathbb C^0(\mathbb C^n)$ follows from the following proposition.
\begin{proposition}
    Let $\mathfrak h$ be a complex simple Lie algebra with real form $\mathfrak h_0$.
    Then the action of $\mathfrak h_0$ on $i\mathfrak h_0$ by the adjoint representation is irreducible. 
\end{proposition}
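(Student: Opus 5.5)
The plan is to reduce to the irreducibility of the adjoint representation of $\mathfrak h_0$ on itself. Multiplication by $i$ is an $\mathbb R$-linear isomorphism $\mathfrak h_0 \to i\mathfrak h_0$. Because $\mathrm{ad}$ is complex linear on $\mathfrak h$, it intertwines the two adjoint actions: for $X, Y \in \mathfrak h_0$ we have $[X, iY] = i[X,Y]$. Hence $i\mathfrak h_0$ is isomorphic, as a representation of $\mathfrak h_0$, to the adjoint representation of $\mathfrak h_0$ on itself, and it suffices to show the latter is irreducible.

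Next I will verify that $\mathfrak h_0$ is simple as a real Lie algebra. Suppose $\mathfrak a \subseteq \mathfrak h_0$ is an ideal. Then its complexification $\mathfrak a \otimes \mathbb C = \mathfrak a + i\mathfrak a \subseteq \mathfrak h$ is a complex ideal of $\mathfrak h_0 \otimes \mathbb C = \mathfrak h$: it is stable under $\mathrm{ad}(\mathfrak h_0)$, and by complex linearity also under $\mathrm{ad}(i\mathfrak h_0)$. By simplicity of $\mathfrak h$, either $\mathfrak a + i\mathfrak a = 0$ or $\mathfrak a + i\mathfrak a = \mathfrak h$. Since $\mathfrak h_0$ is a real form, $\mathfrak h = \mathfrak h_0 \oplus i\mathfrak h_0$, so the sum $\mathfrak a + i\mathfrak a$ is direct and has real dimension $2\dim \mathfrak a$. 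Therefore $\mathfrak a = 0$ or $\dim \mathfrak a = \dim \mathfrak h_0$, i.e.\ $\mathfrak a = \mathfrak h_0$.

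Irreducibility of the adjoint action then follows at once, because $\mathfrak h_0$-invariant subspaces of $\mathfrak h_0$ are exactly its ideals. Transporting this back along $Y \mapsto iY$ shows that the action of $\mathfrak h_0$ on $i\mathfrak h_0$ is irreducible.

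The only delicate point is identifying the complexification of $\mathfrak a$ with $\mathfrak a + i\mathfrak a$ inside $\mathfrak h$, and checking that this sum is direct; both follow from $\mathfrak h_0 \cap i\mathfrak h_0 = 0$. For the application, one also has to note that $\mathfrak{sl}(n,\mathbb C)$ is simple and that $\mathfrak{su}(p,q)$ is a real form of it, which the paper has already observed. Everything else is routine.
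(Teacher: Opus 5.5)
Your proof is correct and uses the same key idea as the paper: the complex span of a real invariant subspace is a complex ideal of the simple algebra $\mathfrak h$, so it is $0$ or $\mathfrak h$, and a dimension count finishes the argument. The paper applies this directly to an invariant subspace $\mathfrak h_1 \subseteq i\mathfrak h_0$, forming the ideal $\mathfrak h_1 \oplus i\mathfrak h_1$, so your transport to $\mathfrak h_0$ via $Y \mapsto iY$ and the detour through simplicity of $\mathfrak h_0$ are valid but not needed.
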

\begin{proof}
    If $\mathfrak h_1$ is an $\mathfrak h_0$-invariant subspace of $i\mathfrak h_0$, then $\mathfrak h_1 \oplus i\mathfrak h_1$ is an ideal of $\mathfrak h$.
    Since $\mathfrak h$ is simple, $\mathfrak h_1 \oplus i\mathfrak h_1$ is either trivial or the whole $\mathfrak h$, so $\mathfrak h_1$ is either trivial or the whole $i\mathfrak h_0$.
\end{proof}

\subsection{Setup for computations with Hermitian forms}\label{sec:setup}
In this subsection, we will make heavy use of the identifications of \cref{lem:identifications}, and to be consistent in what follows, we shall need to fix the setting in which we will work, i.e.\ the choice of basis and reference form.

\begin{setting}\label{set:standard_hermitian_form}
Let $n = p+q$, $V$ be an $n$-dimensional vector space over $\mathbb F \in \{ \mathbb R, \mathbb C, \mathbb H \}$, and $e_1, \cdots, e_{n}$ be a basis of $V$, which we use to identify it with $\mathbb F^{n}$.
We denote with $h_n \in \mathrm{Herm}_\mathbb F(V)$ the standard positive definite Hermitian form on $V \cong \mathbb F^n$, and with $h_{n-1,1}, \tilde h_{p,q} \in \mathrm{Herm}_\mathbb F(V)$ the nondegenerate Hermitian forms of signature $(n-1,1)$ and $(p,q)$ respectively.
Coordinatewise, these forms are given by
\[
\begin{array}{c}
h_{n}(x, y) = \sum_{i=1}^{n} \bar x_i y_i \\[6pt]
h_{n-1,1}(x,y) = \bar x_1 y_{n} + \bar x_{n} y_1 + \sum_{i=2}^{n-1} \bar x_i y_i \\[6pt]
\tilde h_{p,q}(x,y) = \sum_{i=1}^{p} \bar x_i y_i - \sum_{i = p+1}^{n}\bar x_{i} y_{i}\\
\end{array}
\text{ for } \begin{cases}
    x = \sum_i x_i e_i, y = \sum_i y_i e_i , \text{ if } \mathbb F \in \{\mathbb R, \mathbb C\}\\
    x = \sum_i e_i x_i, y = \sum_i e_i y_i, \text{ if } \mathbb F = \mathbb H
\end{cases}
\]
The corresponding matrices with respect to the basis $e_1, \cdots, e_{n}$ are given by
\[
H_n = I_{n}, \quad
h_{n-1,1} = 
\begin{pmatrix}
0 & 0 & 1 \\
0 & I_{n-2} & 0 \\
1 & 0 & 0
\end{pmatrix}
, \quad
\tilde h_{p,q} = 
\begin{pmatrix}
I_p& 0 \\
0 & -I_q
\end{pmatrix}.
\]
Using the basis $e_1, \cdots, e_{n}$ to identify $\gl(V)$ with $\mathbb F^{n \times n}$, we have the following identifications:
\begin{align*}
    \mathrm{Aut}(h_{n-1,1}) &= \left\{ g \in \mathbb F^{n, n} : h_{n-1,1} g^* h_{n-1,1} = g^{-1} \right\},\\
    \mathrm{Aut}(\tilde h_{p,q}) &= \left\{ g \in \mathbb F^{n, n} : \tilde h_{p,q} g^* \tilde h_{p,q} = g^{-1} \right\}.
\end{align*}
While automorphism groups of forms with the same signature are isomorphic, it will be more convenient at times to work with one rather than the other.
Depending on whose action we consider on $\mathrm{Herm}_\mathbb F(V)$, we will need to use the corresponding form to identify Hermitian forms and matrix representatives of self-adjoint transformations (see \cref{rem:reference_form_and_basis}).
More concretely:
\begin{enumerate}
    \item When (in \cref{sec:hyperbolic_spaces}) doing calculations with hyperbolic spaces $\mathbf H^n_\mathbb F$, we will work with $\mathrm{Aut}(h_{n-1,1})$, and identify $\mathrm{Herm}_\mathbb F(V)$ and the space of self-adjoint transformations with respect to $h_{n-1,1}$, i.e.\
    \begin{align}
        \begin{split}    
            \mathrm{Herm}_\mathbb F(V) &\cong \left\{ T \in \mathfrak{gl}(V): T^{*_{h_{n-1,1}}} = T \right\}\\
            &= \left\{ A \in \mathbb F^{n \times n} : h_{n-1,1} A^* h_{n-1,1} = A \right\}\\
            &= \left\{
                X = \begin{pmatrix}
                    X_{11} & X_{12} & X_{13}\\
                    X_{21} & X_{22} & X_{12}^*\\
                    X_{31} & X_{21}^* & \overline X_{11}
                \end{pmatrix} :
                \begin{array}{c}
            X_{11} \in \mathbb F, X_{13}, X_{31} \in \mathbb R,\\
            X_{12} \in \mathbb F^{1 \times (n-1)},
            X_{21} \in \mathbb F^{(n-1) \times 1},\\
            X_{22} \in \mathbb F^{(n-1) \times (n-1)},
            X_{22}^* = X_{22}
        \end{array}
        \right\}\label{eq:tilde_herm_n_1}
    \end{split}\\
    \begin{split}
        \mathrm{Lie}(\mathrm{Aut}(h_{n-1,1})) &= 
    \left\{ X \in \mathbb F^{n \times n}: X^{*_{h_{n-1,1}}} = - X \right\}\\
    = &\left\{
                X = \begin{pmatrix}
                    X_{11} & X_{12} & X_{13}\\
                    X_{21} & X_{22} & -X_{12}^*\\
                    X_{31} & -X_{21}^* & -\overline X_{11}
                \end{pmatrix} :
                \begin{array}{c}
            X_{11}, X_{13}, X_{31} \in \mathbb F, X_{12} \in \mathbb F^{1 \times (n-1)}, \\
            X_{21} \in \mathbb F^{(n-1) \times 1}, X_{22} \in \mathbb F^{(n-1) \times (n-1)}\\
            \overline X_{13} = -X_{13}, X_{22}^* = -X_{22}, \overline X_{31} = -X_{31}
        \end{array}
        \right\}.\notag
    \end{split}
    \end{align}
    This will be useful in \cref{lem:cartan_projections_automorphism_group}, \cref{ex:pozzetti}, and \cref{prop:limit_map_adjoint_representation}.
    \item When doing calculations on $\mathrm{Herm}_\mathbb F^0(V)$, we will work with $\mathrm{Aut}(\tilde h_{p,q})$, and identify $\mathrm{Herm}_\mathbb F(V)$ and the space of self-adjoint transformations with respect to $\tilde h_{p,q}$, i.e.\ 
    \begin{align*}
        \mathrm{Herm}_\mathbb F(V) &\cong\left\{ T \in \mathfrak{gl}(V): T^{*_{\tilde h_{p,q}}} = T \right\}\\
        &=\left\{ A \in \mathbb F^{n \times n} : \tilde h_{p,q} A^* \tilde h_{p,q} = A \right\}\\
        &= \left\{ X = \begin{pmatrix}
            A & B \\
            -B^* & D
            \end{pmatrix} :
            \begin{array}{c}
                A \in \mathbb F^{p \times p}, B \in \mathbb F^{p \times q}, D \in \mathbb F^{q \times q},
                A^* = A, D^* = D
            \end{array} \right\}.
    \end{align*}
    This will be useful in \cref{prop:automorphism_group_action_on_traceless_hermitian_forms}
    and \cref{prop:adjoint_representation_convex_cocompact}.
\end{enumerate}
\end{setting}

\subsection{Automorphism group action on Hermitian forms}\label{sec:adjoint_representations}
In this subsection we will be keep studying the action of the automorphism group of a non-degenerate Hermitian form on the space of traceless Hermitian forms.
Under the conventions of \cref{sec:setup}, we will show that it can give us $\mathbb H^{p,q}$-convex-cocompact representations of maximal virtual cohomological dimension that also contradict \cite[Lemma 6.8]{pozzetti_anosov_2023}.
As a first step, the next proposition tells us that the image of $\tau$ lies in fact in $\mathrm{SO}\left(\mathrm{Herm}^0_\mathbb F(V)\right)$, and recalls that it is strongly irreducible.
\begin{proposition}\label{prop:automorphism_group_action_on_traceless_hermitian_forms}
    Let $\tilde h_{p,q} \in \mathrm{Herm}_\mathbb F(V)$ be the standard non-degenerate Hermitian form of signature $(p,q)$ in the fixed basis of $V$:
    \[
    \tilde h_{p,q}\left( \sum_i x_i e_i, \sum_i y_j e_j \right) = \sum_{i=1}^p \bar x_i y_i - \sum_{i=p+1}^{n} \bar x_i y_i.
    \]
    Then the representation $\tau: G \to \GL\left(\mathrm{Herm}^0_\mathbb F(V)\right)$ defined in \cref{def:representation_tau} has the following properties:
    \begin{enumerate}
    \item Its restriction to any uniform lattice $\Gamma$ of $G$ is strongly irreducible.
    \item It preserves the inner product $\langle \cdot, \cdot \rangle$ on $\mathrm{Herm}^0_\mathbb F(V)$ defined as
    \[
    \langle A, B \rangle = -\tr_\mathbb R(AB),
    \]
    with signature
    \[
    \left(\dim_\mathbb R (\mathbb F) \cdot pq, \dim_\mathbb R (\mathbb F) \cdot \left(\frac{p(p-1)}{2} + \frac{q(q-1)}{2}\right) + p + q - 1\right),
    \]
    giving rise to a representation
    \[
    \tau: G \to \SO\left(\mathrm{Herm}^0_\mathbb F(V), \langle \cdot, \cdot \rangle\right),
    \]
    \end{enumerate}
\end{proposition}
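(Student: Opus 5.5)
The first item has essentially been established in \cref{sec:strong_irreducibility}. Since $\Gamma$ is a uniform lattice of $G$, the Borel density theorem makes $\Gamma$ Zariski-dense in $G$, so the identity component of the Zariski closure of $\tau(\Gamma)$ equals $\tau(G)_0$. Its irreducibility on $\mathfrak p = \mathrm{Herm}^0_\mathbb F(V)$ is equivalent to the irreducibility of the differential $\ad|_{\mathfrak g}$ on $\mathfrak p$. That irreducibility was shown case by case: for $\mathbb F \in \{\mathbb R, \mathbb H\}$ via the complexified splittings into $\mathrm{Sym}^2_0$ and $\wedge^2_0$, and for $\mathbb F = \mathbb C$ via the real-form argument with $\mathfrak p = i\mathfrak{su}(p,q)$. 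One caveat: Borel density needs $G$ to have no compact factors, which holds when $p,q\ge 1$. If $p$ or $q$ is zero, every lattice is finite and the statement degenerates, so I will assume $p,q \geq 1$ throughout.

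For the second item, invariance is immediate. By \cref{lem:identifications}, $\tau(g)A = gAg^{-1}$, and the reduced trace is conjugation invariant: $\mathrm{Re}\,\tr$ is invariant for quaternionic matrices because $\mathrm{Re}(xy)=\mathrm{Re}(yx)$ in $\mathbb H$. Hence $\tr_\mathbb R(gAg^{-1}gBg^{-1}) = \tr_\mathbb R(AB)$. The same identity gives symmetry of the form, $\tr_\mathbb R(AB)=\tr_\mathbb R(BA)$. For non-degeneracy and the signature, I will use the block description of \cref{set:standard_hermitian_form}: $X = \begin{pmatrix} A & B\\ -B^* & D\end{pmatrix}$ with $A^*=A$, $D^*=D$ and $\tr_\mathbb R A + \tr_\mathbb R D = 0$. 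A direct computation gives
\[
-\tr_\mathbb R(X^2) = -\tr_\mathbb R(A^2) - \tr_\mathbb R(D^2) + 2\tr_\mathbb R(BB^*).
\]
Here $\tr_\mathbb R(BB^*) = \sum |b_{ij}|^2$ is positive definite on $\mathbb F^{p\times q}$, which has real dimension $\dim_\mathbb R(\mathbb F)\,pq$. Likewise $\tr_\mathbb R(A^2) = \sum |a_{ij}|^2$ is positive definite on Hermitian matrices, so the form is negative definite on the block-diagonal traceless part. The three blocks are orthogonal, and polarization shows there are no cross terms.

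It remains to count the real dimension of $\{(A,D)\ \text{Hermitian} : \tr_\mathbb R A + \tr_\mathbb R D = 0\}$. A Hermitian $m\times m$ matrix over $\mathbb F$ has real diagonal entries and arbitrary off-diagonal upper entries in $\mathbb F$, giving dimension $m + \dim_\mathbb R(\mathbb F)\, m(m-1)/2$. Summing over $m=p$ and $m=q$ and subtracting one for the trace condition gives $\dim_\mathbb R(\mathbb F)\big(\tfrac{p(p-1)}2+\tfrac{q(q-1)}2\big)+p+q-1$, as claimed. So $\tau$ lands in $\OO(p',q')$.

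The main obstacle is upgrading this to $\SO(p',q')$. I would argue by connectedness: $G$ is connected in the complex and quaternionic cases, and its image lies in the identity component of the orthogonal group. In the real case $G = \SO(p,q)$ may be disconnected, so I would either check that $\det\tau(g) = 1$ directly on a representative of each component, for instance a diagonal matrix with entries $\pm1$ acting by sign changes on entries of $X$, or note that the representation was already read as landing in $\SO$ up to this determinant check. For a diagonal sign matrix $s$, $\tau(s)$ multiplies each off-diagonal pair $x_{ij}, x_{ji}$ by $s_is_j$, so the sign changes come in pairs and the determinant is $1$ by the symmetry of the parametrization.
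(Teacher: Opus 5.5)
Your argument for both items is the paper's. Item~1 is deferred to \cref{sec:strong_irreducibility}, and the signature comes from the same orthogonal splitting of $\mathrm{Herm}^0_\mathbb F(V)$ into the off-diagonal block $\mathfrak p_0\cap\mathfrak h^{\theta_n}$ (positive definite, real dimension $\dim_\mathbb R(\mathbb F)\,pq$) and the traceless block-diagonal part $\mathfrak p_0\cap\mathfrak p_n$ (negative definite), with the same dimension count.

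You go beyond the paper by justifying that the image lies in $\SO(p',q')$ and not just $\OO(p',q')$; the paper only asserts this. Your connectedness argument for $\mathbb F\in\{\mathbb C,\mathbb H\}$ is fine. The real-case argument, however, is wrong as written. In $\mathrm{Herm}^0_\mathbb R(V)$ the entries $x_{ij}$ and $x_{ji}=\pm x_{ij}$ are not independent coordinates. Each unordered pair $\{i,j\}$ contributes a single coordinate, multiplied by $s_is_j$, so the sign changes do not come in pairs, and
\[
\det\tau(s)=\prod_{i<j}s_is_j=\Bigl(\prod_{i}s_i\Bigr)^{n-1}=(\det s)^{n-1}.
\]
This equals $1$ only because $\det s=1$, which your reasoning never uses. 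As stated, your argument would equally show $\det\tau(s)=1$ for the reflection $s=\diag(-1,1,\dots,1)\in\OO(p,q)$, where in fact $\det\tau(s)=-1$ whenever $n$ is even. Add this computation, together with the remark that each of the (at most two) components of $\SO(p,q)$ contains a diagonal sign matrix of determinant $1$, and the step is complete.

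Your caveat on item~1 also misses one degenerate case. For $\mathbb F=\mathbb R$ and $p=q=1$, the group $G=\SO(1,1)$ is abelian. It preserves the two isotropic lines spanned by $\begin{pmatrix}1&1\\-1&-1\end{pmatrix}$ and $\begin{pmatrix}1&-1\\1&-1\end{pmatrix}$ in the $2$-dimensional space $\mathrm{Herm}^0_\mathbb R(\mathbb R^2)$, so strong irreducibility fails. This gap is inherited from the paper's real-case argument, since $\mathrm{Sym}^2_0(\mathbb C^2)$ is reducible under the abelian $\mathfrak{so}(2,\mathbb C)$. You should assume $p+q\geq 3$ when $\mathbb F=\mathbb R$.
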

\begin{proof}
    The proof of strong irreducibility was done in \cref{sec:strong_irreducibility}.
    Keeping the notation from \cref{sec:adjoint_representation_perspective}, we let $\mathfrak h = \ssl(n, \mathbb F)$, and consider the involutions $\theta_0(X) = -X^{*_{\tilde h_{p,q}}} = -I_{p,q} X^* I_{p,q}$ and $\theta_n(X) = -X^*$ for $X \in \mathfrak h$.
    denoting by $\mathfrak p_0 = \mathfrak h^{-\theta_0}, \mathfrak p_n = \mathfrak h^{-\theta_n}$ the anti-fixed points of each involution, we can compute explicitly that
    \begin{align}\label{eq:cartan_decomposition_hermitian_forms}
        \begin{split}
        \mathfrak p_0 &= \mathrm{Herm}^0_\mathbb F(V) = \left\{X \in \mathfrak{sl}(n,\mathbb F): X^{*_{\tilde h_{p,q}}} = X \right\}\\ 
        &=
         \left\{ X = \begin{pmatrix}
            A & B \\
            -B^* & D
            \end{pmatrix} :
            \begin{array}{c}
                A \in \mathbb F^{p \times p}, B \in \mathbb F^{p \times q}, D \in \mathbb F^{q \times q},\\
                A^* = A, D^* = D, \tr_\mathbb F(A) = - \tr_\mathbb F(D)
            \end{array}
            \right\}\\
        \mathfrak p_0 \cap \mathfrak h^{\theta_n} &= \mathrm{Herm}^0_\mathbb F(V) \cap \mathfrak h^{\theta_n} \\&= \left\{ X = \begin{pmatrix}
            0 & B \\
            -B^* & 0
            \end{pmatrix} : B \in \mathbb F^{p \times q} \right\}\\
        \mathfrak p_0 \cap \mathfrak p_n &= \mathrm{Herm}^0_\mathbb F(V) \cap \mathfrak p_n \\ &= \left\{ X = \begin{pmatrix}
            A & 0 \\
            0 & D
            \end{pmatrix} :
            \begin{array}{c}
                A \in \mathbb F^{p \times p}, D \in \mathbb F^{q \times q},\\
                A^* = A, D^* = D, \tr_\mathbb F(A) = - \tr_\mathbb F(D)
            \end{array} 
            \right\}    
        \end{split}
    \end{align}

    Since the inner product $\langle \cdot, \cdot \rangle$ is preserved by conjugation, $\rho(\mathrm{Aut}(\tilde h_{p,q}))$ is contained in the special orthogonal group $\mathrm{SO}\left(\mathrm{Herm}^0_\mathbb F(V), \langle \cdot, \cdot \rangle\right)$.
    To calculate its signature, we use the two summands of the Cartan decomposition of $\mathfrak g$ with respect to $\theta_0 \circ \theta_n$:
    \[
    \mathrm{Herm}_\mathbb F^0 (V) 
        = (\mathrm{Herm}_0^\mathbb F(V) \oplus \mathfrak p_n) \oplus \mathfrak (\mathrm{Herm}_0^\mathbb F(V) \cap \mathfrak h^{\theta_n}).
    \]
    It is straightforward to check that the restriction of $\langle \cdot, \cdot \rangle$ to $\mathrm{Herm}_0^\mathbb F(V) \cap \mathfrak h^{\theta_n}$ is positive definite, while its restriction to $\mathrm{Herm}_0^\mathbb F(V) \cap \mathfrak p_n$ is negative definite, and that they are orthogonal to each other.
    Hence we have that
    \begin{align*}
        \dim_\mathbb R \left( \mathrm{Herm}^0_\mathbb F(V) \cap \mathfrak h^{\theta_n} \right) &= 
        \dim_\mathbb R (\mathbb F) p q\\
        \dim_\mathbb R \left( \mathrm{Herm}^0_\mathbb F(V) \cap \mathfrak p_n \right) &= 
        \dim_\mathbb R(\mathbb F) \left( \frac{p(p-1)}{2} + \frac{q(q-1)}{2} \right) + p + q - 1
    \end{align*}
\end{proof}
The following proposition is the main result of this section, showing that $\tau$ admits an equivariant spacelike embedding of the symmetric space $G/K$ into a pseudo-Riemannian hyperbolic space $\mathbf H^{p',q'}$.
Moreover, the representation will be $\mathbf H^{p',q'}$-convex-cocompact exactly when $G$ is of rank 1, i.e.\ $\min\{p,q\} = 1$.
Note that here the restriction to the intersection $\mathrm{SL}(V) \cap \mathrm{Aut}(\tilde h_{p,q})$ is essential, because the embedding criterion of \cref{lem:criterion_spacelike_cocompact} requires that $G$ is semisimple. 
\begin{proposition}
    Let $V$ be a finite dimensional vector space over $\mathbb F \in \{ \mathbb R, \mathbb C, \mathbb H \}$, $\tilde h_{p,q} \in \mathrm{Herm}_\mathbb F(V)$ be a non-degenerate Hermitian form of signature $(p,q)$, and set
    \[
    p' = \dim_\mathbb R (\mathbb F) \cdot pq, \quad q' = \dim_\mathbb R (\mathbb F) \cdot \left(\frac{p(p-1)}{2} + \frac{q(q-1)}{2}\right) + p + q - 1.
    \]
    \begin{enumerate}
        \item The representation $\tau: G \to \SO\left(p', q'\right)$ defined in \cref{def:representation_tau}  admits a $G$-equivariant embedding $G/K \hookrightarrow \mathbf H^{p',q'-1}$ whose image $M^{p'}$ is an invariant complete connected spacelike embedded submanifold of dimension $p'$ in $\mathbf H^{p',q'-1}$.
        \item If moreover $\Gamma \subseteq G$ is a uniform lattice, then:
        \begin{enumerate}[label=(\roman*)]
        \item $\tau(\Gamma)$ acts cocompactly on $\tilde M^{p'}$.
        \item The restriction of $\tau$ to $\Gamma$ is $\mathbf H^{p',q'-1}$-convex-cocompact if and only if $\min\{p,q\} = 1$, in which case it is $\mathbf H^{p',q'-1}$-convex-cocompact of maximal virtual cohomological dimension.
    \end{enumerate}
    \end{enumerate}
\end{proposition}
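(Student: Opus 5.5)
The plan is to build the embedding as an orbit map. With the form $\langle A,B\rangle=-\tr_\mathbb R(AB)$ from \cref{prop:automorphism_group_action_on_traceless_hermitian_forms}, we need a vector $X_0\in\mathrm{Herm}^0_\mathbb F(V)$ that is negative and whose stabilizer in $G$ is exactly a maximal compact subgroup $K$. The natural choice is
\[
X_0=\begin{pmatrix} q I_p & 0\\ 0 & -pI_q\end{pmatrix},
\]
which is traceless and lies in $\mathfrak p_0\cap\mathfrak p_n$, where the form is negative definite. Its centralizer in $G$ is $S(\mathrm U(p,\mathbb F)\times\mathrm U(q,\mathbb F))=K$. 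Define $f(gK)=[\tau(g)X_0]\in\mathbf H^{p',q'-1}$. This map is well defined, injective and $\tau$-equivariant. Injectivity needs the stabilizer of the line $[X_0]$ to be $K$; since $g$ preserves the trace form, $gX_0g^{-1}=\lambda X_0$ forces $\lambda=\pm1$, and $\lambda=-1$ is impossible by comparing eigenvalue multiplicities when $p\neq q$ (a short extra check is needed when $p=q$). The image of $G/K$ then has dimension $\dim\mathfrak g-\dim\mathfrak k=\dim_\mathbb R(\mathbb F)pq=p'$.

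Next we check that $f$ is a spacelike immersion. By equivariance it suffices to look at the base point. The differential at $eK$ is $\mathfrak p=\mathfrak g\cap\mathfrak p_n\ni Y\mapsto[Y,X_0]$, taken modulo $X_0$. For $Y=\begin{pmatrix}0&B\\ B^*&0\end{pmatrix}$, or the appropriate sign convention in $\mathfrak g$, the bracket $[Y,X_0]$ is off-diagonal with block $-(p+q)B$. It therefore lies in $\mathfrak p_0\cap\mathfrak h^{\theta_n}$, where the form is positive definite, and is orthogonal to $X_0$. Since the tangent space of $\mathbf H^{p',q'-1}$ at $[X_0]$ is $X_0^\perp$, the differential is injective with spacelike image. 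Equivariance then makes $f$ a spacelike immersion everywhere. \Cref{lem:criterion_spacelike_cocompact}, applied with $\rho=\tau$, which is injective up to the finite centre (we can pass to $\tau(G)$ or note that the centre acts trivially on $G/K$), now gives the proper embedding, completeness, and, for a uniform lattice, the cocompact action. This settles part 1 and part 2(i).

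For 2(ii), first suppose $\min\{p,q\}\ge2$. Then $\Gamma$ is a uniform lattice in a higher-rank symmetric space. It contains $\mathbb Z^2$ and is not Gromov hyperbolic. Since $\tau|_\Gamma$ is irreducible, \cref{fact:convex_cocompact_hyperbolic} rules out convex-cocompactness. Now suppose $\min\{p,q\}=1$. Then $G/K=\mathbf H^{n-1}_\mathbb F$ and $\Gamma$ is hyperbolic, with $\partial_\infty\Gamma\cong\mathbb S^{p'-1}$ and $\mathrm{vcd}(\Gamma)=p'$, which is maximal. To get convex-cocompactness we would invoke the characterization in \cite[Corollary 1.14]{beyrer2023}: a hyperbolic group of vcd $p'$ acting properly and cocompactly on a $p'$-dimensional complete spacelike graph in $\hat{\mathbf H}^{p',q'-1}$ is $\mathbf H^{p',q'-1}$-convex-cocompact of maximal vcd. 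What remains is to check that the lift of $M$ to $\hat{\mathbf H}^{p',q'-1}$ is such a graph in the warped product coordinates of \cref{lem:warped_product_structure}, that is, a graph over $\mathbb B^{p'}$.

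That graph property is the step I expect to be the main obstacle. I would first try the standard fact that a complete spacelike $p'$-submanifold projects diffeomorphically onto the $\mathbb B^{p'}$ factor: the projection is a local diffeomorphism because the manifold is spacelike, and it is proper by completeness together with the metric comparison in \cref{lem:warped_product_structure}. This is essentially the content of \cite[Proposition 3.5, Lemma 3.7]{collier2019geometry} in higher dimension.
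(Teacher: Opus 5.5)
Your proposal is correct and follows the paper's proof essentially step by step. It uses the same base point (the paper takes $x_0=\mathrm{diag}(I_p,-\tfrac{p}{q}I_q)=X_0/q$), the same stabilizer and spacelike-differential computation, \cref{lem:criterion_spacelike_cocompact}, and \cref{fact:convex_cocompact_hyperbolic} together with Beyrer--Kassel for 2(ii); your handling of $\lambda=-1$ and of the finite kernel of $\tau$ fills in details the paper skips. The one divergence is in the converse of 2(ii): the paper applies Beyrer--Kassel's Corollary 1.11 directly to the proper cocompact action on the complete spacelike $p'$-submanifold, so the graph-over-$\mathbb B^{p'}$ step you flag as the main obstacle is not needed (your projection/path-lifting argument for it is nonetheless the standard one and works).
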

\begin{proof}
    We begin by looking for a $\tau$-equivariant injection
    \[
    f : G/K \to \mathbf H^{p',q'-1},
    \]
    where $K$ is the maximal compact subgroup of $G$.
    Equivalently, we need a point $\mathbb Rx_0 \in \mathbf H^{p',q'-1}$ whose stabilizer in $G$ is $K$, since then we can take $f(gK) = \tau(g) \mathbb R x_0$, which is clearly well-defined, injective, and $\tau$-equivariant.

    Recall that the maximal compact subgroup of $G$ is given by
    \[
    K = \left\{
        \begin{pmatrix}
            A & 0 \\
            0 & D
        \end{pmatrix} : 
        \left.
            (A, D) \in
        \begin{cases}
            \mathrm{S}\left(\mathrm{O}(p) \times \mathrm{O}(q)\right), & \mathbb F = \mathbb R\\
            \mathrm{S}\left(\mathrm{U}(p) \times \mathrm{U}(q)\right), & \mathbb F = \mathbb C\\
            \mathrm{Sp}(p) \times \mathrm{Sp}(q), & \mathbb F = \mathbb H
        \end{cases}
        \right\} 
        \right\} \simeq
        \begin{cases}
            \mathrm{S}\left(\mathrm{O}(p) \times \mathrm{O}(q)\right), & \mathbb F = \mathbb R\\
            \mathrm{S}\left(\mathrm{U}(p) \times \mathrm{U}(q)\right), & \mathbb F = \mathbb C\\
            \mathrm{Sp}(p) \times \mathrm{Sp}(q), & \mathbb F = \mathbb H
        \end{cases}
    \]
    In particular, it coincides with the intersection of $G$ with the block diagonal matrices.
    We let
    \[
    x_0 = \begin{pmatrix}
        I_p & 0 \\
        0 & -\frac{p}{q}I_q
    \end{pmatrix} \in \mathrm{Herm}^0_\mathbb F(V),
    \]
    which satisfies $\langle x_0, x_0 \rangle = -p(p+q)/q < 0$, so $\mathbb R x_0 \in \mathbf H^{p', q'-1}$.
    It is clearly fixed by the action of $K$, since $\tau(k)x_0 = k x_0 k^{-1} = x_0$.
    On the other hand, if there exists some $\lambda \in \mathbb R$ such that $\tau(g)x_0 = \lambda x_0$, then $\lambda$ must be $1$, and $g$ must commute with $x_0$, which implies that $g \in K$.
    Thus $K = \mathrm{Stab}_G(\mathbb R x_0)$, and we can define $f(gK) = \tau(g) \mathbb R x_0$ as desired.

    To conclude that $f$ is an embedding, by \cref{lem:criterion_spacelike_cocompact} it suffices to show that it is spacelike (and thus an immersion).
    And because of $G$-equivariance, it suffices to check this only at a point.
    Indeed, 
    \begin{align*}
        \text{for } X = \begin{pmatrix}
            0 & B \\
            B^* & 0
        \end{pmatrix} \in \mathfrak p, \text{ we have }
        \d_K f(X + \mathfrak k) = \tau_*(X) x_0 = \left(1 + \frac{p}{q}\right) \begin{pmatrix}
            0 & -B \\
            B^* & 0
        \end{pmatrix}.
    \end{align*}
    Hence
    $
        \langle \d_K f(X + \mathfrak k), \d_K f(X + \mathfrak k) \rangle = 2 \left( 1 + \frac{p}{q} \right)^2 \tr(BB^*) \geq 0,
    $
    with equality if and only if $X = 0$.
    
    In case where $\tau$ is $\mathbf H^{p',q'-1}$-convex-cocompact, then by \cref{fact:convex_cocompact_hyperbolic}, $\Gamma$ is Gromov hyperbolic, which is equivalent to $G$ having rank $1$, i.e.\ $\min\{p,q\} = 1$. 
    If on the other hand $\min\{p,q\} = 1$, then $\Gamma$ is hyperbolic. Since we have shown that it acts properly discontinuously and cocompactly on a spacelike submanifold of $\mathbf H^{p', q'-1}$, \cite[Corollary 1.11]{beyrer2023} implies that it is $\mathbf H^{p',q'-1}$-convex-cocompact.
\end{proof}
\begin{remark}
For the rest of this subsection, we will switch the form $h_{n-1,1}$  for the coordinate representations of $\mathrm{Aut}(h_{n-1,1})$, $\mathbf H^{n-1}_\mathbb F$, and $\mathrm{Herm}_\mathbb F^0(V)$, which will be more practical for computations on the hyperbolic space.    
\end{remark}
Restricting to the rank-one case, we obtain a concrete description of the limit map and its tangent space.
To do this we need to consider three actions of the automorphism group: the one on the hyperbolic space $\mathbf H_\mathbb F^n$, the second on the space of traceless Hermitian forms $\mathrm{Herm}^0_\mathbb F(V)$ (which is identified with the space of self-adjoint transformations, as explained in \cref{sec:hermitian_forms_definitions}), and the third on its minimal parabolic subgroups.
The first is given by multiplication on the left, while the second and third are given by conjugation:
\begin{align*}
    g \cdot \mathbb F x &= \mathbb F g x,\\
    g \cdot X &= g X g^{-1},\\
    g \cdot P &= g P g^{-1},
\end{align*}
where $g \in G$, $\mathbb F x \in \mathbf H_\mathbb F^n$, $X \in \mathrm{Herm}^0_\mathbb F(V)$, and $P$ is a minimal parabolic subgroup of $G$.
\begin{proposition}\label{prop:limit_map_adjoint_representation}
    Consider the map
    \begin{align*}
        \xi^1: G/P &\to \mathbb P(\mathrm{Herm}^0_\mathbb F(V))\\
        gP &\mapsto \mathbb R g x_0 g^{-1},
    \end{align*}
    where
    \[
    x_0 = \begin{pmatrix}
        0 & 0 & \cdots & 0 & 1\\
        0 & 0 & \cdots & 0 & 0\\
        \vdots & \vdots & \ddots & \vdots & \vdots\\
        0 & 0 & \cdots & 0 & 0\\
        0 & 0 & \cdots & 0 & 0
    \end{pmatrix} \in \mathrm{Herm}^0_\mathbb F(V),
    \]
    and $P = \mathrm{Stab}_G([1:0: \cdots: 0])$ is the minimal parabolic group i.e.\ the stabilizer of $[1:0:\cdots:0] \in \partial_\infty \mathbf H^{n-1}_\mathbb F$ in $G$. 
    The following hold:
    \begin{enumerate}
    \item $\xi^1$ is smooth and $G$-equivariant.
    \item The tangent space of the image at the point $\xi^1(gP)$ is given by
    \[
    T_{\xi^1(gP)} \xi^1(\partial_\infty \Gamma) = \left( g [\mathfrak g, x_0] g^{-1} \right)/\xi^1(gP), \text{ for } x \in \partial_\infty \Gamma.
    \]
    \item If $\Gamma \leq G$ is a uniform lattice, then under the identification $\partial_\infty \Gamma \simeq G/P$, the map $\xi^1$ is the projective part of the limit map of $\tau|_\Gamma$.
    \end{enumerate}
\end{proposition}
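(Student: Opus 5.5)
The plan is to prove the three items in order, working throughout in the coordinates of \cref{set:standard_hermitian_form} with reference form $h_{n-1,1}$. This is consistent with the remark preceding the statement.

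\textbf{Well-definedness and item 1.} First check that $x_0 = E_{1n}$ is indeed $h_{n-1,1}$-self-adjoint and traceless. In the block description \eqref{eq:tilde_herm_n_1}, $x_0$ is the entry $X_{13}=1\in\mathbb R$ with all other blocks zero, so this holds. Next show that $P$ preserves the line $\mathbb R x_0$. An element $g\in P$ fixes $[e_1]$, so $ge_1 = e_1\lambda$. Since $g$ preserves $h_{n-1,1}$, it also preserves $e_1^{\perp}=\langle e_1,\dots,e_{n-1}\rangle$. Now $x_0 v = e_1\, v_n$, and $v_n$ is, up to conjugation, $h_{n-1,1}(e_1,v)$. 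Hence $x_0$ is, as a real-linear map, the rank-one operator $v\mapsto e_1 h_{n-1,1}(e_1,v)$. Consequently $g x_0 g^{-1} v = ge_1\, h(e_1,g^{-1}v) = ge_1\, h(ge_1,v) = e_1\lambda\bar\lambda\, h(e_1,v) = |\lambda|^2 x_0 v$, which works even in the quaternionic case. So $\xi^1$ is well defined. It is $G$-equivariant by construction. It is smooth because it is the orbit map of the smooth action on $\mathbb P(\mathrm{Herm}^0_\mathbb F(V))$ descended to the homogeneous manifold $G/P$. The same formula shows that $\xi^1(gP)$ depends only on the isotropic line $ge_1\mathbb F$, namely it is the line of $v\mapsto w\,h(w,v)$ for $w\in ge_1\mathbb F$. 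This gives injectivity, so $\xi^1$ is a smooth embedding of the compact manifold $G/P$.

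\textbf{Item 2.} Differentiate the orbit map $g\mapsto g x_0 g^{-1}$ at the identity. Its image is $[\mathfrak g, x_0]$, which contains $\mathbb R x_0$ because the Cartan element $\mathrm{diag}(1,0,\dots,0,-1)\in\mathfrak g$ scales $x_0$ by $2$. Passing to the projectivization, the tangent space at $[x_0]$ is therefore $[\mathfrak g,x_0]/\mathbb R x_0$, using the standard identification of $T_{[v]}\mathbb P$ with $\mathrm{Hom}(\mathbb Rv, W/\mathbb Rv)$. At a general point, equivariance gives $T_{\xi^1(gP)} = \mathrm{Ad}_g([\mathfrak g,x_0])/\xi^1(gP)$. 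As a sanity check, compute $\dim[\mathfrak g,x_0] = \dim\mathfrak g-\dim\mathfrak z_{\mathfrak g}(x_0)$ in the block form. It should equal $\dim(G/P)+1 = \dim_{\mathbb R}\mathbb F\cdot(n-1)$, which matches $p'$ when $p=n-1,q=1$.

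\textbf{Item 3.} Here $\Gamma$ is a uniform lattice in the rank-one group $G$. The orbit map $\Gamma\to\mathbf H^{n-1}_\mathbb F$ is a quasi-isometry by Milnor--\v{S}varc, which gives the identification $\partial_\infty\Gamma\simeq\partial_\infty\mathbf H^{n-1}_\mathbb F\simeq G/P$. I would use the characterization in \cref{thm:boundary_map}: $\xi^1_\tau(x)=\lim U_1(\tau(\gamma_m))$ along a quasi-geodesic $\gamma_m\to x$, and more generally for any diverging sequence in $G$ converging to $x$. Write $g_m = k_m a_m l_m$ with $a_m = \exp(t_m H)$ and $H=\mathrm{diag}(1,0,\dots,0,-1)$. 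Then $g_m\to x$ means $k_m[e_1]\to x$, with $t_m\to\infty$. The eigenvalues of $\mathrm{ad}(H)$ on $\mathrm{Herm}^0_\mathbb F(V)$ are $2,1,0,-1,-2$, and the top eigenvalue $2$ has eigenspace $\mathbb R x_0$, which is one-dimensional because $X_{13}\in\mathbb R$. Choosing $K$ to be the stabilizer of a suitable point, compatible with a Euclidean structure making $\tau(K)$ orthogonal (for example the form $\mathrm{tr}_{\mathbb R}(XY)$ with respect to $h_n$), this shows that $\tau(g_m)$ has a simple top singular direction $\tau(k_m)\mathbb R x_0$. The gap grows like $e^{t_m}$, which also confirms that $\tau|_\Gamma$ is projective Anosov. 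Hence $U_1(\tau(g_m)) = \tau(k_m)\mathbb Rx_0 = \xi^1(k_mP)\to\xi^1(x)$ by continuity.

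\textbf{Main obstacle.} The main obstacle is this last compatibility. $K$ must be taken orthogonal for an inner product on $\mathrm{Herm}^0_\mathbb F(V)$ for which $\mathfrak a$ acts symmetrically. This requires conjugating between the $h_{n-1,1}$ and $\tilde h_{n-1,1}$ models, since $H$ as written is not in the Cartan subspace of the standard maximal compact. I would handle this by working with the Cartan involution adapted to $h_{n-1,1}$, namely $X\mapsto -J X^* J$ with $J$ the antidiagonal swap, and checking that $x_0$ is still the top weight vector there.
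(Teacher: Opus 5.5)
Your items 1 and 2 follow the paper's route. You descend the orbit map $g\mapsto \mathbb R\,gx_0g^{-1}$ to $G/P$ and compute the tangent space as the image of $X\mapsto[X,x_0]$, transported by equivariance. Your check of $P$-invariance, via $x_0v=e_1\,h_{n-1,1}(e_1,v)$ and hence $gx_0g^{-1}=|\lambda|^2x_0$, is more complete than the paper's. The paper only observes that $\tau_*(\mathfrak p)$ preserves $\mathbb R x_0$, which covers only the identity component of $P$. This matters: for $\mathbb F=\mathbb R$, the element $\diag(-1,1,\dots,1,-1)$ lies in $P$ but not in its identity component. Your formula also gives injectivity for free. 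The paper's proof says nothing about item 3. Your Cartan-decomposition argument is essentially the computation the paper does later in \cref{lem:cartan_projections_automorphism_group}: top weight $2L_1$, with one-dimensional weight space $\mathbb R x_0$. With the right Euclidean structure (see below) it is correct, and it also yields projective Anosovness.

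Your final ``main obstacle'' paragraph, however, is mistaken and should be dropped rather than carried out. There is no incompatibility between the $h_{n-1,1}$-model and a standard Euclidean structure. With $w=e_1-e_n$ one has $h_n=h_{n-1,1}+h_{n-1,1}(\cdot,w)\,h_{n-1,1}(w,\cdot)$. Hence $K=\mathrm{Stab}_G([w])=G\cap\mathrm U(n,\mathbb F)$, and its Cartan involution on $\mathfrak g$ is simply $\theta_n(X)=-X^*$. The element $H=\diag(1,0,\dots,0,-1)$ is real diagonal, so it already lies in $\mathfrak g\cap\mathfrak p_n$.

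Take the inner product $\mathrm{Re}\,\tr(XY^*)$ on $\mathrm{Herm}^0_\mathbb F(V)$; note that $\tr_\mathbb R(XY)$ itself is indefinite there. For this inner product $\tau(K)$ is orthogonal and $\ad(H)$ is symmetric. That is exactly what $U_1(\tau(k_ma_ml_m))=\tau(k_m)\mathbb R x_0$ requires, and it is the structure behind part (ii) of that lemma.

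The involution $X\mapsto -JX^*J$ you propose is not a Cartan involution of $\mathfrak g$. If $J$ is the Gram matrix of $h_{n-1,1}$, it is the involution whose fixed algebra is $\mathfrak g$ itself. If $J$ is the full antidiagonal permutation, the problem is the same. In either reading, $-JH^*J=H$, so it fixes the very element whose exponential you need to be symmetric. Working with it would not produce a singular value decomposition of $\tau(g_m)$.
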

\begin{proof}
    If we denote with $\mathfrak p$ the Lie algebra of the parabolic subgroup $P$, then we have that 
    \begin{align*}
        &\mathfrak p = \left\{X \in \mathfrak{sl}(n, \mathbb F) :  h_{n-1,1} X^* {h_{n-1,1}}^{-1} = - X, X \cdot [1: 0 : \cdots] = [1: 0 : \cdots]  \right\} =\\
        &\left\{
        \begin{pmatrix}
            X_{11} & X_{12} & X_{13}\\
            0 & X_{22} & -X_{12}^*\\
            0 & 0 & - \overline X_{11}
        \end{pmatrix} :
        \begin{array}{c}
            X_{11}, X_{13} \in \mathbb F, X_{12} \in \mathbb F^{1 \times (n-1)},
            X_{22} \in \mathbb F^{(n-1) \times (n-1)}\\
            \overline X_{13} = -X_{13}, X_{22}^* = -X_{22}, \tr_\mathbb F X_{22} = -2 \mathrm{Im} X_{11}
        \end{array}
        \right\}.
    \end{align*}
    To see this, consider for every $X \in \mathfrak p$, the path $p(t) = e^{tX}$ in $P$ which satisfies $p(0) = e$ and $p'(0) = X$.
    Since each $p(t)$ stabilizes $\mathbb F e_1$, there exists a map $\lambda: \mathbb R \to \mathbb F - \{0\}$ such that $p(t) e_1 = \lambda(t) e_1$ for every $t$.
    Then, $\lambda$ will be differentiable group homomorphism, so $\lambda(t) = e^{\alpha t}$ for some $\alpha \in \mathbb F^\times$, and in particular $X e_1 = \alpha e_1$.
    On the other hand, if $X e_1 = \alpha e_1$ for some $\alpha \in \mathbb F^\times$, then the one-parameter subgroup $e^{tX}$ preserves $[1:0:\cdots:0]$, so $X \in \mathfrak p$.
    
    Similarly, we see that $\tau(P)$ fixes the point $\mathbb R x_0$, since $\tau_*(\mathfrak p)$ fixes it.
    Thus the map $G \to \mathbb P(\mathrm{Herm}^0_\mathbb F(V))$ given by $g \mapsto \mathbb R g x_0 g^{-1}$ factors through the quotient $G/P$, giving us
    \begin{align*}
        \xi^1: G/P &\to \mathbb P(\mathrm{Herm}^0_\mathbb F(V))\\
        gP &\mapsto \mathbb R g x_0g^{-1}.
    \end{align*}
    
    It is immediate that the map $\xi^1$ is $G$-equivariant, so we obtain that the tangent space at the point $\xi^1(gP)$ is given by
    \[
    \d_{\xi^1(gP)} \xi^1(\partial_\infty \Gamma) = \pi( \tau(g)[\mathfrak g, x_0] ),
    \]
    where 
    \begin{align*}
    \pi: \mathrm{Herm_\mathbb F(V)} \to \mathrm{Herm}^0_\mathbb F(V)/\mathbb R \tau(g)x_0 \simeq T_{\xi^1(gP)} \mathbb P(\mathrm{Herm}^0_\mathbb F(V)).
    \end{align*}
\end{proof}
Given the above description of the limit map, we are ready to provide the counterexample to \cite[Lemma 6.8]{pozzetti_anosov_2023}.
Note that we do not include the case $\mathbb F = \mathbb R$ in the statement of the proposition, because then (as can be seen from \cref{eq:intersection_limit_spaces}), the intersection of the tangent spaces at two distinct points of the limit set will always be trivial.
\begin{proposition}\label{prop:pozzetti}
    Let $\mathbb F \in \{ \mathbb C, \mathbb H\}$, $G = \mathrm{Aut}(h_{n-1,1}) \cap \SL(n, \mathbb F)$, $\Gamma \leq G$ be a uniform lattice.
    Then the restriction $\tau: \Gamma \to \SO\left(\mathrm{Herm}^0_\mathbb F(V)\right)$ of the representation defined in \cref{def:representation_tau} is projective Anosov and strongly irreducible.
    
    Moreover, the map $\zeta: \partial_\infty \Gamma \simeq G/P \simeq \to \mathcal F_{1, p'}(\mathrm{Herm}_\mathbb F^0(V))$ given by
    \[
    \zeta(gP) = \left(\xi^1(gP), g [\mathfrak g, x_0] g^{-1} \right),
    \]
    with
    \[
    x_0 = \begin{pmatrix}
        0 & 0 & \cdots & 0 & 1\\
        0 & 0 & \cdots & 0 & 0\\
        \vdots & \vdots & \ddots & \vdots & \vdots\\
        0 & 0 & \cdots & 0 & 0\\
        0 & 0 & \cdots & 0 & 0
    \end{pmatrix} \in \mathrm{Herm}^0_\mathbb F(V),
    \]
    is a $\Gamma$-equivariant measurable section over the limit set $\xi^1(\partial_\infty \Gamma)$, and has the property that for every pair of points $gP, hP \in \partial_\infty \Gamma$, the intersection $\zeta^{p'}(gP) \cap \zeta^{p'}(hP)$ is not trivial.
    In particular, for every $(\rho(\Gamma), \phi)$-Patterson--Sullivan measure $\mu$ supported on $\zeta(\partial_\infty \Gamma)$, the representation $\rho$ is not $\mu$-irreducible.
\end{proposition}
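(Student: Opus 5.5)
Projective Anosovness and strong irreducibility are quick. Strong irreducibility is the content of \cref{prop:automorphism_group_action_on_traceless_hermitian_forms}. Since $\min\{p,q\}=1$ here, \cref{prop:adjoint_representation_convex_cocompact} shows that $\tau|_\Gamma$ is $\mathbf H^{p',q'-1}$-convex-cocompact, hence projective Anosov. Alternatively, one can check directly that the top two singular values of $\tau(g)$ are $e^{2a}$ and $e^{a}$, where $a=\alpha(\mu(g))$ is the root of the rank-one group; this gives a gap that grows linearly in $|\gamma|$. By \cref{prop:limit_map_adjoint_representation}, $\xi^1(gP)=\mathbb R\, g x_0 g^{-1}$ is the limit map, and $\zeta$ is smooth and $G$-equivariant, hence in particular $\Gamma$-equivariant and measurable. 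Its second component $g[\mathfrak g,x_0]g^{-1}$ contains $\xi^1(gP)$, because $x_0\in[\mathfrak g,x_0]$: take $X=\mathrm{diag}(1,0,\dots,0,-1)\in\mathfrak g$, for which $[X,x_0]=2x_0$. Its dimension is $p'=\dim_\mathbb R(\mathbb F)(n-1)$, which I will confirm by computing $[\mathfrak g,x_0]$ explicitly.

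For the intersection property, $G$ acts transitively on pairs of distinct points of $G/P$, so by equivariance it suffices to treat two cases: $x=y$, which is trivial, and $(gP,hP)=(P,wP)$, where $w$ swaps $e_1$ and $e_n$. Then $\zeta^{p'}(wP)=w[\mathfrak g,x_0]w^{-1}=[\mathfrak g,x_0^t]$, with $x_0^t=E_{n1}$. I will compute $[\mathfrak g,E_{1n}]$ using the block form of $\mathfrak g=\mathrm{Lie}(\mathrm{Aut}(h_{n-1,1}))$ in \cref{set:standard_hermitian_form}, and similarly $[\mathfrak g,E_{n1}]$. Writing $X=(X_{ij})$, one has $XE_{1n}-E_{1n}X$. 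The first term puts the first column of $X$ into column $n$, and the second puts row $n$ of $X$ into row $1$. Hence the image is spanned by elements supported on row $1$, column $n$, and the $(1,n)$ entry, with that entry equal to $X_{11}+\overline{X_{11}}$-type combinations plus imaginary corrections. The analogous computation for $E_{n1}$ produces matrices supported on row $n$ and column $1$.

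Both subspaces contain $[X,E_{1n}]$ (respectively $[X,E_{n1}]$) for $X$ in the compact part $\mathfrak m$, consisting of $X_{11}$ purely imaginary with $X_{22}$ correspondingly traceless and skew. For $X=\mathrm{diag}(u,*,\dots,-\bar u)$ with $u$ imaginary, we get $[X,E_{1n}]=(u+\bar u\cdot)$-twisted entries, namely $uE_{1n}-E_{1n}(-\bar u)$. This is where the non-commutativity and the non-real scalars matter. I expect the common element to be of the form $\mathrm{diag}(i,0,\dots,0,i)$-type Hermitian (relative to $h_{n-1,1}$) traceless matrices, more precisely $\mathrm{diag}(u,-\text{tr-correction},u)$-shaped elements that lie in both spaces. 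These come from the commutators $[X_{21}\text{-part},E_{1n}]$ and $[X_{12}\text{-part},E_{n1}]$, which produce matrices with entries in positions $(1,1)$, $(n,n)$ and the middle block. Concretely, I plan to exhibit an imaginary $u\in\mathbb F$ and a block $X_{22}$ such that the element $Y=\begin{pmatrix}u&0&0\\0&Z&0\\0&0&\bar u\end{pmatrix}$, self-adjoint for $h_{n-1,1}$, lies in both $[\mathfrak g,E_{1n}]+\mathbb R E_{1n}$ and $[\mathfrak g,E_{n1}]$. When $\mathbb F=\mathbb R$ no imaginary $u$ exists, which is consistent with the remark that the real case fails. \textbf{Main obstacle:} carrying out this explicit bracket bookkeeping correctly, especially for $\mathbb H$, and checking that the candidate lies in the traceless Hermitian space.

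Finally, for the measure statement, take any $y\in\partial_\infty\Gamma$ and consider the flag $(\xi^1(y)\oplus\text{complement},\xi^{d-1}(y))$, or more directly any flag of type $(d-p'-1,d-1)$ built from $\zeta^{p'}(y)$. Every $\zeta(x)$ meets $\zeta^{p'}(y)$ nontrivially, so after adjusting dimensions every point of $\zeta(\partial_\infty\Gamma)$ is non-transverse to a fixed flag. Hence $\mathrm{Ann}$ of that flag has full $\mu$-measure, and $\mu$ is not irreducible in the sense of \cref{def:mu_irreducible}.
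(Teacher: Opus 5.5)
Your reduction is the paper's own: double transitivity of the rank-one group $G$ on $G/P$, then comparing $[\mathfrak g,E_{1n}]$ with its conjugate $[\mathfrak g,E_{n1}]$. The Anosov, strong irreducibility and equivariance parts are fine. The gap is that the decisive computation, which is essentially the whole content of the proof, is only announced, and the mechanism you predict for it is wrong.

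\begin{itemize}
\item The compact part $\mathfrak m$ contributes nothing. For $X=\mathrm{diag}(u,Z,-\bar u)$ with $u$ imaginary, $-\bar u=u$, so $[X,E_{1n}]=(u+\bar u)E_{1n}=0$.
\item The $X_{21}$-part only produces the off-diagonal entries in row $1$ and column $n$.
\item No element of $[\mathfrak g,E_{1n}]$ has a nonzero middle block, so there is no $Z$ to choose.
\item $\mathrm{diag}(i,0,\dots,0,i)$ is not $h_{n-1,1}$-self-adjoint, since the $(n,n)$ entry must be $\bar u=-u$.
\end{itemize}

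Doing the bookkeeping for $X\in\mathfrak g$ in the block form of the Setting gives
\[
[X,E_{1n}]=\begin{pmatrix}-X_{31}&X_{21}^*&X_{11}+\overline{X_{11}}\\0&0&X_{21}\\0&0&X_{31}\end{pmatrix},\qquad \overline{X_{31}}=-X_{31},
\]
so
\[
\zeta^{p'}(P)=\left\{\begin{pmatrix}a&v^*&b\\0&0&v\\0&0&-a\end{pmatrix}: \bar a=-a,\ v\in\mathbb F^{(n-2)\times 1},\ b\in\mathbb R\right\}.
\]
This space has dimension $\dim_{\mathbb R}\mathbb F\,(n-1)=p'$. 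Conjugating by $w$ gives the mirrored pattern, supported on column $1$ and row $n$. The intersection is therefore $\{\mathrm{diag}(a,0,\dots,0,-a):\bar a=-a\}$.

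The missing idea is where this common element comes from: the purely imaginary root-space entries $X_{31}$ and $X_{13}$. Indeed $[X_{31}E_{n1},E_{1n}]=X_{31}(E_{nn}-E_{11})$ and $[X_{13}E_{1n},E_{n1}]=X_{13}(E_{11}-E_{nn})$. These root spaces vanish exactly when $\mathbb F=\mathbb R$, which is why the real case fails.

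For the final claim, the flags dual to $\mathcal F_{1,p'}$ have type $(d-p',d-1)$, not $(d-p'-1,d-1)$. Your ``adjusting dimensions'' amounts to choosing $W\supseteq\zeta^{p'}(y)$ with $\dim W=d-p'$. Then every $\zeta(x)$ lies in $\mathrm{Ann}(W,H)$ for any hyperplane $H\supseteq W$.

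This requires $q'=d-p'\ge p'$. That holds for $\mathbb F=\mathbb C$ with $n\ge 3$ and for $\mathbb F=\mathbb H$ with $n\ge 4$, but not in general. For $\mathbb F=\mathbb C$, $n=2$, the representation $\tau$ is $\mathrm{SU}(1,1)\to\SO(2,1)$ and $\zeta(x)=(x,x^\perp)$. A flag $(W,H)$ fails to be transverse to $\zeta(x)$ only when $x\subset H$ or $x\subset W^\perp$, which happens for at most four isotropic lines. So the Lebesgue-class Patterson--Sullivan measure is irreducible there, even though pairwise intersections of $2$-planes in $\mathbb R^3$ are nontrivial for dimension reasons alone.

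The paper's proof does not address this deduction either. You should either add the dimension condition or argue the irreducibility statement separately.
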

\begin{proof}
    We consider the opposite minimal parabolic subgroups
    \[
    P = \mathrm{St}_G([1:0:\cdots:0]), P^t = \mathrm{St}_G([0: \cdots: 1]),
    \]
    and note that $P^t = g_0 P g_0^{-1}$, where
    \[
    g_0 = \begin{pmatrix}
        0 & \cdots & 0 & 1\\
        0 & \cdots & 1 & 0\\
        \vdots & \ddots & \vdots & \vdots\\
        0 & 1 & 0 & 0\\
        1 & 0 & 0 & 0
    \end{pmatrix} \in G.
    \]
    Since $G$ is of rank $1$, it acts transitively on pairs of distinct points in $G/P$, so for every pair $g \cdot P, h \cdot P \in G/P$ we can choose $g_1 \in G$ such that $g_1 g \cdot P = P$ and $g_1 h \cdot P = P^t$.
    By equivariance, we have that
    \begin{align*}
        \zeta^{p'}(gP) \cap \zeta^{p'}(hP) = \tau(g_1) \left( \zeta^{p'}(P) \cap \zeta^{p'}(P^t) \right),
    \end{align*}
    and thus $\zeta^{p'}(gP) \cap \zeta^{p'}(hP) \neq 0$ if and only if $\zeta^{p'}(P) \cap \zeta^{p'}(P^t) \neq 0$.
    We calculate
    \[
    \zeta^{p'}(P) = \tau_*(\mathfrak g) x_0 = \left\{ \mathbb R \begin{pmatrix}
        a & X_{12} & 2 b \\
        0 & 0 & X_{12}^*\\ 
        0 & 0 & -a
    \end{pmatrix}: a \in \mathbb F, b \in \mathbb R, X_{12} \in \mathbb F^{1 \times (n-1)}, \overline a = -a \right\}
    \]
    and
    \begin{align*}
        \zeta^{p'}(P^t) &= \tau(g_0) \zeta^{p'}(P) = g_0 \zeta^{p'}(P) g_0^{-1} \\
        &= \left\{ \mathbb R \begin{pmatrix}
        a & 0 & 0 \\
        X_{12}^* & 0 & 0\\ 
        2b & X_{12} & -a
    \end{pmatrix}: a \in \mathbb F, b \in \mathbb R, X_{12} \in \mathbb F^{1 \times (n-1)}, \overline a = -a \right\}
    \end{align*}
    giving that the intersection
    \begin{equation}
        \zeta^{p'}(P) \cap \zeta^{p'}(P^t) = \left\{ \mathbb R \begin{pmatrix}
        a & 0 & 0 \\
        0 & 0 & 0\\ 
        0 & 0 & -a
    \end{pmatrix}: a \in \mathbb F, \overline a = -a \right\}\label{eq:intersection_limit_spaces}
    \end{equation}
    is nontrivial.
\end{proof}
\subsection{Falconer and Hausdorff dimensions still coincide}\label{sec:critical_exponent}
As mentioned in \cref{sec:proof_strategy}, the example of the automorphism group action presented in the previous section does provide a counterexample to \cite[Lemma 6.8]{pozzetti_anosov_2023} and \cite[Proposition 10.3]{labourie_anosov_2006}, but it does not provide a counterexample to \cite[Theorem A]{pozzetti_anosov_2023}.
In other words, the Falconer exponent of the representation $\tau$ still coincides with the dimension of the limit set, and the objective of this section is to show this by direct calculation in \cref{sec:critical_exponent_calculation}.
The necessary facts about hyperbolic spaces and their volume entropy are recalled in \cref{sec:hyperbolic_spaces}.
\subsubsection{Hyperbolic spaces and their volume entropy}\label{sec:hyperbolic_spaces}
The hyperbolic space over the field $\mathbb F \in \{\mathbb R, \mathbb C, \mathbb H\}$ is defined as the set negative lines with respect to a Hermitian form of signature $(n,1)$ on $\mathbb F^{n+1}$:
\[
\mathbf H^n_\mathbb F = \left\{ z \in  \mathbb P_\mathbb F(\mathbb F^{n+1}) : \langle z, z \rangle < 0 \right\},
\]
where we use the notation $\mathbb P_\mathbb F(V)$ to denote the projective space of a vector space $V$ over a field $\mathbb F$, i.e.\ $\mathbb P_\mathbb F(V) = V/\mathbb F^\times$
For this section, we keep using the following form: 
\[
\langle z, w \rangle = h_{n,1}(z,w) = \bar z_1 w_{n+1} + \bar z_2 w_2 + \cdots + \bar z_n w_n + \bar z_{n+1} w_{1}, \text{ for } z, w \in \mathbb F^{n+1},
\]
because for it, the Cartan subalgebra $\mathfrak a$ of $G = \mathrm{Aut}(h_{n-1,1}) \cap \SL(n+1, \mathbb F)$ is given by the diagonal matrices of the form
\[
\mathfrak a = \left\{ \diag(a, 0, \cdots, 0, -a) : a\in \mathbb R \right\},
\]
which makes calculations easier.
We normalize the metric on $\mathbf H^n_\mathbb F$ so that the sectional curvature is contained in the interval $[-4, -1]$, giving us (see for instance \cite[Section 2.2]{parker2003notes}, \cite[Section 1]{kim2003geometry}) the following formula for the distance $d$ between two points $x, y \in \mathbf H^n_\mathbb F$:
\begin{align}\label{eq:distance_hyperbolic_space}
\cosh^2 d(x,y) &= \frac{\langle x, y \rangle \langle y, x \rangle}{\langle x, x \rangle \langle y, y \rangle}. 
\end{align}
In calculating the critical exponent of the unstable Jacobian in \cref{prop:critical_exponent_calculation}, we will use the critical exponent of the group $\Gamma$ acting on $\mathbf H^n_\mathbb F$, which coincides with the volume entropy of the space, since $\Gamma$ is a lattice acting cocompactly on a closed non-positively curved manifold.
Thankfully, the volume entropies for the rank-1 symmetric spaces of non-compact type are well-known (see e.g.\ \cite[Section 2]{besson1995entropies}):
\begin{proposition}
    Let $\mathbf H^n_\mathbb F$ denote the hyperbolic space over $\mathbb F = \mathbb R, \mathbb C, \mathbb H$ of $(\mathbb F-)$dimension $n$, with the metric normalized so that its sectional curvature is contained in the interval $[-4, -1]$.
    Then the volume entropy of $\mathbf H^n_\mathbb F$ is given by
    \[
    h(\mathbf H^n_\mathbb F) = \dim_\mathbb R(\mathbb F) n + \dim_\mathbb R(\mathbb F) - 2 = 
    \begin{cases}
        n-1 & \text{if } \mathbb F = \mathbb R,\\
        2n & \text{if } \mathbb F = \mathbb C,\\
        4n + 2 & \text{if } \mathbb F = \mathbb H.
    \end{cases}
    \]
\end{proposition}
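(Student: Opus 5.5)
We compute the volume entropy $h(\mathbf H^n_\mathbb F)=\lim_{r\to\infty}\frac1r\log \mathrm{vol}\, B(x,r)$ directly from the growth of the Riemannian volume in geodesic polar coordinates. Since $\mathbf H^n_\mathbb F = G/K$ is a rank-one symmetric space of non-compact type, it is a Hadamard manifold. So $\exp_x$ is a diffeomorphism, and the volume density along a unit-speed geodesic $c$ from $x$ is the determinant of the Jacobi fields vanishing at $x$. In a symmetric space the curvature tensor is parallel. Hence the Jacobi operator $R_{c'}=R(\cdot,c')c'$ has constant eigenvalues along $c$, and each Jacobi field splits along the eigenspaces.

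The key step is to determine the eigenvalues and their multiplicities. Write $k=\dim_\mathbb R\mathbb F$. I expect the spectrum of $R_{c'}$ on $c'^\perp$ to be $-1$ with multiplicity $k(n-1)$ and $-4$ with multiplicity $k-1$.

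To see this concretely I would use the root space decomposition of $\mathfrak g=\mathrm{Lie}(G)$ with respect to $\mathfrak a=\{\diag(a,0,\dots,0,-a)\}$, in the block form of \cref{set:standard_hermitian_form}.

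\begin{enumerate}
\item The entries $X_{12},X_{21}$ give the root spaces $\mathfrak g_{\pm\alpha}$, where $\alpha(\diag(a,0,\dots,0,-a))=a$. Each has real dimension $k(n-1)$.
\item The imaginary entries $X_{13},X_{31}$ give $\mathfrak g_{\pm2\alpha}$. These have dimension $k-1$, so they are absent when $\mathbb F=\mathbb R$.
\end{enumerate}

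For a unit vector $H\in\mathfrak a$ one has $R_H=-\ad(H)^2$ on $\mathfrak p$. Its eigenvalues on the projections of $\mathfrak g_\alpha$ and $\mathfrak g_{2\alpha}$ to $\mathfrak p$ are therefore $-\alpha(H)^2$ and $-4\alpha(H)^2$. The normalization that the sectional curvature lies in $[-4,-1]$ forces $\alpha(H)=1$. I would cross-check this against the distance formula \cref{eq:distance_hyperbolic_space} by computing $d(\exp(tH)x_0,x_0)=t$ for the basepoint $x_0$ and $H$ of unit length.

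The Jacobi fields vanishing at $x$ in the two eigenspaces are $\sinh(r)E(r)$ and $\tfrac12\sinh(2r)E(r)$, for parallel fields $E$. This gives
\[
\mathrm{vol}\, B(x,r)=C\int_0^r \sinh^{k(n-1)}(t)\,\sinh^{k-1}(2t)\,\d t ,
\]
and therefore
\[
h=k(n-1)+2(k-1)=kn+k-2 .
\]
Evaluating for $k=1,2,4$ gives $n-1$, $2n$ and $4n+2$ respectively.

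The main obstacle I expect is the bookkeeping of the normalization. The Killing-form metric and the metric fixed by \cref{eq:distance_hyperbolic_space} differ by a scalar, and one must check that the chosen scaling indeed makes the curvature range exactly $[-4,-1]$, i.e. $\alpha(H)=1$. For $\mathbb F=\mathbb R$ the curvature is constant, equal to $-1$, and the formula reduces to the classical $n-1$. Equivalently, $h=2\rho(H)$ with $\rho$ the half-sum of positive roots counted with multiplicity, which serves as a consistency check.
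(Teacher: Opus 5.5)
Your argument is correct. Note that the paper does not prove this proposition at all: it quotes the formula from \cite[Section 2]{besson1995entropies}.

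Your computation gives a self-contained derivation of that cited formula. You use parallel curvature and $R_H=-\ad(H)^2$ on $\mathfrak p$. Its eigenvalues are $-\alpha(H)^2$ on the $k(n-1)$-dimensional part coming from $\mathfrak g_\alpha$ and $-4\alpha(H)^2$ on the $(k-1)$-dimensional part coming from $\mathfrak g_{2\alpha}$. This gives the volume density $\sinh^{k(n-1)}(t)\sinh^{k-1}(2t)$ and hence $h=k(n-1)+2(k-1)$. It also forces you to confront the normalization, which the citation leaves implicit.

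On that point, your proposed cross-check against \cref{eq:distance_hyperbolic_space} is the right way to fix the scale, better than the curvature condition. For $k\geq 2$, requiring curvatures in $[-4,-1]$ does force $\alpha(H)=1$. For $\mathbb F=\mathbb R$, however, it only gives $\alpha(H)\in[1,2]$, so "forces $\alpha(H)=1$" is not literally true there. The distance formula settles all three cases at once. In the proof of \cref{lem:cartan_projections_automorphism_group} one gets $d(e^{tH_0}o,o)=t$. Since $H_0$ is anti-invariant under the geodesic symmetry at $o$, the curve $e^{tH_0}o$ is a unit-speed geodesic, so $\alpha(H_0)=1$.

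This is also exactly the normalization the paper relies on later. \cref{prop:critical_exponent_calculation} compares $J^u$ evaluated on $\mu'(\tau(\gamma))=d(\gamma\cdot o,o)\,\tau_*(H_0)$ with $\delta_\Gamma=h$. Your value $h=k(n-1)+2(k-1)$ coincides with $J^u_{kn-1}(\tau_*(H_0))$ computed there, which is a nice consistency check with your remark that $h=2\rho(H)$.
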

\subsubsection{Calculation of the critical exponent}\label{sec:critical_exponent_calculation}
To go from the critical exponent of $\Gamma$ acting on $\mathbf H^n_\mathbb F$ to the critical exponent of the unstable Jacobian, we will need explicit expressions for the Cartan projections of $G = \mathrm{Aut}(h_{n,1}) \cap \SL(n+1, \mathbb F)$ and $\SO(\mathrm{Herm}_\mathbb F^0(V))$, in terms of the distance in $\mathbf H^{n}_\mathbb F$.
\begin{lemma}[Expressions for Cartan projections]\label{lem:cartan_projections_automorphism_group}
    Let $\tau: G \to \SO(\mathrm{Herm}_\mathbb F^0(V))$ be the representation defined in \cref{def:representation_tau}.
    We fix $o = [1:0:\cdots:-1]$ in $\mathbf H^{n}_\mathbb F$ and denote its stabilizer in $G$ by $K = \mathrm{Stab}_{G}(o)$, which is a maximal compact subgroup.
    Then:
    \begin{enumerate}[label=(\roman*)]
        \item For $\gamma \in G$, the Cartan projection $\mu: G \to \mathfrak a^+$ with respect to the decomposition $G = K \exp(\mathfrak a^+) K$ is given by
        \[
        \mu(\gamma) = d(\gamma \cdot o, o) H_0 \in \mathfrak a^+,
        \]
        where $\mathfrak a^+ = \mathbb R H_0$ is the Weyl chamber of $\mathfrak g$, and $H_0 = \diag(1, 0, \cdots, 0, -1)$.
        \label{item:cartan_projection_automorphism_group}
        \item There exists a maximal compact subgroup $K' \leq \SO(\mathrm{Herm}_\mathbb F^0(V))$ containing $\tau(K)$.
        \item The value $\tau_*(H_0)$ of the differential $\tau_*: \mathfrak g \to \mathfrak{so}((\mathrm{Herm}_\mathbb F^0(V)))$ of $\tau$ at the identity, is given by
        \begin{align*}
        \tau_*(H_0) = \diag(2, 1, \cdots, 1, 0, \cdots, 0, -1, \cdots, -1, -2) 
        \begin{array}{c}
            \text{ modulo the action} \\
            \text{ of the Weyl group}
        \end{array}
        \end{align*}

        where the entry $1$ appears $(\dim_\mathbb R F)(n-1)$ times, $0$ appears $\dim_\mathbb R \mathbb F + \dim_\mathbb R \mathfrak u(n-1, \mathbb F) - 1$ times, and $-1$ appears $(\dim_\mathbb R F)(n-1)$ times.
    \item For every $\gamma \in G$, we have that
    \begin{align*}
        \mu'(\tau(\gamma)) &= d(\gamma \cdot o, o) \diag(2, 1, \cdots, 1, 0, \cdots, 0, -1, \cdots, -1, -2)
    \end{align*}
    where $\mu': \SO(\mathrm{Herm}_\mathbb F^0(V)) \to \mathfrak a'^+$ is the Cartan projection of $\SO(\mathrm{Herm}_\mathbb F^0(V))$ with respect to the decomposition $\SO(\mathrm{Herm}_\mathbb F^0(V)) = K' \exp(\mathfrak a'^+) K'$, and 
    \begin{align*}
        \mathfrak a'^+ &= \left\{ \diag(a_1, \cdots, a_r, 0, \cdots, 0, -a_1, \cdots, -a_r) : a_1 \geq \cdots \geq a_r \geq 0 \right\}
    \end{align*}
    is the Weyl chamber of $\SO(\mathrm{Herm}_\mathbb F^0(V))$.
    \end{enumerate} 

\end{lemma}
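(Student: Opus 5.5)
We will prove the four items in order, working throughout in the coordinates of \cref{set:standard_hermitian_form} (with the form $h_{n,1}$ and the Cartan subalgebra $\mathfrak a = \mathbb R H_0$).

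For \ref{item:cartan_projection_automorphism_group}, we first check that $K = \mathrm{Stab}_G(o)$ is maximal compact, since $G$ acts transitively on $\mathbf H^n_\mathbb F$ with compact point stabilizers. We then use the Cartan decomposition $G = K \exp(\mathfrak a^+) K$ to write $\gamma = k_1 \exp(t H_0) k_2$ with $t \geq 0$. Because $K$ fixes $o$, we have $d(\gamma o, o) = d(\exp(tH_0) o, o)$. Now $\exp(tH_0) o = [e^t : 0 : \cdots : -e^{-t}]$, and \cref{eq:distance_hyperbolic_space} gives
\[
\cosh^2 d = \frac{|\langle \exp(tH_0)o, o\rangle|^2}{\langle o,o\rangle^2} = \frac{(e^t+e^{-t})^2}{4} = \cosh^2 t,
\]
so $d(\gamma o, o) = t$ and $\mu(\gamma) = t H_0$.

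For (ii), $\tau(K)$ is a compact subgroup of $\SO(\mathrm{Herm}^0_\mathbb F(V))$, so it lies in some maximal compact subgroup $K'$ by the standard fact (Cartan's fixed point theorem on the symmetric space). To make (iv) work, we want $K'$ compatible with $\tau_*(\mathfrak a)$. Concretely, we take $K'$ to be the stabilizer of a Cartan involution of $\mathfrak{so}(\mathrm{Herm}^0_\mathbb F(V))$ that extends the Cartan involution of $\mathfrak g$ fixing $\mathfrak k$. This is possible because $\tau_*$ is compatible with $X \mapsto -X^*$: we choose the positive definite inner product $\mathrm{tr}_\mathbb R(A B^*)$, adjusted to $o$, which is $\tau(K)$-invariant and for which $\tau_*(\mathfrak p)$ is symmetric. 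Then $\tau_*(\mathfrak a)$ consists of symmetric operators, so it sits inside a Cartan subspace $\mathfrak a'$ adapted to $K'$.

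For (iii), we compute the eigenvalues of $\mathrm{ad}(H_0)$ on $\mathrm{Herm}^0_\mathbb F(V)$ using the block form \cref{eq:tilde_herm_n_1}. Since $[H_0, E_{ij}] = (h_i - h_j)E_{ij}$ with $h = (1, 0, \ldots, 0, -1)$, the entries behave as follows:
\begin{itemize}
\item $X_{13}$ (one real dimension) has eigenvalue $2$, and $X_{31}$ has eigenvalue $-2$;
\item $X_{12}$ (and $X_{12}^*$, which is tied to it) gives $(\dim_\mathbb R \mathbb F)(n-1)$ real dimensions with eigenvalue $1$;
\item $X_{21}$ gives the same number with eigenvalue $-1$;
\item $X_{11}$ and $X_{22}$, under the trace condition, give eigenvalue $0$.
\end{itemize}
We count the zero eigenspace so that it matches the stated multiplicity, which is a direct dimension count. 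We then order the eigenvalues to land in $\mathfrak a'^+$ via the Weyl group.

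For (iv), $\tau$ maps $K \exp(tH_0) K$ into $K' \exp(t\,\tau_*(H_0)) K'$. After conjugating by a Weyl group element into $\mathfrak a'^+$, uniqueness of the Cartan projection gives
\[
\mu'(\tau(\gamma)) = t\,\tau_*(H_0) = d(\gamma o, o)\, \diag(2, 1, \ldots, 1, 0, \ldots, 0, -1, \ldots, -1, -2),
\]
using (i).

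The main obstacle is (ii): making sure that $K'$ is chosen so that $\tau_*(\mathfrak a)$ lies in a Cartan subspace associated to $K'$. This requires a Cartan involution of $\SO(\mathrm{Herm}^0_\mathbb F(V))$ that restricts to that of $G$. We would first try to verify directly that the inner product $(A,B) \mapsto \mathrm{tr}_\mathbb R(A\,\theta(B))$, with $\theta$ the Cartan involution of $\mathfrak{sl}(n+1,\mathbb F)$ defining $K$, is positive definite on $\mathrm{Herm}^0_\mathbb F(V)$, makes $\tau(K)$ orthogonal, and makes $\tau_*(H_0)$ self-adjoint.
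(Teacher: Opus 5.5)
Your argument is essentially the paper's: the same $\cosh^2$ computation for (i), the same weight-space table for (iii), and the same $K\exp(\mathfrak a^+)K$ argument for (iv). For (ii) you also get the same $K'$. Your form $\mathrm{tr}_{\mathbb R}(AB^*)$ equals $\langle\cdot,\cdot\rangle$ on $\mathrm{Herm}^0_{\mathbb F}(V)\cap\mathfrak h^{\theta_n}$ and $-\langle\cdot,\cdot\rangle$ on $\mathrm{Herm}^0_{\mathbb F}(V)\cap\mathfrak p_n$, so its isometry group inside $\SO(\mathrm{Herm}^0_{\mathbb F}(V))$ is exactly the paper's $\mathrm S(\mathrm O\times\mathrm O)$.

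At the point you flag you are more careful than the paper. Step (iv) needs $\tau_*(H_0)$ to be symmetric for this form, and the paper uses this silently. It holds because $H_0$ is real diagonal, so $\mathrm{tr}_{\mathbb R}([H_0,A]B^*)=\mathrm{tr}_{\mathbb R}(A[H_0,B]^*)$ by cyclicity of $\mathrm{tr}_{\mathbb R}$.

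Two small corrections to (ii):
\begin{itemize}
\item No ``adjustment to $o$'' is needed. For $o=[1:0:\cdots:-1]$ one checks that $K=G\cap\mathrm U(n+1,\mathbb F)$, which is also what makes $K$ maximal compact; compactness of point stabilizers alone would not give maximality.
\item In your last paragraph, $\mathrm{tr}_{\mathbb R}(A\,\theta(B))$ with $\theta(B)=-B^*$ is negative definite. The form you want is its negative, which is the one you wrote earlier.
\end{itemize}

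One assertion would fail as written: your ``direct dimension count'' of the zero weight space does not match the stated multiplicity except when $\mathbb F=\mathbb C$. The block $X_{22}$ ranges over Hermitian, not skew-Hermitian, $(n-1)\times(n-1)$ matrices. The count is therefore $\dim_{\mathbb R}\mathbb F+\dim_{\mathbb R}\{\text{Hermitian }(n-1)\times(n-1)\text{ matrices over }\mathbb F\}-1$. This exceeds the stated $\dim_{\mathbb R}\mathbb F+\dim_{\mathbb R}\mathfrak u(n-1,\mathbb F)-1$ by $n-1$ for $\mathbb F=\mathbb R$ and falls short of it by $2(n-1)$ for $\mathbb F=\mathbb H$. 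With the corrected value, the multiplicities add up to $\dim_{\mathbb R}\mathrm{Herm}^0_{\mathbb F}(\mathbb F^{n+1})$. The paper's table contains the same slip, so this is a defect of the statement rather than of your method. It is harmless for \cref{prop:critical_exponent_calculation}, where only the entries $2$ and $1$ of $\tau_*(H_0)$ enter $J^u$. Still, you should record the correct multiplicity rather than assert agreement.
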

\begin{proof}
    \begin{enumerate}[label=(\roman*)]
    \item Since $\mathfrak a^+ = \mathbb R_+ H_0$ for $H_0 = \diag(1, 0, \cdots, 0, -1)$, we can write $\mu(\gamma) = r(\gamma) H_0$ for some $r(\gamma) \in \mathbb R_+$, which we will show to be equal to the displacement $d(o, \gamma \cdot o)$ of $\gamma$ at the point $o$.
    Indeed, writing $\gamma = k e^{\mu(\gamma)} l$ for $k, l \in K$, we have that
    \begin{align*}
        d(\gamma \cdot & o, o) = d(k e^{\mu(\gamma)} l \cdot o, o)\\
        &= d(e^{\mu(\gamma)} \cdot [1:0:\cdots:-1], [1:0:\cdots:-1])\\
        &= d(\diag(e^{r(\gamma)}, \cdots, e^{-r(\gamma)})[1:0:\cdots:-1], [1:0:\cdots:-1])\\
        &= d([e^{r(\gamma)}:0:\cdots:0:-e^{-r(\gamma)}], [1:0:\cdots:-1])
    \end{align*}
    However, substituting into \cref{eq:distance_hyperbolic_space}, we have that
    \[
    \cosh^2\left(d([e^{r(\gamma)}:0:\cdots:0:-e^{-r(\gamma)}], o)\right) = \frac{\left(-e^{r(\gamma)} - e^{-r(\gamma)}\right)^2}{4} = \cosh^2(r(\gamma)),
    \]
    from which it follows that $r(\gamma) = d(\gamma \cdot o, o)$.

    \item We saw in \cref{sec:adjoint_representation_perspective} that we can use the involution $\theta_n(X) = -X^{*}$ to decompose $\mathrm{Herm}_\mathbb F^0(V)$ into
    \[
    \mathrm{Herm}_\mathbb F^0(V) = \left( \mathrm{Herm}_\mathbb F^0(V) \cap \mathfrak h^{\theta_n} \right) \oplus \left(\mathrm{Herm}_\mathbb F^0(V) \cap \mathfrak p_n\right),
    \]
    where $\mathfrak p_n$ are the anti-fixed points of $\theta_n$ and $\mathfrak h = \mathfrak{sl}(n,\mathbb F)$.
    Moreover, when endowed with the inner product $\langle X, Y \rangle = - \mathrm{Re}(\tr(XY))$, we can calculate that the decomposition is orthogonal, 
    its first summand is positive definite, and the second one is negative definite.
    Moreover, each summand is invariant
     under the action of $\rho(K)$, so 
    \[
    \rho(K) \subseteq K' \equaldef \mathrm S \left(\mathrm{O}(\mathrm{Herm}_\mathbb F^0(V) \cap \mathfrak g^{\theta_n}) \times \mathrm{O}(\mathrm{Herm}_\mathbb F^0(V) \cap \mathfrak p_n)\right).
    \]
    \item
    
    Since $G$ acts on $\mathrm{Herm}_\mathbb F^0(V)$ by conjugation, its differential at the identity acts by the commutator, i.e.\ $\tau_*(X) Y = [X, Y]$ for $X \in \mathfrak g$ and $Y \in \mathrm{Herm}_\mathbb F^0(V)$.
    We can thus calculate that for $X \in \mathrm{Herm}_\mathbb F^0(V)$,
    \[
    \tau_*(H)(X) = [H, X] = \begin{pmatrix}
        0 & a X_{12} & 2 a X_{13}\\
        -a X_{21} & 0 & a X_{12}^*\\
        -2 a X_{31} & -a X_{21}^* & 0
    \end{pmatrix}, 
    \begin{array}{c}
        \text{ for } H = \diag(a, 0, \cdots, 0, -a)\\
        \text{ and } X \text{ as in } \cref{eq:tilde_herm_n_1}
    \end{array}
    \]
    Thus the weights of the representation $\rho$ with respect to the Cartan subalgebra $\mathfrak a$ of diagonal matrices are given in \cref{tab:weightspaces_automorphism_group}, where we use the block notation from the identification stated in \cref{eq:tilde_herm_n_1}, and denote with $L_1 \in \mathfrak a^*$ the projection of the first diagonal entry.
    From this, we have that
    \[
    \mu'(\tau_*(H_0)) = \diag(2, 1, \cdots, 1, 0, \cdots, 0, -1, \cdots, -1, -2) \in \mathfrak a'^+
    \]
    with each entry appearing as many times as its corresponding weightspace dimension.
    \begin{table}[ht]
        \centering
        \caption{Weightspaces and their dimensions}
        \label{tab:weightspaces_automorphism_group}
        \begin{tabular}{|c|c|c|}
            \hline
            Weight & Weightspace corresponding to & dimension \\ \hline
            $2L_1$ & $X_{13}$ & $1$ \\ \hline
            $-2L_1$ & $X_{31}$ & $1$ \\ \hline
            $L_1$ & $X_{12}$ & $\dim_\mathbb R F (n-1)$ \\ \hline
            $-L_1$ & $X_{21}$ & $\dim_\mathbb R F (n-1)$ \\ \hline
            $0$ & $X_{11}, X_{22}$ & $\dim_\mathbb R \mathbb F + \dim_\mathbb R \mathfrak u(n-1, \mathbb F) - 1$ \\ \hline
        \end{tabular}
    \end{table}
    \item
    To get the expression for any element $\tau(\gamma)$, we consider the Cartan decomposition of $\gamma = k e^{\mu(\gamma)} l$ for $k, l \in K$.
    Applying $\tau$ we obtain $\tau(\gamma) = \tau(k) e^{\tau*(\mu(\gamma))} \tau(l)$ and the result then follows by part \ref{item:cartan_projection_automorphism_group}.
    \end{enumerate}
    
\end{proof}

We are now ready to show that $\tau$ does not provide a counterexample to the equality between the critical exponent and the Hausdorff dimension.
\begin{proposition}[Equality of Falconer and Hausdorff dimensions]\label{prop:critical_exponent_calculation}
    Let $\Gamma \leq G$ be a uniform lattice, and $\tau: \Gamma \to \SO(\mathrm{Herm}^0_\mathbb F(V))$ be the representation defined in \cref{def:representation_tau}.
    Then the critical exponent of the Falconer functional is the same as the 
    dimension of the limit set~$\xi^1(\partial_\infty \Gamma)$:
    \[
    \delta_F(\tau) = \dim (\xi^1(\partial_\infty \Gamma)) = (\dim_\mathbb R \mathbb F) n-1.
    \]
\end{proposition}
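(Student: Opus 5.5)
The plan is to compute both sides of the claimed equality explicitly. Throughout I use the convention of \cref{sec:hyperbolic_spaces}: $G=\mathrm{Aut}(h_{n,1})\cap\SL(n+1,\mathbb F)$ acts on $\mathbf H^n_\mathbb F$, $V=\mathbb F^{n+1}$, and $d=\dim_\mathbb R\mathbb F$.

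\emph{Dimension of the limit set.} By \cref{prop:limit_map_adjoint_representation}, $\xi^1$ is the limit map of $\tau|_\Gamma$ under the identification $\partial_\infty\Gamma\simeq G/P$. It is smooth, $G$-equivariant and injective, being the restriction of the projective limit map of an Anosov representation. Its differential at $P$ is injective: this follows from the explicit description of $\tau_*(\mathfrak g)x_0$ in the proof of \cref{prop:pozzetti}, since the stabilizer of $\mathbb Rx_0$ in $\mathfrak g$ is exactly the parabolic subalgebra. By equivariance the differential is then injective everywhere. Hence $\xi^1(\partial_\infty\Gamma)$ is a smoothly embedded copy of $G/P\simeq\partial_\infty\mathbf H^n_\mathbb F$, which is a sphere of real dimension $dn-1$. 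Its Hausdorff dimension with respect to the smooth structure of $\mathbb P(\mathrm{Herm}^0_\mathbb F(V))$ is therefore $dn-1$.

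\emph{Critical exponent.} By \cref{lem:cartan_projections_automorphism_group}(iv), $\mu'(\tau(\gamma))=r\cdot\diag(2,1,\dots,1,0,\dots,0,-1,\dots,-1,-2)$, where $r=d(\gamma o,o)$ and the entry $1$ has multiplicity $m=d(n-1)$ (I will double-check this multiplicity against the table, since the index conventions shift between sections). Reading off the roots:
\begin{itemize}
\item $\alpha_{1,j}(\mu')=r$ for $2\le j\le m+1$;
\item $\alpha_{1,j}(\mu')=2r$ on the zero block;
\item $\alpha_{1,j}(\mu')=3r$ on the $-1$ block;
\item $\alpha_{1,j}(\mu')=4r$ for the last entry.
\end{itemize}
Using the expression $F_s=\Psi^{\lfloor s\rfloor}_s$ from the proof of \cref{lem:functional_relations}, we get $F_s(\mu'(\tau(\gamma)))=c(s)\,r$, where $c$ is an explicit continuous, strictly increasing, piecewise linear function. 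Concretely, $c(s)=s$ for $s\le m$, and $c(s)=m+2(s-m)$ while $s-m$ does not exceed the size of the zero block, and so on.

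Hence $\sum_\gamma e^{-F_s(\mu'(\tau(\gamma)))}=\sum_\gamma e^{-c(s)d(\gamma o,o)}$. This series converges if and only if $c(s)$ is greater than the critical exponent of $\Gamma$ acting on $\mathbf H^n_\mathbb F$. Since $\Gamma$ is a uniform lattice, that critical exponent equals the volume entropy $h=dn+d-2$. Therefore $\delta_F(\tau)$ is the unique $s$ with $c(s)=dn+d-2$.

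\emph{Main obstacle.} The key step is to find which linear piece of $c$ contains this solution.
\begin{itemize}
\item On the first piece, $c(s)=s\le m=dn-d<dn+d-2$ whenever $d\ge 2$. For $d=1$ the solution is $s=n-1$ directly, since then $c(s)=s$ and $h=n-1$.
\item On the second piece, solving $dn-d+2(s-m)=dn+d-2$ gives $s=m+d-1=dn-1$. This is the claimed value.
\end{itemize}
It remains to check that $s=dn-1$ really lies in the second piece, i.e. that $d-1$ does not exceed the multiplicity of the zero weight, which is $d+\dim\mathfrak u(n-1,\mathbb F)-1\ge d-1$. The same check confirms that $\lfloor s\rfloor$ selects the right $\Psi^p$. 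This bookkeeping of multiplicities and normalizations is the part where errors are most likely: the Cartan projection normalization $r=d(\gamma o,o)$ must match the curvature normalization in $[-4,-1]$ used for $h$. I will verify these carefully, using the real case $d=1$ as a sanity check, where the statement reduces to Sullivan's theorem.
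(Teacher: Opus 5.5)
Your proposal is correct, but it takes a different route from the paper. The paper never computes $\delta_F(\tau)$ directly. It only evaluates the unstable Jacobian, $J^u_{dn-1}(\mu'(\tau(\gamma)))=(dn+d-2)\,d(\gamma\cdot o,o)$, concludes $\delta_{J^u_{dn-1}}(\tau)=\delta_\Gamma/h=1$, and then relies on the general machinery of \cref{sec:proof_strategy}. There, \cref{lem:functional_relations} gives $\delta_F(\tau)\le\max\{d_\rho,\,d_\rho\,\delta_{J^u_{d_\rho}}(\tau)\}=dn-1$, and the upper bound of \cite[Theorem 3.1]{pozzetti_anosov_2023} supplies $\dim_{\mathcal H}\xi^1(\partial_\infty\Gamma)\le\delta_F(\tau)$.

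You instead use the fact that every Cartan projection of $\tau(\Gamma)$ lies on the single ray $\mathbb R_{\ge 0}\,\tau_*(H_0)$. This makes the Poincar\'e series of $F_s$ collapse to the classical Poincar\'e series of $\Gamma$ at exponent $c(s)$. Continuity and strict monotonicity of $c$ then give $\delta_F(\tau)=c^{-1}(h)$ exactly.

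The numerical core of the two arguments is the same. At an integer value $s=p$ one has $F_p=\Psi^p_p=J^u_p$, so your equation $c(dn-1)=dn+d-2$ is literally the paper's identity for $J^u_{dn-1}$. What your version buys is self-containment. It proves both inequalities in this example without invoking the Pozzetti--Sambarino--Wienhard upper bound. It also makes explicit why the root lies on the second linear piece, through your check that $d-1$ does not exceed the multiplicity of the zero weight.

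The paper's route is shorter and deliberately mirrors the proof of \cref{thm:main}. It stresses that the bound (LB) still holds here, even though irreducibility of the Patterson--Sullivan measure fails.

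Your dimension argument is also more careful than the paper's one-line assertion: injectivity of the differential follows from $\dim[\mathfrak g,x_0]=dn$, which matches $\dim G/P+1$. Your normalization worry resolves as hoped. The computation in \cref{lem:cartan_projections_automorphism_group}(i) shows that $e^{tH_0}$ displaces $o$ by exactly $t$ for the distance formula \cref{eq:distance_hyperbolic_space}. That is the same curvature normalization in $[-4,-1]$ under which $h=dn+d-2$.
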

\begin{proof}
    For the second equality, it suffices to note that $\Gamma$ is a uniform lattice, so the limit set $\xi^1(\partial_\infty \Gamma)$ is a smooth sphere of dimension $\dim(\xi^1(\partial_\infty \Gamma)) = \dim(\partial_\infty \Gamma) = \dim_\mathbb R(\mathbf H^n_\mathbb F) - 1 = (\dim_\mathbb R \mathbb F) n - 1$.

    As we recall from \cref{sec:proof_strategy}, in \cite{pozzetti_anosov_2023}, the authors show that when the limit set is a Lipschitz sphere, the critical exponent of the Falconer functional will be equal to the Hausdorff dimension of the limit set if and only if the critical exponent of the unstable Jacobian $J^u_{d_\Gamma}$ is equal to $1$, where $d_\Gamma = \dim(\partial_\infty \Gamma)$.
    To show that this holds in our example, we evaluate the unstable Jacobian over $\mu'(\tau(\gamma))$.
    For the real case, we have 
    \[
    J^u_{n-1}(\mu'(\tau(\gamma))) = d(\gamma \cdot o, o) (n-1)(2-1) =(n-1) d(\gamma \cdot o, o).
    \]
    Hence $\delta_{J_{n-1}^u}(\tau) = \delta_\Gamma/(n-1) = 1$, where $\delta_\Gamma$ the classical critical exponent of $\Gamma$, as it appears in \cite{sullivan1979density}, i.e.\ 
    \begin{align*}
        \delta_\Gamma = \lim_{n \to \infty} \frac{\log \left( \sharp \left\{ \gamma \in \Gamma : d(\gamma \cdot o, o) \leq n \right\}\right)}{n}.
    \end{align*}
    For the complex case:
    \begin{align*}
        J^u_{2n-1}(\mu'(\tau(\gamma))) &= d(\gamma \cdot o, o) ( \alpha_{1,2} + \cdots + \alpha_{1,2n}) (\tau_*(H_0))\\
        &= 
        d(\gamma \cdot o, o) \cdot \left( (2n - 2) (2-1) + (2-0) \right)\\
        &= 2n\ d(\gamma \cdot o, o).
    \end{align*}
    so we have that
    \begin{align*}
        \delta_{J^u_{2n-1}}(\tau) = \frac{\delta_\Gamma}{2n} = 1.
    \end{align*}
    And finally, for the quaternionic case:
    \begin{align*}
        J^u_{4n-1}(\mu'(\tau(\gamma))) &= d(\gamma \cdot o, o) (\alpha_{1,2} + \cdots + \alpha_{1,4n}) (\tau_*(H_0))\\
        &=
        d(\gamma \cdot o, o) \cdot \left( (4n-4)(2-1) + 3(2-0) \right)\\
        &= (4n+2)\  d(\gamma \cdot o, o),
    \end{align*}
    implying that $\delta_{J_{4n-1}^u}(\tau) = 1$ as well.
\end{proof}

\section[\texorpdfstring{The action of $\mathrm{SO}(p,p)$ on maximal isotropic subspaces}{The action of SO(p,p) on maximal isotropic subspaces}]{The action of $\SO(p,p)$ on maximal isotropic subspaces}\label{sec:so_p_p_action}
While the representations of \cref{sec:adjoint_representations} already show that the result of \cite[Proposition 10.3]{labourie_anosov_2006} does not hold, in this section we give a more direct counterexample, which is interesting in its own right.
With it we show that one cannot always use the action of $\SO(p,p)$ to move a pair of maximal isotropic subspaces of $\mathbb R^{p,p}$ in transverse position.
This contradicts the claim of the proposition, because the action of the identity component $\SO_0(p,p)$ on $\mathbb R^{2p}$ is irreducible.

We recall that $\mathbb R^{p,p}$ is the vector space $\mathbb R^{2p}$ equipped with the symmetric bilinear form of signature $(p,p)$ given by
\[
\langle x, y \rangle_{p,p} = \sum_{i=1}^p x_i y_{i} - \sum_{i=1}^p x_{p+i} y_{p+i}.
\]
\labourie
\begin{proof}
    Prior to making any assumptions on the parity of $p$, we parametrize the set $S$ by $\mathrm{O}(p)$.
    \[\begin{array}{lll}
        &S \quad &\leftrightarrow \quad \mathrm{O}(p) \\
        &V \quad &\mapsto \quad \mathrm{pr}_2 \circ ({\mathrm{pr}_1}_{|V})^{-1} \\
        &\Gamma(\phi) \quad &\mapsfrom \quad \phi
    \end{array}\]
    where we denote with $\mathrm{pr}_1, \mathrm{pr}_2: \mathbb R^{p,p} \to \mathbb R^p$ the projections to the first and last $p$ coordinates, and $\Gamma(\phi) = \left\{ (x, \phi(x)) : x \in \mathbb R^p\right\}$ the graph of $\phi \in \mathrm{O}(p)$. 
    The left-to-right direction of this correspondence is well-defined because all $V \in S$ are $p$-dimensional and transverse to $\mathbb R^p \oplus 0$ and $0 \oplus \mathbb R^p$.
    Indeed, if we have ${\mathrm{pr}_1}_{|V}(x,y) = {\mathrm{pr}_1}_{|V}(x, y')$ for some $(x,y), (x,y') \in V$, then $(0, y-y') \in V \cap (0 \oplus \mathbb R^p)$, which implies $y = y'$.
    In particular, $S$ has two connected components, consisting of the graphs of elements of $\mathrm{O}(p)$ with positive and negative determinant respectively:
    \[
    S_\pm = \left\{ \Gamma(\phi) : \phi \in \mathrm{O}(p), \det \phi = \pm 1\right\}.
    \]

    Using the parametrisation above, we have a convenient way of checking whether two isotropic subspaces are transverse.
    Namely, for $\phi, \psi \in \mathrm{O}(p)$ we have that $\Gamma(\phi)$ and $\Gamma(\psi)$ are transverse if and only if $\phi \circ \psi^{-1}$ does not admit $1$ as an eigenvalue.
    However, because $\phi \circ \psi^{-1} \in \OO(p)$, depending on the parity of $p$, we may infer from the sign of its determinant that $1$ is an eigenvalue.
    denoting by $\sigma_-$ the (possibly 0) multiplicity of $-1$ as an eigenvalue of $\phi \circ \psi^{-1}$, we have that 
    \[
    \mathrm{det}(\phi) = (-1)^{\sigma_-} \cdot \mathrm{det}(\psi),
    \]
    so that
    \[
    \Gamma(\phi) \cap \Gamma(\psi) \neq 0 \text{ provided that } 
    \begin{cases}
        \det \phi = \det \psi & \text{ if } p \text{ is odd},\\
        \det \phi = -\det \psi & \text{ if } p \text{ is even}.
    \end{cases}
    \]

    Given this, the non-transversality of $gV$ and $W$ for every $g \in \SO(p,p)$ will follow as soon as we show that the action of $\SO(p,p)$ preserves the components of $S$.
    Being a Lie group with finitely many connected components, $\SO(p,p)$ deformation retracts to its maximal compact subgroup $S(\mathrm O (p) \times \mathrm O(p))$, which preserves the components of $S$ because its action is given by $(g, h) \Gamma(\phi) = \Gamma(h \phi g^{-1})$.
    Hence $\SO(p,p)$ preserves the components of $S$ as well.
\end{proof}
For a proof that a Lie group $G$ with finitely many components deformation retracts to its maximal compact subgroup $K$, we refer the reader to \cite[14.3.11]{hilgert2011structure}.
But in the case of a semisimple Lie group like $\SO(p,p)$, one can also see this directly from the Cartan decomposition $G \simeq K \times \mathfrak p$.

\printbibliography
\end{document}

%% file: packages.tex
\usepackage[T1]{fontenc}
\usepackage[utf8]{inputenc}

\usepackage[a4paper]{geometry}
\usepackage{graphicx}
\usepackage{subcaption}

\usepackage{amssymb,amsthm, amsmath}

\usepackage[
natbib,
style=alphabetic,
maxbibnames=10,  
sorting=nyt  ,
url=false,
doi=false,
sortcites,
defernumbers,
backref,
backend=biber
]{biblatex}
\usepackage{hyperref}

\usepackage[marginpar]{todo}

\usepackage{url}
\usepackage{tikz-cd}
\usepackage[nameinlink, capitalise, noabbrev]{cleveref}

\usepackage{xfrac}
\usepackage{nicefrac}

\usepackage{soul}

\usepackage{bbm}

\usepackage{enumitem}

\usepackage{stmaryrd}

\usepackage{thmtools, thm-restate}

\usepackage{mathdots}

\crefname{assumption}{Assumption}{Assumptions}

%% file: commands.tex
\newtheorem{theorem}{Theorem}[section]

\newtheorem{proposition}[theorem]{Proposition}

\newtheorem{vagueconjecture}{Vague Conjecture}
\newtheorem{lemma}[theorem]{Lemma}
\newtheorem{corollary}[theorem]{Corollary}
\theoremstyle{remark}
\newtheorem{remark}[theorem]{Remark}

\theoremstyle{definition}

\newtheorem{definition}[theorem]{Definition}

\newtheorem{fact}[theorem]{Fact}
\newtheorem*{setting}{Setting}

\crefname{cond}{Condition}{Conditions}
\crefformat{cond}{Condition~(#2#1#3)}
\crefname{ineq}{Inequality}{Inequalities}
\crefformat{ineq}{Inequality~(#2#1#3)}

\renewcommand{\d}{\,\mathrm{d}}

\newcommand{\defeq}{:=}

\DeclareMathOperator{\diag}{diag}

\DeclareMathOperator{\OO}{{\mathrm{O}}}
\DeclareMathOperator{\SL}{{\mathrm{SL}}}
\DeclareMathOperator{\ssl}{{\mathfrak{sl}}}

\DeclareMathOperator{\PSL}{{\mathrm{PSL}}}
\DeclareMathOperator{\GL}{{\mathrm{GL}}}
\DeclareMathOperator{\gl}{{\mathfrak{gl}}}
\DeclareMathOperator{\SO}{{\mathrm{SO}}}

\DeclareMathOperator{\PSO}{{\mathrm{PSO}}}

\DeclareMathOperator{\Ad}{Ad}
\DeclareMathOperator{\ad}{ad}
\DeclareMathOperator{\Aut}{Aut}

\DeclareMathOperator{\supp}{supp}

\DeclareMathOperator{\tr}{tr}

\newcommand{\equaldef}{\overset{\mathrm{def}}{=}}
